\theoremstyle{definition}
\newtheorem{theorem}{Theorem}[section]
\newtheorem{lemma}[theorem]{Lemma}
\newtheorem{definition}[theorem]{Definition}
\theoremstyle{definition}
\newtheorem*{thm*}{Theorem}
\newtheorem{example}[theorem]{Example}
\newcommand{\N}{\mathbb{N}}
\newcommand{\Z}{\mathbb{Z}}
\newcommand{\Q}{\mathbb{Q}}
\renewcommand{\a}{\alpha}
\renewcommand{\b}{\beta}
\newcommand{\g}{\gamma}
\renewcommand{\O}{\Omega}
\newcommand{\paren}[1]{\left( #1 \right)}
\newcommand{\set}[1]{\left\{ #1 \right\}}
\newcommand{\abs}[1]{\left| #1 \right|}
\DeclareMathOperator{\dep}{depth}
\DeclareMathOperator{\Min}{Min}
\DeclareMathOperator{\Spec}{Spec}
\DeclareMathOperator{\Ass}{Ass}
\DeclareMathOperator{\height}{ht}
\DeclareMathOperator{\rAss}{rAss}
\newcommand{\union}{\cup}
\newcommand{\inter}{\cap}
\newcommand{\bigunion}{\bigcup}
\newcommand{\biginter}{\bigcap}
\newcommand{\sub}{\subseteq}
\newcommand{\subn}{\subsetneq}
\newcommand{\nsub}{\nsubseteq}
\newcommand{\iso}{\cong}
\newcommand{\todo}[1]{\PackageWarning{To Do}{#1}}
\newcommand{\note}[1]{\textcolor{red}{\textbf{#1}}\todo{#1}}
\newcommand{\hide}[1]{}
\begin{document}

\title[Extremely Noncatenary Noetherian UFDs]{Completions of Extremely Noncatenary Noetherian UFDs}
\author{Eli B. Dugan and S. Loepp}

%\maketitle

\begin{abstract}
Let $T$ be a complete local ring. We present necessary and sufficient conditions for $T$ to be the completion of a local (Noetherian) unique factorization domain $A$ such that there exist height one prime ideals $\{J_k\}_{k = 1}^{\infty}$ of $A$ satisfying the following conditions:
(1) $J_k = J_{\ell}$ if and only if $k = \ell$, 
(2) there exist positive integers $n \neq m$ such that for each $k \in \N$, there are two saturated chains of prime ideals of $A$ of the form    
    $J_k \subn J^{(1)}_{k,2} \subn \cdots \subn J^{(1)}_{k,n - 1} \subn M$ and $J_k \subn J^{(2)}_{k,2} \subn \cdots \subn J^{(2)}_{k,m - 1} \subn M,$ where $M$ is the maximal ideal of $A$, and 
(3) the prime ideals from condition (2) satisfy $J^{(i)}_{k,a} = J^{(j)}_{\ell,b}$ if and only if $i = j$, $k = \ell$, and $a = b$. We also find sufficient conditions for $T$ to be the completion of a local (Noetherian) unique factorization domain $B$ such that $B/J$ is not catenary for all height one prime ideals $J$ of $B$.
\end{abstract}

\maketitle

\section{Introduction}

In commutative algebra, the structure of commutative rings is a main focus, and understanding the prime ideals of a commutative ring leads to a better understanding of the ring. The set of prime ideals of a ring $R$ is called the prime spectrum of $R$, and is denoted by $\Spec(R)$. Note that $\Spec(R)$ is a partially ordered set (poset) with respect to inclusion. 
With this view, it is natural to ask when a given poset is isomorphic to the prime spectrum of some commutative ring. 
%More generally, we might ask whether a given poset embeds into the prime spectrum of some commutative ring. This question can be made more challenging by increasing the complexity of the poset in question, or by restricting the space of rings that we are considering. A useful way to frame this work is as a search for evidence of increasingly unusual behavior in the prime spectra of rings, and as we find such evidence, we focus our search by considering families of rings that are, at least in principle, increasingly well-behaved.
%
%We now begin an overview of the results that influence our view of the problem; see \cite{Wiegand_survey} for a survey of a similar topic. 
In \cite{Lewis}, Lewis showed that for any finite poset $X$, there exists a commutative ring whose prime spectrum is poset isomorphic to $X$. As a consequence, a commutative ring can have an unusual prime ideal structure.  For example, recall that a ring is called catenary if, for every pair of prime ideals $P \subseteq Q$ of the ring, all saturated chains of prime ideals starting at $P$ and ending at $Q$ have the same length. A ring that is not catenary is said to be noncatenary. A consequence of Lewis' result is that commutative rings can be arbitrarily noncatenary. However, one cannot use the same reasoning to show that Noetherian rings can be noncatenary since a Noetherian ring having Krull dimension at least two must have infinitely many prime ideals.

%Thus, an important refinement to our initial motivating questions is to exclusively consider Noetherian rings. This paper fits into the tradition that enforces the Noetherian assumption. In practice, most attention is given to local domains (recall that for us, a local ring is always Noetherian).

%While the degree to which the behavior of a Noetherian ring is unusual can be defined in many ways, one common approach is to investigate how noncatenary such a ring can be. 

For a time, it was thought that all Noetherian domains are catenary, but in 1956 Nagata constructed a family of noncatenary Noetherian domains \cite{Nagata}. Still, this left open just how noncatenary a Noetherian ring could be. Heitmann made a pivotal step towards answering this in 1979, when he showed in \cite{Heitmann_noncatenary} that every finite poset can be embedded into the prime spectrum of some Noetherian domain in a way that preserves saturated chains.

With the understanding that Noetherian rings could be noncatenary, attention turned to specific types of Noetherian domains. In 1980, Ogoma showed in \cite{Ogoma} the existence of a noncatenary Noetherian integrally closed domain. Then, the existence of a noncatenary Noetherian unique factorization (UFD) was shown by Heitmann in 1993 (see Theorem 10 in \cite{Heitmann_UFDs}). 
%Heitmann accomplished this by leveraging a result (Theorem 8, ibid.) that begins with a complete local ring $T$ (satisfying some conditions) and yields a local UFD whose completion is $T$ and has certain useful properties; the noncatenary local UFD he constructed is a modified version of the one attained in this way.
%
%Heitmann's 1993 paper came in the wake of a paper from Lech in 1986, which gave necessary and sufficient conditions for when a ring is the completion of a local domain \cite{Lech}. 
This result was furthered by a theorem of Avery et al. in 2019 \cite{SMALL_2017} that gives necessary and sufficient conditions for a complete local (Noetherian) ring to be the completion of a noncatenary local UFD. A consequence of the theorem is that, for integers $m$ and $n$ with $2 < m < n$, there exists a local (Noetherian) UFD with saturated chains of length $m$ and $n$ from the zero ideal to the maximal ideal. That is, their methods allowed great precision in mirroring particular noncatenary behavior in the prime spectrum of a Noetherian UFD.
%
%Shortly afterwards, Loepp and Semendinger extended this result and showed how to construct a local UFD with arbitrarily many finite saturated chains of distinct pre-specified lengths from the $(0)$ ideal to the maximal ideal \cite{Loepp_Semendinger}. 
This work was further generalized in \cite{Colbert_Loepp}, where it is shown that every finite poset can be embedded into the prime spectrum of a Noetherian UFD in a way that preserves saturated chains.

Following this, a natural place to turn is to investigate the extent to which infinite posets can be embedded into the prime spectra of Noetherian UFDs in a way that preserves saturated chains. In \cite{Bonat_UFDs}, the authors construct Noetherian UFDs that exhibit novel ``infinitely noncatenary" behavior. 
To further explain the results in \cite{Bonat_UFDs}, we introduce two definitions that capture  ``infinitely noncatenary" behavior in two different ways. We use $\N$ to denote the set of positive integers.

\begin{definition}\label{infinitely1noncatenary}
We say that a local ring $A$ with maximal ideal $M$ is {\em infinitely 1-noncatenary} if there exist height one prime ideals $\{J_k\}_{k = 1}^{\infty}$ of $A$ satisfying the following conditions:

\begin{enumerate}
    \item $J_k = J_{\ell}$ if and only if $k = \ell$, 
    \item there exist positive integers $n \neq m$ such that for each $k \in \N$, there are two saturated chains of prime ideals of $A$ 
    
    $$J_k \subn J^{(1)}_{k,2} \subn \cdots \subn J^{(1)}_{k,n - 1} \subn M$$ $$J_k \subn J^{(2)}_{k,2} \subn \cdots \subn J^{(2)}_{k,m - 1} \subn M, \mbox{ and}$$ \label{twochainsdifferentlengths}
    
    \item the prime ideals from condition (\ref{twochainsdifferentlengths}) satisfy $J^{(i)}_{k,a} = J^{(j)}_{\ell,b}$ if and only if $i = j$, $k = \ell$, and $a = b$.
\end{enumerate}
\end{definition}

If a local ring $A$ is infinitely 1-noncatenary, then there are infinitely many height one prime ideals $\{J_k\}_{k = 1}^{\infty}$ of $A$ for which there are saturated chains of prime ideals of different lengths, one of length $n$ and one of length $m$, starting at $J_k$ and ending at the maximal ideal of $A$. Moreover, all of these chains are disjoint except at the height one prime ideals (each $J_k$ is part of exactly two of the chains) and the maximal ideal.
Note that a consequence of $A$ being infinitely 1-noncatenary is that $A/J_k$ is noncatenary for every $k \in \N$.

\begin{definition}\label{everywhere1noncatenary}
We say that a ring $A$ is {\em everywhere 1-noncatenary} if $A/J$ is noncatenary for all height one prime ideals $J$ of $A$.
\end{definition}

In \cite{Bonat_UFDs}, the authors identify a set of sufficient conditions for a complete local ring to be the completion of an infinitely 1-noncatenary local UFD. As a consequence, they show that there exists an infinite family of infinitely 1-noncatenary local UFDs. Furthermore, they construct an example of an everywhere 1-noncatenary local UFD.

In this paper, we expand on the results in \cite{Bonat_UFDs} to increase our understanding of local UFDs that are, in a sense, infinitely noncatenary. 
In the first main result of this paper, Theorem \ref{thm: main infinitely}, we find necessary and sufficient conditions for a complete local ring to be the completion of an infinitely 1-noncatenary local UFD. What is perhaps most surprising about this result is that these are precisely the same conditions as those that were found in \cite{SMALL_2017} to be necessary and sufficient for a complete local ring to be the completion of a noncatenary local UFD. In our second main result, Theorem \ref{thm: main everywhere}, we find sufficient conditions for a complete local ring to be the completion of an everywhere 1-noncatenary local UFD. A consequence of this theorem is that, for $n \geq 4$, there exists an everywhere 1-noncatenary local UFD of dimension $n$ (see Example \ref{example}).

%each choose one of these notions and aims to broadly describe when they occur. These two results have much in common: they begin with a complete local ring $(T, M)$ satisfying certain conditions, and they work towards building up a special subring $A$ of $T$, a local UFD whose completion is $T$. Along the way, special prime ideals of $T$ are identified, and the intersection of these ideals with $A$ end up forming the noncatenary structure we seek.

To prove both of our main results, we appeal to Lemma \ref{lem: noncatenary-proving machine}, in which we lay out sufficient conditions for there to exist two saturated chains of prime ideals of different lengths from a height one prime ideal of a local ring to its maximal ideal.
%For the proofs of our two main results, we work directly towards being able to use Lemma \ref{lem: noncatenary-proving machine}. 
To prove Theorem \ref{thm: main infinitely}, we apply Lemma \ref{lem: noncatenary-proving machine} to an infinite family of height one prime ideals of our constructed UFD, while to prove Theorem \ref{thm: main everywhere}, we apply Lemma \ref{lem: noncatenary-proving machine} to every height one prime ideal.

In Section \ref{Preliminaries}, we present preliminary results and definitions.  In Section \ref{sec: infinitely} we state and prove our first main result, Theorem \ref{thm: main infinitely}, while in Section \ref{sec: everywhere}, we state and prove our second main result, Theorem \ref{thm: main everywhere}.

\medskip

\noindent {\bf Notation and Conventions}. All rings in this paper are assumed to be commutative with unity. When we say that a ring is quasi-local, we mean that it has exactly one maximal ideal and when we say that a ring is local, we mean that it is quasi-local and Noetherian. We use the notation $(A,M)$ to denote a quasi-local ring $A$ whose maximal ideal is $M$.
If $(A,M)$ is a local ring, then $\widehat{A}$ denotes the $M$-adic completion of $A$. 
We use $\Min(T)$ to denote the set of minimal prime ideals of a ring $T$.
Finally, if $P$ is a prime ideal of the ring $T$, then the coheight of $P$ is defined to be dim$(T/P)$.

\section{Preliminaries}\label{Preliminaries}

In this section, we present relevant results and definitions from the recent literature. We start by stating the main result from \cite{Heitmann_UFDs}, in which necessary and sufficient conditions are found for a complete local ring to be the completion of a local UFD.

%As we are searching for similar conditions as they pertain to certain families of local domains, and so in particular we know that both of these conditions will be necessary for us.

%\begin{theorem}[\cite{Lech}, Theorem 1] \label{thm: lech}
%Let $(T,M)$ be a complete local ring. Then $T$ is the completion of a local domain if and only if the following conditions hold:
%\begin{enumerate}
%    \item No integer of $T$ is a zero divisor, and
%    \item Either $M = (0)$ or $\dep(T) \geq 1$.
%\end{enumerate}
%\end{theorem}

\begin{theorem}[\cite{Heitmann_UFDs}, Theorem 1 and Theorem 8] \label{HeitmannUFDTheorem}
A complete local ring is the completion of a local UFD if and only if it is a field, a discrete valuation ring, or it has depth at least two and no element of its prime subring is a zerodivisor.
\end{theorem}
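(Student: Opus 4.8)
\emph{Necessity.} Suppose $A$ is a local UFD with $\widehat{A}\cong T$; I would use that completion preserves Krull dimension and depth and that $A\to T$ is faithfully flat, hence injective. If $\dim A=0$, then $A$ is an Artinian local domain, so a field, and $T=\widehat{A}=A$ is a field. If $\dim A=1$, then $A$ is a one-dimensional Noetherian local domain that is integrally closed (every UFD is integrally closed), hence a DVR, and so is its completion $T$. If $\dim A\ge 2$, then $A$, being integrally closed, satisfies Serre's condition $S_2$, so $\dep A\ge\min(2,\dim A)=2$, whence $\dep T\ge 2$; moreover, for any nonzero element $c$ of the prime subring of $T$ (which lies in $A$), multiplication by $c$ is injective on the domain $A$, hence injective on $T$ by flatness, so $c$ is a nonzerodivisor in $T$. (If the prime subring is a field, this is automatic.) Thus $T$ is one of the three listed types.

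\emph{Sufficiency, trivial cases.} If $T$ is a field or a DVR it is already a PID, hence a UFD, and equals its own completion, so $A=T$ works.

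\emph{Sufficiency, main case.} Assume $\dep T\ge 2$ and no nonzero element of the prime subring of $T$ is a zerodivisor; in particular $\dim T\ge 2$. I would construct a quasi-local domain $A$ with $A\subseteq T$, with $\mathfrak{m}_T\cap A$ its maximal ideal, with $\widehat{A}\cong T$, and with every height-one prime ideal principal; the classical characterization of Noetherian UFDs --- a Noetherian domain is a UFD exactly when each of its height-one primes is principal --- then makes $A$ a local UFD with completion $T$. The construction is a transfinite recursion building $A$ as the union of an increasing chain of \emph{admissible} subrings of $T$, where I call a subring $R$ of $T$ admissible if $R$ is a domain, $(R,\mathfrak{m}_T\cap R)$ is quasi-local, $|R|\le\sup(\aleph_0,|T/\mathfrak{m}_T|)$, and $R\cap Q=(0)$ for every associated prime $Q$ of $T$. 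The hypothesis on zerodivisors is exactly what makes $R_0:=$ (the prime subring of $T$, localized at its intersection with $\mathfrak{m}_T$) admissible, since the zerodivisors of $T$ are the union of its associated primes. Beginning with $R_0$, one enlarges the subring at each successor stage to accomplish one task from an interleaved, transfinitely recycled list: on the one hand, for each $t\in T$ force the next subring to contain an element congruent to $t$ modulo $\mathfrak{m}_T^2$, together with the ``colon'' closure tasks that force $c\in IR_{\alpha+1}$ whenever $I$ is a finitely generated ideal of $R_\alpha$ and $c\in IT\cap R_\alpha$ --- in the limit these make $A\to T$ flat with $\mathfrak{m}_T=\mathfrak{m}_A T$ --- and, on the other hand, tasks designed to equip every height-one prime of the eventual ring $A$ with a generator, carried out by principalizing traces $Q\cap R_{\alpha+1}$ for appropriate primes $Q$ of $T$. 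The standard enlargement lemmas of this subject show that each of these steps can be performed within the admissible class, the key technical ingredient being prime avoidance against the finitely many relevant associated and ``bad'' primes of $T$. At the limit, admissibility makes $A=\bigcup_\alpha R_\alpha$ a domain inside $T$ with $\mathfrak{m}_T\cap A$ maximal, and the standard completion lemma --- a flat local extension whose maximal ideal generates that of the complete ring and whose closed fiber is Artinian (here trivial) has that ring as its completion --- shows $A$ is Noetherian with $\widehat{A}\cong T$; by construction every height-one prime of $A$ is principal.

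\emph{Main obstacle.} The delicate point is the bookkeeping reconciling the two families of tasks: principalizing a prime adjoins new elements, hence new finitely generated ideals and possibly new primes, spawning fresh tasks of both kinds, so the recursion must be organized to close off with \emph{every} height-one prime of the final ring principal and not merely those seen at some bounded stage --- complicated further by the fact that the $R_\alpha$ are only quasi-local subrings, not Noetherian rings, so heights can behave unexpectedly along the chain. Carrying this out demands enough control of the formal fibers of $A$ that the height-one primes of $A$ are exactly the ones targeted by the construction, and it is precisely here that $\dep T\ge 2$ is essential: it supplies the primes of $T$ that the principalizing step needs and makes the prime-avoidance choices possible at every stage, while the prime-subring hypothesis is what keeps every $R_\alpha$, hence $A$, a domain with the correct completion.
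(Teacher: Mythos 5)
This theorem is not proved in the paper at all; it is quoted verbatim from Heitmann \cite{Heitmann_UFDs} (Theorems 1 and 8), so there is no ``paper's own proof'' to compare against. I therefore assess your proposal against Heitmann's argument, which the paper leans on through Definition~\ref{def: N-subring}, Lemma~\ref{lem: heitmann lemma 6}, and Theorem~\ref{thm: completion-proving machine}.

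Your necessity argument is correct and complete: the case split on $\dim A$, the DVR characterization of one-dimensional integrally closed local domains, the use of Serre's $S_2$ condition to get $\dep A \geq 2$ when $\dim A \geq 2$, the fact that completion preserves depth, and the flatness argument showing nonzero elements of the prime subring stay regular in $T$, all fit together without gaps.

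Your sufficiency sketch reproduces the right architecture (prime subring as base, transfinite chain, interleaved tasks to force surjectivity onto $T/M^2$ and the ideal-contraction property, then the completion criterion of Theorem~\ref{thm: completion-proving machine}), but you deviate from Heitmann on the point you yourself flag as the ``main obstacle.'' Your ``admissible'' class asks only that the stages be quasi-local domains with the cardinality bound and $R\cap Q=(0)$ for $Q\in\Ass(T)$, and you then try to enforce the UFD property at the end by separately ``principalizing'' height-one primes of the limit ring $A$. Heitmann avoids that entire bookkeeping problem. His $N$-subrings (Definition~\ref{def: N-subring}) are required to be UFDs at every stage and must additionally satisfy the condition that $\height(P\cap R)\leq 1$ for every $P\in\Ass(T/tT)$ with $t$ regular; his $A$-extensions further require that prime elements of $R$ stay prime in the larger ring. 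With those conditions built in, Lemma~\ref{lem: heitmann lemma 6} shows the union of the chain is automatically a UFD: every nonzero nonunit of $A$ lies in some $R_\alpha$, factors there into primes, and those factors remain prime in $A$. No separate principalization task is run, and no enumeration of the (not-yet-visible) height-one primes of $A$ is needed. If you replace ``admissible'' by ``$N$-subring,'' add the height condition, and require prime elements to stay prime at each extension, the obstacle you identified disappears and your sketch collapses onto Heitmann's proof. As written, the gap is that you have not explained how the recursion can be guaranteed to catch every height-one prime of the final ring, and Heitmann's machinery exists precisely so that one does not have to.
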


Note that by Theorem \ref{HeitmannUFDTheorem}, if a complete local ring $T$ has Krull dimension at least two, then $T$ is the completion of a local UFD if and only if $T$ has depth at least two and no element of its prime subring is a zerodivisor. The complete local rings we consider have Krull dimension at least three.  Thus, for one of our complete local rings to be the completion of a local UFD, it is necessary that it has depth at least two and satisfies the condition that no element of its prime subring is a zerodivisor. 

We next state two foundational results for much of our work. Both can be viewed as generalizations of the prime avoidance lemma.

\begin{lemma}[\cite{Heitmann_UFDs}, Lemma 2] \label{lem: heitmann lemma 2}
Let $(T, M)$ be a complete local ring, let $C \sub \Spec(T)$ be a countable set of nonmaximal prime ideals, and let $D \sub T$ be a countable set of elements. If $I \in \Spec(T)$ is contained in no single $P \in C$, then $I \nsub \bigunion \set{r + P \mid P \in C, r \in D}$.
\end{lemma}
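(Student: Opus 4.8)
The plan is to use the completeness of $T$ to build, by successive approximation, a single element of $I$ lying outside every coset $r+P$. First I would dispose of degeneracies: if $C=\nullset$ or $D=\nullset$ the right-hand side is empty and $I\nsub\nullset$ trivially, so assume both are nonempty; then the hypothesis that $I$ is contained in no single $P\in C$ already forces $I\neq(0)$, since $(0)\sub P$ for every $P\in C$. Because $T$ is a complete Noetherian local ring, Krull's intersection theorem gives $\biginter_n M^n=(0)$, so the $M$-adic metric on $T$ is well defined and Hausdorff; applied in $T/J$ it also gives $\biginter_n(J+M^n)=J$ for every ideal $J$ of $T$, so $I$, every $P\in C$, and every coset $r+P$ are closed in $T$, and $I$ (being closed in the complete space $T$) is itself complete. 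Finally, enumerate the countable set of pairs as $(P_1,r_1),(P_2,r_2),\dots$, repeating entries if the set is finite.

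I would then construct $x_0=0, x_1, x_2,\dots\in I$ and integers $0=m_0<m_1<m_2<\cdots$ so that for each $j\ge 1$ one has $x_j-x_{j-1}\in M^{m_{j-1}}\inter I$ and $x_j-r_i\notin P_i+M^{m_j}$ for all $i\le j$. The conditions for indices $i\le j-1$ survive each new step for free: $P_i+M^{m_{j-1}}$ is an additive subgroup containing $M^{m_{j-1}}$, which in turn contains $M^{m_j}$, so adding $x_j-x_{j-1}\in M^{m_{j-1}}$ to an element $x_{j-1}-r_i\notin P_i+M^{m_{j-1}}$ keeps us outside $P_i+M^{m_{j-1}}\supseteq P_i+M^{m_j}$. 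So only the index $i=j$ needs attention. If $x_{j-1}-r_j\notin P_j$, set $x_j=x_{j-1}$ and use that $P_j=\biginter_n(P_j+M^n)$ is closed to choose $m_j>m_{j-1}$ with $x_{j-1}-r_j\notin P_j+M^{m_j}$. If instead $x_{j-1}-r_j\in P_j$, set $x_j=x_{j-1}+y$ for a suitable $y\in M^{m_{j-1}}\inter I$ with $y\notin P_j+M^{m_j}$. The crux is that such a $y$ exists, i.e.\ that $M^{m_{j-1}}\inter I\nsub P_j$: choosing $a\in I\setminus P_j$ (possible since $I\nsub P_j$), we have $aM^{m_{j-1}}\sub I\inter M^{m_{j-1}}$, and $aM^{m_{j-1}}\nsub P_j$, for otherwise $M^{m_{j-1}}\sub P_j$ (as $P_j$ is prime and $a\notin P_j$), hence $M\sub P_j$, contradicting that $P_j$ is nonmaximal; then, $P_j$ being closed, pick $m_j>m_{j-1}$ with $y\notin P_j+M^{m_j}$. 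Once the sequence is built, $m_{j-1}\to\infty$ makes $(x_j)$ Cauchy, so it converges to some $x\in I$; writing $x-x_i=\sum_{k>i}(x_k-x_{k-1})$ as a limit of partial sums $x_N-x_i$, each lying in the closed set $M^{m_i}$, we get $x-x_i\in M^{m_i}$, and hence $x-r_i\notin P_i$ for every $i$ — for if $x-r_i\in P_i$ then $x_i-r_i=(x-r_i)-(x-x_i)\in P_i+M^{m_i}$, a contradiction. Thus $x\in I$ and $x\notin r+P$ for all $P\in C,\ r\in D$, which is exactly $I\nsub\bigunion\set{r+P\mid P\in C,\ r\in D}$.

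Conceptually this is just the Baire category theorem applied to the nonempty complete metric space $I$: each $I\inter(r+P)$ is closed, and it is nowhere dense in $I$ because every basic open set $(x_0+M^n)\inter I$ with $x_0\in I$ meets the complement of $r+P$ — either $x_0\notin r+P$, or else $x_0-r\in P$ and, for any $y\in(M^n\inter I)\setminus P$ (such $y$ existing by the computation above), $x_0+y\notin r+P$ — so a countable union of these sets cannot exhaust $I$. I expect the only genuine obstacle to be this nowhere‑density step: it is precisely where both hypotheses are used in an essential way — that $I$ lies in no $P\in C$, and that each $P\in C$ is nonmaximal — and it relies fundamentally on completeness, without which the conclusion is false for general Noetherian local rings.
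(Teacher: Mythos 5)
Your proof is correct, and it is essentially the argument given in the cited source (Heitmann, Trans.\ AMS 337 (1993), Lemma 2): enumerate the countably many cosets, build a Cauchy sequence inside $I$ by perturbing within $M^{m_{j-1}}\cap I$ to escape $P_j + M^{m_j}$ (using that $M^{m_{j-1}}\cap I\nsub P_j$ because $P_j$ is prime, nonmaximal, and does not contain $I$), and take the limit in the closed ideal $I$. The Baire-category gloss at the end is a nice conceptual restatement of the same mechanism but is not a genuinely different route.
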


\begin{lemma}[\cite{Heitmann_UFDs}, Lemma 3] \label{lem: heitmann lemma 3}
Let $(T, M)$ be a local ring, let $C \sub \Spec(T)$ be a set of prime ideals of $T$, and let $D \sub T$ be a set of elements of $T$. Suppose further that $\abs{C \times D} < \abs{T/M}$. If $I \in \Spec(T)$ is contained in no single $P \in C$, then $I \nsub \bigunion \set{r + P \mid P \in C, r \in D}$.
\end{lemma}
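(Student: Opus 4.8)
I would argue by contradiction: suppose $I \subseteq \bigcup\{r + P : P \in C,\ r \in D\}$, and aim to produce an element of $I$ lying in none of the cosets $r+P$. First, some cleanup: pairs $(P,r)$ with $(r+P)\cap I = \emptyset$ can be discarded, and $(r+P)\cap I$ is never all of $I$ (otherwise $I\subseteq r+P$, and $0\in I$ forces $r\in P$, so $r+P=P$ and $I\subseteq P$, contrary to hypothesis). The estimate I would then establish is: for each $P\in C$, since $I\nsubseteq P$ the image $(I+P)/P$ is a \emph{nonzero} ideal of the domain $T/P$, and a nonzero ideal $\mathfrak a$ of a domain $R$ satisfies $|\mathfrak a|=|R|$ (multiplication by a nonzero $a\in\mathfrak a$ is an $R$-module isomorphism $R\to aR$, so $|R|=|aR|\le|\mathfrak a|\le|R|$); since also $|T/P|\ge|T/M|$ (as $P\subseteq M$), we get $|I/(I\cap P)| = |(I+P)/P| = |T/P| \ge |T/M| > |C\times D| \ge |D|$. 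Thus $I\cap P$ has strictly more than $|D|$ cosets in $I$, while at most $|D|$ of them can equal $(r+P)\cap I$ for some $r\in D$.

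Next I would construct the desired element by stripping the primes of $C$ away one at a time — noting that the statement, and hence this induction, makes sense verbatim for an arbitrary ideal $I$ contained in no single $P\in C$, not merely for prime $I$. Pick $P_0\in C$ maximal under inclusion among the primes occurring in the cover. By the estimate, there is $\overline{x}_0\in(I+P_0)/P_0$ with $\overline{x}_0\ne\overline r$ for every $r\in D$; lift it to $x_0\in I$. Then $x_0+(I\cap P_0)\subseteq I$ meets no coset $r+P_0$, so it remains to find $x$ in this coset missing the cosets $r+P$ with $P\in C\setminus\{P_0\}$; writing $x=x_0+p$ with $p\in I\cap P_0$, this is the very same assertion for the ideal $I\cap P_0$, the prime set $C\setminus\{P_0\}$, and the elements $\{r-x_0:r\in D\}$. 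The hypothesis descends because, for $P\in C\setminus\{P_0\}$, $I\cap P_0\subseteq P$ would give $I\cdot P_0\subseteq I\cap P_0\subseteq P$, so primality of $P$ forces $I\subseteq P$ (excluded) or $P_0\subseteq P$ (impossible by maximality of $P_0$ in $C$). Iterating exhausts $C$; when no primes remain the empty union is avoided by any element of the shrunken ideal, and $x$ emerges as the accumulated sum of the lifts produced along the way.

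I expect the main obstacle to be the bookkeeping of this iteration in the general case, where $C$ may be uncountable: it must be organized as a transfinite recursion along a well-ordering of $C$ in which each prime is maximal among those not yet removed — available whenever $C$ has no infinite strictly ascending chain, in particular for the complete rings of finite Krull dimension to which the lemma is applied here — and at limit stages one must check that the accumulated constraints still carve out a nonempty coset of a suitable subideal of $I$. This is exactly where the hypothesis $|C\times D|<|T/M|$ is used: at every stage the quotient domain in play has at least $|T/M|$ elements, leaving room to dodge the at most $|D|$ forbidden residues. Granting this, the constructed $x$ lies in $I$ but in none of the cosets $r+P$, which is the contradiction.
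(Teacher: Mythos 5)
Your opening cardinality estimate is correct and is the crux of any proof: for $P\in C$ with $I\nsubseteq P$, the image $(I+P)/P$ is a nonzero ideal of the domain $T/P$, so $|(I+P)/P|=|T/P|\ge |T/M| > |C\times D|$. The genuine gap is in the transfinite ``strip one prime at a time'' recursion, and you flag it yourself without closing it. At a limit ordinal $\lambda$ you must produce a single $x$ with $x\equiv x_\alpha\pmod{J_\alpha}$ for all $\alpha<\lambda$, where $J_\alpha = I\cap\bigcap_{\beta<\alpha}P_\beta$; equivalently, the nested cosets $x_\alpha+J_\alpha$ must have nonempty intersection. There is no reason for this: a strictly descending chain of ideals with compatible coset representatives can have empty intersection unless the ring is complete along that chain, and the $J_\alpha$ need not be cofinal with the $M$-adic filtration even when $T$ is complete. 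In parallel, your ``hypothesis descends'' step ($I\cdot P_0\subseteq I\cap P_0$, then primality of $P$) is a finite-product argument with no analogue at limit stages, where one would have to exclude $I\cap\bigcap_{\beta<\lambda}P_\beta\subseteq P_\lambda$ with no infinite product of ideals to invoke. So as written the argument covers only finite $C$, whereas the paper applies the lemma with $C\supseteq\rAss^{(R)}(T)$, which is typically infinite.

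For comparison (the present paper only cites the lemma to Heitmann and gives no proof): the proof there sidesteps the recursion by exploiting a different, genuinely available finiteness. Since $T$ is Noetherian, $I=(a_1,\dots,a_n)$ is finitely generated, and one inducts on $n$. If $n\ge 2$ then, for each $P\in C$ containing $a_1,\dots,a_{n-2}$, at least one of $a_{n-1},a_n$ lies outside $P$, so among a transversal $U$ of $T/M$ at most one $u$ puts $a_{n-1}+ua_n$ into $P$; since $|C|\le|C\times D|<|T/M|=|U|$ (the cases $C=\varnothing$ or $D=\varnothing$ being vacuous), some $u$ works for all such $P$ at once, and $(a_1,\dots,a_{n-2},a_{n-1}+ua_n)\subseteq I$ is still not contained in any $P\in C$. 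After $n-1$ such steps one is reduced to a principal ideal $(a)$ with $a\notin P$ for every $P\in C$, and a single further pass of the same counting produces $u\in U$ with $ua\in I$ outside every coset $r+P$. Your observation that primality of $I$ is never really used is correct for this route as well; the finiteness that drives the proof is the number of generators of $I$, not a well-ordering of $C$.
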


Another fundamental result on which we rely is the following proposition from \cite{Heitmann_locals}. It provides sufficient conditions for a subring of a given complete local ring to be Noetherian and to have its completion be isomorphic to the given complete local ring. Recall that, given a complete local ring $T$, we wish to find a subring $A$ of $T$ such that the completion of $A$ is $T$ and such that $A$ is a UFD that satisfies certain properties.  We use Theorem \ref{thm: completion-proving machine} to show that the completion of $A$ is $T$.

\begin{theorem}[\cite{Heitmann_locals}, Proposition 1] \label{thm: completion-proving machine}
Let $(R, M \inter R)$ be a quasi-local subring of a complete local ring $(T, M)$. If $R \to T/M^2$ is onto and $IT \inter R = I$ for every finitely generated ideal $I$ of $R$, then $R$ is Noetherian and $\widehat{R} \iso T$.
\end{theorem}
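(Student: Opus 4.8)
The plan is to first dispatch the Noetherian conclusion, which falls out almost at once from the contraction hypothesis, and then to identify $\widehat R$ with $T$ by showing that the $\mathfrak m$-adic filtration on $R$ (where $\mathfrak m := M\cap R$ is the maximal ideal of $R$) is induced by the $M$-adic filtration on $T$ and that $R$ surjects onto each $T/M^k$.

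For Noetherianity, let $J$ be an arbitrary ideal of $R$. Since $T$ is Noetherian, the extended ideal $JT$ is finitely generated, and because every element of $JT$ is a $T$-linear combination of elements of $J$, we may choose a finite generating set $y_1,\dots,y_s$ of $JT$ with each $y_i\in J$. Setting $J':=(y_1,\dots,y_s)R$, a finitely generated ideal of $R$, we have $J'T=JT$, so the hypothesis gives $J\subseteq JT\cap R=J'T\cap R=J'$; since $J'\subseteq J$ is clear, $J=J'$ is finitely generated, and hence $R$ is Noetherian.

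For the completion, I would first produce a convenient generating set for $M$. As $T$ is Noetherian, $M$ is generated as a $T$-module by finitely many elements $m_1,\dots,m_n$; using that $R\to T/M^2$ is onto, choose $x_i\in R$ with $x_i\equiv m_i\pmod{M^2}$, and note $x_i\in M\cap R=\mathfrak m$. Then $M=(x_1,\dots,x_n)T+M^2$, and Nakayama's lemma applied to the finitely generated $T$-module $M/(x_1,\dots,x_n)T$ over the local ring $T$ forces $M=IT$, where $I:=(x_1,\dots,x_n)R$. Applying the contraction hypothesis to the finitely generated ideals $I^k$ of $R$ then yields
\[
\mathfrak m^k=I^k=I^kT\cap R=(IT)^k\cap R=M^k\cap R\qquad\text{for all }k\ge 1,
\]
so each map $R/\mathfrak m^k\to T/M^k$ induced by the inclusion $R\hookrightarrow T$ is injective. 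Next I would show it is also onto, by induction on $k$: the cases $k=1,2$ are the hypothesis, and for the inductive step, given $t\in T$ pick $r\in R$ with $t-r\in M^k=I^kT$, write $t-r=\sum_j a_jt_j$ with $a_j\in I^k\subseteq R$ and $t_j\in T$, choose $s_j\in R$ with $t_j-s_j\in M$, and observe
\[
t-\Bigl(r+\sum_j a_js_j\Bigr)=\sum_j a_j(t_j-s_j)\in I^kM\subseteq M^{k+1}.
\]
Thus $R/\mathfrak m^k\cong T/M^k$ compatibly in $k$, and passing to inverse limits gives $\widehat R=\varprojlim_k R/\mathfrak m^k\cong\varprojlim_k T/M^k=T$, the last equality because $T$ is complete.

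The Nakayama reduction and the inverse-limit bookkeeping are routine; the step that requires the most care is the simultaneous use of both hypotheses to establish $\mathfrak m^k=M^k\cap R$ together with the inductive surjectivity of $R\to T/M^k$, in particular keeping track that the contraction hypothesis is invoked only for finitely generated ideals (the ideals $I^k$ and the $J'$ above). Beyond this bookkeeping I do not anticipate a serious obstacle.
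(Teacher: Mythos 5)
The paper does not prove this statement; it cites it directly as \cite{Heitmann_locals}, Proposition~1, and uses it as a black box. So there is no ``paper's own proof'' to compare against. That said, your argument is a correct, self-contained proof, and it follows the standard route (essentially Heitmann's original): establish Noetherianity from the contraction hypothesis via the trick of choosing a finite generating set of $JT$ from inside $J$; use surjectivity onto $T/M^2$ together with Nakayama to realize $M=IT$ for a finitely generated ideal $I$ of $R$; deduce $\mathfrak m^k = I^k = M^k\cap R$ from the contraction hypothesis applied to $I^k$; prove surjectivity of $R\to T/M^k$ by induction; and pass to inverse limits. The only cosmetic point is that in the displayed chain of equalities the leftmost identity $\mathfrak m^k=I^k$ is cleanest to read as the squeeze $I^k\subseteq\mathfrak m^k\subseteq M^k\cap R=I^k$ rather than as a standalone first step, but this is immaterial. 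No gaps.
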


We now recall the definition of an $N$-subring of a complete local ring, first defined in \cite{Heitmann_UFDs}. $N$-subrings will play a crucial role in our constructions.

\begin{definition}[\cite{Heitmann_UFDs}] \label{def: N-subring}
For $(T, M)$ a complete local ring, an {\em $N$-subring of $T$} is a quasi-local UFD $(R, M \inter R)$ contained in $T$ that satisfies the following properties:
\begin{enumerate}
    \item $\abs{R} \leq \sup\set{\aleph_0, \abs{T/M}}$, with equality only if $T/M$ is countable,
    \item $Q \inter R = (0)$ for all $Q \in \Ass(T)$, and
    \item If $t \in T$ is regular and $P \in \Ass(T/tT)$, then $\height(P \inter R) \leq 1$.
\end{enumerate}
\end{definition}

Given a complete local ring $T$, we aim to construct a UFD that satisfies particular properties, including that its completion is $T$. One common approach in the literature to do this is to construct an ascending chain of $N$-subrings of $T$ whose union is the desired UFD. An important step in doing this is to start with an $N$-subring of $T$ and construct a strictly larger $N$-subring of $T$ that satisfies some desirable properties. The next definition, taken from \cite{Heitmann_UFDs}, gives examples of properties in which we are interested. Specifically, given an $N$-subring $R$, we are interested in constructing another $N$-subring $S$ such that $S$ contains $R$, prime elements of $R$ remain prime in $S$, and $S$ is countable or has the same cardinality as $R$.

\begin{definition}[\cite{Heitmann_UFDs}] \label{def: A-extension}
Let $(T, M)$ be a complete local ring and let $(R, M \inter R)$ be an $N$-subring of $T$. We say that an $N$-subring $(S, M \inter S)$ of $T$ is an {\em $A$-extension of $R$} if $R \sub S$, prime elements of $R$ are prime in $S$ and $\abs{S} \leq \sup\set{\aleph_0, \abs{R}}$.
\end{definition}

For our purposes, we make a small addition to Definition \ref{def: A-extension}. Specifically, we are interested in $A$-extensions whose intersection with elements of a particular set of prime ideals of $T$ is the zero ideal.

\begin{definition} \label{A_X-extension}
Let $(T, M)$ be a complete local ring, let $(R, M \inter R)$ be an $N$-subring of $T$, and let $X \sub \Spec(T)$ be such that $X_i \inter R = (0)$ for all $X_i \in X$. We say that an $N$-subring $(S, M \inter S)$ of $T$ is an {\em $A_X$-extension of $R$} if $S$ is an $A$-extension of $R$ and $X_i \inter S = (0)$ for all $X_i \in X$.
\end{definition}

As mentioned earlier, to construct our UFDs, we take the union of an ascending chain of $N$-subrings. To ensure that this union is a UFD, we use Lemma \ref{lem: heitmann lemma 6}, found in \cite{Heitmann_UFDs}. In addition, we use Lemma \ref{lem: heitmann lemma 6} at intermediate stages of our construction to ensure that a union of an ascending chain of $N$-subrings is itself an $N$-subring. To state the lemma more succinctly, we make use of the following definition.

%We will often aim to construct a ring with some major property, which can be understood as a ring that satisfies some infinite set of minor properties. We will then construct infinitely many $N$-subring extensions, each ensuring that another one of the minor properties is satisfied. In these cases, we will take the union of each of these $N$-subrings, and argue that the union satisfies the major property. Lemma \ref{lem: heitmann lemma 6} is very important as it tells us precisely when this process will yield another $N$-subring, depending on the cardinality of the collection of $N$-subrings that we are taking the union of. If there happen to be uncountably many such $N$-subrings, the lemma specifies how this collection must be ordered so that we can take advantage of transfinite induction. To state the lemma succinctly, we make use of the following definition.

\begin{definition}[\cite{Heitmann_UFDs}] \label{def: gamma}
For $\O$ a well-ordered set and $\a \in \O$, let $\g(\a) \coloneqq \sup\set{\b \in \O \mid \b < \a}$.
\end{definition}

\begin{lemma}[\cite{Heitmann_UFDs}, Lemma 6] \label{lem: heitmann lemma 6}
Let $(T, M)$ be a complete local ring and let $(R_0, M \inter R_0)$ be an $N$-subring of $T$. Let $\O$ be a well-ordered index set with least element 0 such that either $\O$ is countable or, for every $\a \in \O$, we have $\abs{\set{\b \in \O \mid \b < \a}} < \abs{T/M}$. Suppose further that $\set{R_\a}_{\a \in \O}$ is an ascending collection of rings such that if $\g(\a) = \a$ then $R_\a = \bigunion_{\b < \a} R_\b$, while if $\g(\a) < \a$ then $R_\a$ is an $A$-extension of $R_{\g(\a)}$. Then $(S, M \inter S) \coloneqq \bigunion_{\a \in \O} R_\a$ satisfies the conditions to be an $N$-subring of $T$ except for possibly the cardinality condition, but satisfies the inequality $\abs{S} \leq \sup\set{\aleph_0, \abs{R_0}, \abs{\O}}$. Moreover, elements which are prime in some $R_\a$ are prime in $S$.
\end{lemma}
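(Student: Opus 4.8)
The plan is to prove, by transfinite induction on $\a \in \O$, the statement $P(\a)$: the ring $R_\a$ is an $N$-subring of $T$ with $\abs{R_\a} \le \sup\set{\aleph_0, \abs{R_0}, \abs{\set{\b \in \O \mid \b < \a}}}$, and every element that is prime in some $R_\b$ with $\b \le \a$ remains prime in $R_\a$. Once this is known for all $\a$, the conclusion for $S \coloneqq \bigunion_{\a \in \O} R_\a$ will follow by running the limit-stage argument below one last time, with $\O$ playing the role of $\set{\b < \a}$; the reason $S$ may fail the cardinality condition (1) of Definition~\ref{def: N-subring} while each individual $R_\a$ does not is simply that $\abs{\O}$ itself need not be $< \abs{T/M}$.

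The base case $\a = 0$ is the hypothesis that $R_0$ is an $N$-subring. At a successor-type stage $\g(\a) < \a$, the ring $R_\a$ is, by hypothesis, an $A$-extension of $R_{\g(\a)}$ — which is an $N$-subring by $P(\g(\a))$ — so $R_\a$ is an $N$-subring of $T$ directly from Definition~\ref{def: A-extension}; the cardinality bound follows by combining $\abs{R_\a} \le \sup\set{\aleph_0, \abs{R_{\g(\a)}}}$ with the inductive bound on $\abs{R_{\g(\a)}}$, and persistence of prime elements follows from the inductive hypothesis at $\g(\a)$ together with the definition of $A$-extension.

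The work is in the limit-type stage $\g(\a) = \a$, where $R_\a = \bigunion_{\b < \a} R_\b$; the identical reasoning will then apply to $S$. That $R_\a$ is a domain and is quasi-local with maximal ideal $M \inter R_\a$ is a routine union argument (any element outside $M \inter R_\a$ already lies, as a unit, in some $R_\b$). Conditions (2) and (3) of Definition~\ref{def: N-subring} are ``finite-witness'' conditions: a nonzero element of $Q \inter R_\a$ for $Q \in \Ass(T)$, or a chain $\mathfrak{p}_0 \subn \mathfrak{p}_1 \subn P \inter R_\a$ of primes of $R_\a$, involves only finitely many elements of $R_\a$ and so already lives inside some $R_\b$, contradicting that $R_\b$ is an $N$-subring. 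The cardinality bound is immediate from $\abs{R_\a} \le \abs{\set{\b < \a}} \cdot \sup_{\b < \a}\abs{R_\b}$ and the inductive bounds, and condition (1) then follows from this bound together with the dichotomy on $\O$ (either $\O$ is countable, or $\abs{\set{\b < \a}} < \abs{T/M}$) and condition (1) for $R_0$ — this is the one place the hypothesis on $\O$ is genuinely used.

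The main obstacle is that $R_\a$, and ultimately $S$, must be shown to be a UFD, and a directed union of UFDs need not be one. For this I would use the standard fact that an integral domain is a UFD exactly when every nonzero nonunit is a product of prime elements. Given a nonzero nonunit $p \in R_\a$, pick $\b < \a$ with $p \in R_\b$ and factor $p$ into primes of $R_\b$; it then suffices to see that a prime element $q$ of $R_\b$ stays prime in $R_\a$. If $q \mid ab$ in $R_\a$, choose $\b'$ with $\b \le \b' < \a$ and $a, b \in R_{\b'}$; by the inductive hypothesis at $\b'$ the element $q$ is prime in $R_{\b'}$, so $q$ divides $a$ or $b$ there, and hence in $R_\a$. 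Since $q$ is visibly a nonzero nonunit of $R_\a$, it is prime in $R_\a$, so $p$ factors into primes of $R_\a$ and $R_\a$ is a UFD — and the persistence-of-primes clause of $P(\a)$ is recorded along the way. Replaying this argument with $\O$ in place of $\set{\b < \a}$ yields that $S$ is a quasi-local UFD satisfying (2) and (3), with $\abs{S} \le \sup\set{\aleph_0, \abs{R_0}, \abs{\O}}$, and that elements prime in some $R_\a$ are prime in $S$. The only delicate point is bookkeeping: the persistence-of-primes clause must be threaded through every stage of the induction, since it is exactly what the UFD argument relies on.
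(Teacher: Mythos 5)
The paper cites this lemma directly from Heitmann's 1993 paper and does not reprove it, so there is no in-paper proof to compare against; your transfinite-induction argument is, however, essentially Heitmann's. The structure is correct, and the crucial ingredients are all present: at limit stages (and for the final union $S$) the union is a UFD because primes persist, and the $A$-extension condition is precisely what guarantees that persistence; the cardinality dichotomy on $\Omega$ is correctly identified as the one place where condition~(1) of Definition~\ref{def: N-subring} is controlled for each $R_\a$ but not for $S$ itself. One phrasing fix is worth making for condition~(3): a chain $\mathfrak{p}_0 \subsetneq \mathfrak{p}_1 \subsetneq P \cap R_\a$ of primes of $R_\a$ does not literally ``involve only finitely many elements'' --- the primes are infinite sets. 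What you want is that the strict inclusions are witnessed by finitely many elements $a_1 \in \mathfrak{p}_1 \setminus \mathfrak{p}_0$ and $a_2 \in (P \cap R_\a) \setminus \mathfrak{p}_1$, which lie in a single $R_\b$; the contracted chain $\mathfrak{p}_0 \cap R_\b \subsetneq \mathfrak{p}_1 \cap R_\b \subsetneq P \cap R_\b$ is then strict and gives $\height(P \cap R_\b) \geq 2$, contradicting that $R_\b$ is an $N$-subring. With that adjustment the proof is complete.
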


The following lemma is a technical result that we use in our construction of everywhere 1-noncatenary local UFDs.

\begin{lemma}[\cite{Jensen}, Lemma 3.1] \label{lem: jensen}
Let $(T, M)$ be a complete local ring, let $(R, M \inter R)$ be an $N$-subring of $T$, and let $X$ be a finite set of nonmaximal prime ideals of $T$ such that $X_i \inter R = aR$ for every $X_i \in X$. Let $I$ be a finitely generated ideal of $R$ with $c \in IT \inter R$. Then there exists an $A$-extension $(S, M \inter S)$ of $R$ such that $\abs{S} = \abs{R}$, $X_i \inter S = aS$ for every $X_i \in X$, and $c \in IS$.
\end{lemma}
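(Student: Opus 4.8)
The statement is a ``controlled'' version of the standard $A$-extension construction, so the plan is to follow the template of \cite{Heitmann_UFDs}, inducting on the number of generators of $I$ while carrying the conditions at the primes of $X$ along. First dispose of the degenerate cases: $S=R$ works if $I=(0)$, if $c=0$, or if $I$ is principal (in the last case because $bT\inter R=bR$ whenever $b\ne 0$, so $c\in IR$). This last fact is the basic tool: since $R$ is a UFD in which, by Definition \ref{def: N-subring}(2), every nonzero element is a nonzerodivisor of $T$, it suffices to prove $\pi T\inter R=\pi R$ for each prime $\pi$ of $R$, and if $x\in(\pi T\inter R)\setminus\pi R$ then $x$ lies in every $P\in\Ass(T/\pi T)$, so $\pi R\subsetneq P\inter R$ for such $P$, contradicting Definition \ref{def: N-subring}(3). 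Using it, and dividing the generators by their greatest common divisor, we may assume $I=(b_1,\dots,b_n)R$ is a proper ideal with each $b_i\ne 0$, $\gcd(b_1,\dots,b_n)=1$, and $n\ge 2$; here $R$ is infinite, being a domain that is not a field. Each $X_i\inter R=aR$ is prime, as $R/aR$ embeds in the domain $T/X_i$, so $a$ is $0$ or a prime of $R$ and $aR\ne R$; coprimality of the $b_i$ then forces some $b_k\notin aR$, hence $b_k\notin X_i$ for all $i$.

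For the inductive step, first improve the generating set: a routine prime-avoidance argument (using that $R$ is infinite) produces $\lambda\in R$ so that, after replacing $b_2$ by $b_2+\lambda b_1$ --- which leaves $I$ unchanged --- one has $\gcd(b_2,\dots,b_n)=1$, and then also $\gcd(b_1,\dots,b_n)=1$ and some $b_k\notin aR$ with $k\ge 2$. Fix an expression $c=b_1t_1+\dots+b_nt_n$ with $t_i\in T$. The crux is to find $s\in T$ lying in the coset $t_1+\bigl((b_2,\dots,b_n)T:b_1\bigr)$ for which $R'\coloneqq R[s]_{M\inter R[s]}$ is an $N$-subring and an $A$-extension of $R$ with $\abs{R'}=\abs{R}$ and $X_i\inter R'=aR'$ for every $i$. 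Granting this, the coset condition yields $c-b_1s\in(b_2,\dots,b_n)T\inter R'$, so the inductive hypothesis --- applied to $R'$, the set $X$, the $(n-1)$-generated ideal $(b_2,\dots,b_n)R'$, and the element $c-b_1s$ (note $b_k\notin aR'$, since $a$ stays prime in $R'$ and $a\mid b_k$ in $R'$ would give $a\mid b_k$ in $R$) --- produces an $A$-extension $S$ of $R'$ with $\abs{S}=\abs{R}$, $X_i\inter S=aS$ for all $i$, and $c-b_1s\in(b_2,\dots,b_n)S$. Then $c=b_1s+(c-b_1s)\in IS$, and $S$ is an $A$-extension of $R$ by transitivity of the $A$-extension relation, which completes the induction.

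To produce $s$, one argues as in the proof that $A$-extensions exist in \cite{Heitmann_UFDs}: by Lemma \ref{lem: heitmann lemma 2}, or by Lemma \ref{lem: heitmann lemma 3} when $\abs{T/M}$ is uncountable, choose $s$ inside its prescribed coset while avoiding a suitable countable (respectively small) family of cosets $r+P$, so that $s$ is transcendental over $R$ --- whence $R[s]$ is a polynomial ring over the UFD $R$ and $R'$ is a quasi-local UFD in which the primes of $R$ stay prime, an $N$-subring of the right cardinality --- and so that $s+X_i$ is transcendental over the image of $R$ in $T/X_i$ for each $i$, which forces $X_i\inter R'=aR'$. This is possible because the ``direction'' of the coset, the ideal $(b_2,\dots,b_n)T:b_1$, contains $(b_2,\dots,b_n)T$ and hence lies in none of the relevant primes $P$: not in any $P\in\Ass(T)$ nor any $X_i$, because the regular element $b_k$ with $b_k\notin aR$ avoids all of these; and not in any prime $P$ relevant to Definition \ref{def: N-subring}(3) --- that is, $P\in\Ass(T/uT)$ for some regular $u$ with $P\inter R\ne(0)$ --- because $(b_2,\dots,b_n)T\sub P$ would put $b_2,\dots,b_n$ into $P\inter R$, a nonzero prime of height at most one, hence of the form $\pi R$, forcing $\pi\mid\gcd(b_2,\dots,b_n)=1$.

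The main obstacle is precisely this compatibility. To arrange $c\in IS$ we must confine $s$ to the fixed coset $t_1+\bigl((b_2,\dots,b_n)T:b_1\bigr)$, yet $s$ must simultaneously be generic enough for every clause of Definition \ref{def: N-subring} --- the subtle one being (3) --- and for $X_i\inter R'=aR'$. If some prime $\pi$ of $R$ divided all of $b_2,\dots,b_n$, then modulo a prime $P$ of $T$ over $\pi R$ with $P\in\Ass(T/uT)$ the image of $s$ would be forced to equal $\overline{c}/\overline{b_1}$, which is algebraic over $R/\pi R$, and $\height(P\inter R')$ would become $2$, violating (3); the adjustment of the generating set at the start of the step --- securing $\gcd(b_2,\dots,b_n)=1$ --- is exactly what removes this. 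The remaining bookkeeping --- that the family of cosets to be avoided is countable (respectively of cardinality $<\abs{T/M}$), and the cardinality estimates for $R'$ and $S$ --- is routine and parallels \cite{Heitmann_UFDs}.
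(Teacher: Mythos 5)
The paper does not prove this lemma; it cites it directly from \cite{Jensen} (Lemma 3.1), so there is no internal proof to compare against. I will therefore assess your argument on its own terms.

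Your induction on the number $n$ of generators has a genuine gap when it reaches $n=2$, and $n=2$ is in fact the working base of the interesting part of the recursion (the case $n=1$ being immediate). You correctly identify the danger: the direction ideal $J=(b_2,\dots,b_n)T:b_1$ must avoid every prime $P\in\Ass(T/uT)$ with $P\inter R=\pi R\ne(0)$, and this can fail if $\pi\mid\gcd(b_2,\dots,b_n)$. Your fix is to first arrange $\gcd(b_2,\dots,b_n)=1$ by replacing $b_2$ with $b_2+\lambda b_1$. But for $n=2$ that condition reads $\gcd(b_2)=1$, i.e.\ $b_2+\lambda b_1$ is a unit; since $b_2+\lambda b_1\in I$, this forces $I=R$. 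For a proper 2-generated $I$ this step cannot be carried out, and the subsequent prime-avoidance argument does not apply.

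The failure is not hypothetical. After the initial reduction one has $I=(b_1,b_2)$ proper with $\gcd(b_1,b_2)=1$, so $b_2$ has a prime factor $\pi$ with $\pi\nmid b_1$. Every $P\in\Ass(T/\pi T)$ satisfies $P\inter R=\pi R$: indeed $\pi R\sub P\inter R$, and $\height(P\inter R)\le 1$ by condition (3) of Definition \ref{def: N-subring} applied to $R$. Such a $P$ lies in the family to be avoided, yet $J=b_2T:b_1\sub P$: if $tb_1\in b_2T\sub P$ and $b_1\notin P$, then $t\in P$. Consequently every $s\in t_1+J$ has $s+P=t_1+P$, and reducing $c=b_1t_1+b_2t_2$ modulo $P$ kills the $b_2t_2$ term, so $\overline{s}=\overline{c}\,\overline{b_1}^{-1}$ satisfies the nontrivial polynomial $\overline{b_1}X-\overline{c}$ over $R/\pi R$. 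Then $P\inter R[s]$ contains $\pi$ and $b_1s-c$; since $\pi\nmid b_1$, the element $b_1s-c\notin\pi R[s]$, so $P\inter R[s]\supsetneq\pi R[s]$ has height at least $2$, and $R[s]_{(M\inter R[s])}$ violates condition (3). No choice of $s$ in the prescribed coset repairs this, so the single-element adjunction does not produce an $N$-subring here. Closing the gap appears to require a different device for the two-generator step --- for instance adjoining the full solution vector $(u_1,u_2)$ of $c=b_1u_1+b_2u_2$ at once and analyzing $R[u_1,u_2]$, or some other departure from the one-variable adjunction --- and this is not addressed in the proposal.
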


We next state a result from \cite{SMALL_2017} giving necessary and sufficient conditions for a complete local ring to be the completion of a noncatenary local UFD. %We state these results in Theorem \ref{thm: avery domains} and Theorem \ref{thm: avery}.
%These results are both in the spirit of Theorem \ref{thm: lech} and consider restricted classes of local domains to characterize the completions of. As should be expected, both theorems add conditions to those provided by Lech to ensure that they are sufficient. 
This result is of particular importance to us as the UFDs that we construct are noncatenary.

%\begin{theorem}[\cite{SMALL_2017}, Theorem 2.10] \label{thm: avery domains}
%Let $(T, M)$ be a complete local ring. Then $T$ is the completion of a noncatenary local domain if and only if the following conditions hold:
%\begin{enumerate}
%    \item No integer of $T$ is a zero divisor,
%    \item $\dep(T) \geq 1$, and
%    \item There exists $P \in \Min(T)$ such that $1 < \dim(T/P) < \dim(T)$.
%\end{enumerate}
%\end{theorem}

\begin{theorem}[\cite{SMALL_2017}, Theorem 3.8] \label{thm: avery}
Let $(T, M)$ be a complete local ring. Then $T$ is the completion of a noncatenary local UFD if and only if the following conditions hold:
\begin{enumerate}
    \item No integer of $T$ is a zero divisor,
    \item $\dep(T) \geq 2$, and
    \item There exists $P \in \Min(T)$ such that $2 < \dim(T/P) < \dim(T)$.
\end{enumerate}
\end{theorem}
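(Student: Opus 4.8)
The plan is to prove the two implications separately. The forward implication rests on standard structural facts, so suppose $T = \widehat{A}$ with $A$ a noncatenary local UFD. A Noetherian domain of Krull dimension at most two is catenary, so $\dim A \ge 3$ and hence $\dim T \ge 3$; then $T$ is neither a field nor a DVR, so Theorem~\ref{HeitmannUFDTheorem} immediately yields conditions (1) and (2). For (3) I would first observe that noncatenarity can be exhibited by two saturated chains from $(0)$ to $M$ of distinct lengths: given saturated chains of unequal length between arbitrary primes $P \subsetneq Q$, prepend a saturated chain from $(0)$ to $P$ and append one from $Q$ to $M$ to each, changing both lengths by the same amount. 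Next, in a UFD of dimension $\ge 3$ every saturated chain from $(0)$ to $M$ has length at least $3$, since a chain $(0) \subsetneq (x) \subsetneq M$ would force $\dim(A/(x)) = 1$, contradicting $\dim(A/(x)) \ge \dim A - 1 \ge 2$. Finally, every minimal prime of $T$ contracts to $(0)$ in $A$ (a nonzero element of the domain $A$ is a nonzerodivisor in $T$ by flatness, hence avoids $\Ass(T) \supseteq \Min(T)$), and $T$, being complete, is excellent and thus universally catenary; the standard correspondence (McAdam, Ratliff) between saturated chains from $(0)$ to $M$ in a local domain and the coheights of the minimal primes of its completion then converts the existence of two such chains of distinct lengths, both at least $3$ and at most $\dim T$, into the existence of $P \in \Min(T)$ with $2 < \dim(T/P) < \dim(T)$.

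For sufficiency, assume (1)--(3), fix $P \in \Min(T)$ with $2 < n := \dim(T/P) < m := \dim(T)$, and choose a minimal prime $P^{\ast}$ with $\dim(T/P^{\ast}) = m$ (which exists since $\dim T = m$, and satisfies $P^{\ast}\neq P$). Pick saturated chains $P = Q_0 \subsetneq \cdots \subsetneq Q_n = M$ and $P^{\ast} = Q^{\ast}_0 \subsetneq \cdots \subsetneq Q^{\ast}_m = M$ in $T$. I would build $A$ as the union of an ascending tower of $N$-subrings of $T$, starting from a suitable $N$-subring built from the prime subring (legitimate by (1)), where at each stage one passes to an $A_X$-extension with $X = \{Q_0, \dots, Q_{n-1}, Q^{\ast}_0, \dots, Q^{\ast}_{m-1}\}$ that also advances surjectivity of $A \to T/M^2$ and the condition $IT \inter A = I$ for finitely generated ideals $I$ of $A$. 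Lemma~\ref{lem: heitmann lemma 6} (with the index set's cardinality controlled using $\dep(T) \ge 2$ and the avoidance Lemmas~\ref{lem: heitmann lemma 2} and~\ref{lem: heitmann lemma 3}) makes the union an $N$-subring, hence a UFD, and Theorem~\ref{thm: completion-proving machine} then gives $\widehat{A} = T$. Since $Q_i \inter A = Q^{\ast}_j \inter A = (0)$ is maintained throughout the construction, intersecting the two fixed chains of $T$ with $A$ produces saturated chains in $A$ from $(0)$ to $M$ of lengths $n$ and $m$ — the step that Lemma~\ref{lem: noncatenary-proving machine} is designed to package — so that $A$ is noncatenary.

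The hard part will be carrying out the transfinite induction so that a single $A_X$-extension step simultaneously (a) preserves the $N$-subring property and the primeness of prime elements, so that the union is a UFD; (b) advances the two conditions of Theorem~\ref{thm: completion-proving machine}, so that $\widehat{A} = T$; and (c) keeps each $Q_i$ and $Q^{\ast}_j$ meeting $A$ in $(0)$, so that the two chains of different length survive into $A$. Requirements (b) and (c) pull against one another — (b) wants to adjoin many elements of $T$ to $A$, while (c) forbids adjoining anything lying in one of the $Q_i$ or $Q^{\ast}_j$ — and reconciling them is exactly where the avoidance Lemmas~\ref{lem: heitmann lemma 2} and~\ref{lem: heitmann lemma 3}, together with the hypotheses $\dep(T) \ge 2$ and ``no integer of $T$ is a zerodivisor,'' are used.
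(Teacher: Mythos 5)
This theorem is a cited result from \cite{SMALL_2017}; the present paper does not prove it, so there is no in-paper proof to compare against directly. However, the constructions the paper does carry out (Lemma~\ref{lem: noncatenary-proving machine} and Theorem~\ref{thm: main infinitely}) are modeled on the machinery of \cite{SMALL_2017}, and measured against them your sufficiency argument has a real error. Your forward direction is plausible in outline --- the reduction to two maximal chains of distinct length, the lower bound of~$3$ in a UFD, and an appeal to a Ratliff-type correspondence between maximal-chain lengths in a local domain and the coheights of minimal primes of its completion --- though that correspondence is doing essentially all the work and deserves an explicit citation rather than a parenthetical.

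The sufficiency direction, as written, cannot work. You take $X = \{Q_0, \dots, Q_{n-1}, Q^{\ast}_0, \dots, Q^{\ast}_{m-1}\}$, build $A$ so that $Q_i \cap A = Q_j^{\ast} \cap A = (0)$ for every index, and then claim that ``intersecting the two fixed chains of $T$ with $A$ produces saturated chains in $A$ from $(0)$ to $M$ of lengths $n$ and $m$.'' But if $Q_0 \cap A = \cdots = Q_{n-1} \cap A = (0)$ and $Q_n = M$, the intersected chain is $(0) \subsetneq M \cap A$, which has length one, not $n$; pushing an entire chain (minus its top) into the generic formal fiber collapses it rather than transports it. The actual mechanism --- visible in Lemma~\ref{lem: noncatenary-proving machine} and Lemma~\ref{lem: gen set dim} here, mirroring \cite{SMALL_2017} --- is nearly the opposite at the top: one arranges for $A$ to \emph{contain a generating set} for the coheight-one primes $Q^{(1)}, Q^{(2)}$, so that $Q^{(i)} \cap A$ again has coheight one; only the minimal primes are sent to $(0)$, and the two height-one primes $P^{(1)}_1, P^{(2)}_1$ are steered to contract to the \emph{same} height-one prime $J$ of $A$. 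The two chains in $A$ are then built upward from $J$ by repeated prime avoidance and the going-down theorem, not by intersecting a prefixed chain of $T$ termwise. Your remark that Lemma~\ref{lem: noncatenary-proving machine} ``packages'' the step you describe misreads that lemma; its hypotheses demand exactly the contraction pattern your choice of $X$ destroys.
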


We now turn to results found in \cite{Bonat_UFDs}.
To state the lemmas more succinctly, we introduce the following definitions.

\begin{definition}
For $T$ a commutative ring, we define \[\rAss(T) \coloneqq \bigunion_{z \emph{ regular in } T} \Ass_T(T/zT).\] For $R$ a subring of $T$, we define \[\rAss^{(R)}(T) \coloneqq \bigunion_{\substack{z \emph{ regular in } T \\ z \in R}} \Ass_T(T/zT).\]
\end{definition}

\begin{lemma}[\cite{Bonat_UFDs}, Lemma 2.6] \label{lem: bonat 2.6}
Let $(T, M)$ be a complete local ring with $\dep(T) \geq 2$, let $(R, M \inter R)$ be a countable $N$-subring of $T$, and let $I$ be a nonmaximal prime ideal of $T$ with $I \inter R = (0)$. Let $\set{Q_j}_{j \in \N}$ be a countable set of prime ideals of $T$ such that for every $j$, $Q_j \nsub I$ and $Q_j \nsub P$ for every $P \in \Ass(T) \union \rAss(T)$. Then there exists a countable $A_{\set{I}}$-extension $(S, M \inter S)$ of $R$ that contains a generating set for each $Q_j$.
\end{lemma}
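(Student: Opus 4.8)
The plan is to build $S$ as a countable union of an ascending chain of $N$-subrings of $T$, at each step either enlarging to capture one more generator of one more $Q_j$ or closing up, and to use the machinery of $A_{\set{I}}$-extensions (Definition \ref{A_X-extension}) together with Heitmann's cardinality-sensitive prime avoidance (Lemma \ref{lem: heitmann lemma 2}) to control the intersection with $I$. Concretely, I would first fix an enumeration of all pairs $(j,\ell)$ indexing the $\ell$-th element $q_{j,\ell}$ of a chosen (countable, since $T$ is complete local with countable... no—) generating set of each $Q_j$; since the index set is countable we may list the desired new elements as $t_1, t_2, \dots$. Starting from $R_0 = R$, I would construct $R_0 \subseteq R_1 \subseteq R_2 \subseteq \cdots$ where $R_{n}$ is obtained from $R_{n-1}$ by an $A_{\set{I}}$-extension (of the kind produced in the underlying adjoining-an-element lemmas from \cite{Heitmann_UFDs} and \cite{Bonat_UFDs}) that contains $t_n$, keeping each $R_n$ countable. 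Then $S \coloneqq \bigunion_n R_n$ is countable, contains a generating set for every $Q_j$, and by Lemma \ref{lem: heitmann lemma 6} satisfies all the $N$-subring conditions except possibly cardinality, which is fine since $\abs{S} \le \sup\set{\aleph_0, \abs{R_0}, \aleph_0} = \aleph_0$; it is also an $A$-extension of $R$ because prime elements of $R$ stay prime (Lemma \ref{lem: heitmann lemma 6}) and $\abs{S} \le \aleph_0$.

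The substantive point — and the place where the hypotheses on the $Q_j$ are used — is maintaining $X_i \inter S = (0)$ for $X_i = I$, i.e. ensuring the single step $R_{n-1} \rightsquigarrow R_n$ that throws in a new element $t_n$ can be carried out as an $A_{\set{I}}$-extension. Here one does not literally adjoin $t_n$; rather, following the standard technique, one adjoins an element of the form $t_n + a$ where $a$ is chosen carefully so that the resulting ring is still an $N$-subring with zero intersection with $I$ (and so that we retain the UFD property and conditions (2),(3) of Definition \ref{def: N-subring}). The element $a$ is found by an avoidance argument: we must choose $a \in I$ (so that adjoining $t_n+a$ does not enlarge the intersection with $I$ beyond what $t_n$ forces, and in fact lets us keep it zero) avoiding a countable collection of ``bad'' cosets coming from $\Ass(T)$, $\rAss(T)$, and the finitely generated ideals of $R_{n-1}$. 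This is exactly the setting of Lemma \ref{lem: heitmann lemma 2}: the hypotheses $Q_j \nsub I$ and $Q_j \nsub P$ for $P \in \Ass(T)\union\rAss(T)$ guarantee that the relevant $t_n$ (a generator of some $Q_j$) is not already trapped in $I$ or in any associated prime, so $I$ is not contained in the union of the bad cosets, and a legal choice of $a$ exists.

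The main obstacle I anticipate is verifying that each one-element enlargement $R_{n-1}\rightsquigarrow R_n$ simultaneously (i) preserves all three $N$-subring axioms, (ii) is an $A$-extension (prime elements remain prime), and (iii) preserves $I \inter \,\cdot\, = (0)$, all while adjoining the prescribed element up to a correction term — this is the technical heart and is essentially a packaging of the arguments in \cite{Heitmann_UFDs} and \cite{Bonat_UFDs}, but the bookkeeping (which countably many primes and cosets must be avoided at each stage, and why the hypotheses on $Q_j$ suffice to feed Lemma \ref{lem: heitmann lemma 2}) is delicate. Once that single-step lemma is in hand, the transfinite-free countable union, the cardinality bound, and the conclusion that $S$ is the desired $A_{\set{I}}$-extension containing generating sets for all the $Q_j$ follow formally from Lemma \ref{lem: heitmann lemma 6} and Definitions \ref{def: A-extension} and \ref{A_X-extension}.
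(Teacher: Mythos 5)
The overall architecture — a countable ascending chain of $A_{\set{I}}$-extensions, one generator captured per step, union taken via Lemma~\ref{lem: heitmann lemma 6}, with the cardinality bound coming out to $\aleph_0$ — matches the approach behind this lemma (and behind its generalization, Lemma~\ref{lem: make R}, proved in this paper). But the key technical step, the single-generator enlargement, is described incorrectly in a way that would make the argument fail.

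You propose to adjoin $t_n + a$ with $a$ chosen \emph{in $I$}, reasoning that this ``does not enlarge the intersection with $I$.'' This inverts the actual mechanism. If $a \in I$, then $(t_n+a) + I = t_n + I$ is a \emph{fixed} coset, independent of the choice of $a$, so you have no control at all over the image of the adjoined element in $T/I$. The property that actually forces $I \cap R_{n-1}[t_n+a]_{(\cdot)} = (0)$ is that $(t_n + a) + I$ be \emph{transcendental} over $R_{n-1}$ viewed inside $T/I$ (this is precisely what Lemma~\ref{lem: adjoin transcendental} consumes, with $I$ in the set $C$). To achieve transcendence you must be able to \emph{move} the coset mod $I$, which requires the perturbation to lie \emph{outside} $I$. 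The correct form of the perturbation, as in the proof of Lemma~\ref{lem: generating set for Q}, is $a = m q$ where $q \in Q_j$ is chosen (by Lemma~\ref{lem: heitmann lemma 2}, using $Q_j \nsub I$ and $Q_j \nsub P$ for $P \in \Ass(T)\cup\rAss(T)$) so that $q \notin P$ for every $P \in C \coloneqq \set{I} \cup \Ass(T) \cup \rAss^{(R_{n-1})}(T)$, and $m \in M$ avoids the countably many cosets $r+P$ ($P \in C$, $r$ in a set of representatives for the algebraic cosets). Because $q \notin P$, multiplication by $q$ is injective mod $P$, so varying $m$ varies $(t_n + mq) + P$ freely; this is what lets Lemma~\ref{lem: heitmann lemma 2} be invoked. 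Note also that $t_n + mq$ remains in $Q_j$, and Nakayama gives that replacing $t_n$ by $t_n + mq$ still yields a generating set — your $t_n + a$ with $a \in I$ would generally leave $Q_j$ entirely, so you would not even be producing a generating set.

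A secondary misreading: the hypotheses $Q_j \nsub I$, $Q_j \nsub P$ do not guarantee that any particular generator $t_n$ of $Q_j$ lies outside $I$ or outside associated primes (individual generators can easily lie in $I$). What they guarantee is the existence of the auxiliary element $q \in Q_j$ avoiding all of $C$, which is what powers the perturbation. Likewise, the avoidance at each step is over ``algebraic'' cosets (to secure transcendence), not over cosets coming from finitely generated ideals of $R_{n-1}$ — the latter is the business of a different part of the machinery (cf.\ Lemma~\ref{lem: jensen} / Lemma~\ref{lem: make A}), not of this lemma. With the perturbation corrected to $mq$ as above, the rest of your outline goes through.
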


\begin{lemma}[\cite{Bonat_UFDs}, Lemma 2.7] \label{lem: bonat 2.7}
Let $(T, M)$ be a complete local ring and let $(R, M \inter R)$ be a countable $N$-subring of $T$. Let $Q$ be a prime ideal of $T$ such that $Q \nsub P$ for every $P \in \Ass(T) \union \rAss^{(R)}(T)$. Let $X \coloneqq \set{Q_1, Q_2, \ldots, Q_n}$ be a (possibly empty) set of prime ideals of $T$ such that $Q \nsub Q_j$ for all $j = 1, 2, \ldots, n$. Then there exists a height one prime ideal $I$ of $T$ such that $I \sub Q$ and $I \nsub P$ for every $P \in \Ass(T) \union \rAss^{(R)}(T)$. Moreover, if $X$ is not empty then $I \nsub Q_j$ for all $j = 1, 2, \ldots, n$.
\end{lemma}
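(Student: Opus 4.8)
The plan is to realize $I$ as a minimal prime of $T$ over a suitably chosen principal ideal $xT$, where $x$ is a \emph{single regular} element of $Q$ selected to avoid all the relevant primes at once. Once such an $x$ is found, the height-one conclusion will come from Krull's principal ideal theorem together with regularity of $x$, and every non-containment assertion in the statement will be automatic simply because $x \in I$.

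First I would set $C := \Ass(T) \cup \rAss^{(R)}(T) \cup X$ and check that $C$ is a countable set of nonmaximal prime ideals of $T$. It is countable because $\Ass(T)$ is finite (as $T$ is Noetherian), $X$ is finite by hypothesis, and $\rAss^{(R)}(T)$ is a countable union of finite sets: $R$ is countable, hence has only countably many regular elements $z$, and each $\Ass_T(T/zT)$ is finite. No member of $C$ equals $M$, since $Q \subseteq M$ while $Q \nsubseteq P$ for every $P \in \Ass(T) \cup \rAss^{(R)}(T)$ and $Q \nsubseteq Q_j$ for every $Q_j \in X$; in particular $Q$ is contained in no single member of $C$. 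Applying Lemma \ref{lem: heitmann lemma 2} with the set of elements taken to be $\{0\}$ then gives $Q \nsubseteq \bigcup_{P \in C} P$, so there is an element $x \in Q$ with $x \notin P$ for all $P \in C$.

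Next, since $x \notin P$ for every $P \in \Ass(T)$, the element $x$ is a nonzerodivisor of $T$; and $x \in Q \subseteq M$, so $x$ is a nonunit and $xT$ is a proper ideal. Because $xT \subseteq Q$, the prime $Q$ contains a minimal prime of $T$ over $xT$ (lift a minimal prime of $T/xT$ sitting inside $Q/xT$); call such a prime $I$, so $xT \subseteq I \subseteq Q$ with $I$ minimal over $xT$. By Krull's principal ideal theorem $\height I \le 1$, while $\height I \ge 1$ because $x$ is regular and hence lies in no minimal prime of $T$, so $I \notin \Min(T)$; thus $\height I = 1$. Finally, for any $P \in \Ass(T) \cup \rAss^{(R)}(T)$, if $I \subseteq P$ then $x \in I \subseteq P$, contradicting the choice of $x$, so $I \nsubseteq P$; the identical argument with $P$ replaced by any $Q_j \in X$ yields $I \nsubseteq Q_j$, giving the ``Moreover'' clause.

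The only bookkeeping points are the countability of $\rAss^{(R)}(T)$ and the fact that no prime in $C$ can be $M$, both of which are needed for Lemma \ref{lem: heitmann lemma 2} to apply. The single genuinely load-bearing step is ensuring that $x$ can be taken regular: regularity is exactly what promotes the bound $\height I \le 1$ from Krull's theorem to the equality $\height I = 1$, and without it the argument would only produce a prime of height at most one. Everything else is formal, so I do not anticipate any serious obstacle.
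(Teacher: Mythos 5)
Your proof is correct, and it follows the standard route for such statements (and surely the one used in the cited source): pick a single regular element $x \in Q$ that avoids all the relevant primes by applying Heitmann's countable prime avoidance lemma (Lemma \ref{lem: heitmann lemma 2}), then take $I$ to be a minimal prime of $xT$ inside $Q$, with height exactly one forced by Krull's principal ideal theorem together with regularity of $x$. All the bookkeeping — countability of $\rAss^{(R)}(T)$ via countability of $R$ and Noetherianity of $T$, nonmaximality of every member of $C$ since $Q$ would otherwise be contained in it, and $x$ being a nonzerodivisor because it avoids $\Ass(T)$ — is verified correctly.
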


\begin{lemma}[\cite{Bonat_UFDs}, Lemma 2.8] \label{lem: bonat 2.8}
Let $(T, M)$ be a complete local ring with $\dep(T) \geq 2$ and suppose $(R, M \inter R)$ is a countable $N$-subring of $T$. Let $P_1, \ldots, P_s$ be height one prime ideals of $T$ such that, for every $i = 1, 2, \ldots, s$ we have that $P_i \nsub P$ for every $P \in \Ass(T) \union \rAss^{(R)}(T)$. Let $X$ be a (possibly empty) finite set of prime ideals of $T$ such that $P_i \nsub Q$ for every $Q \in X$ and for every $i = 1, 2, \ldots, s$. Then, there exists $a \in \biginter_{i=1}^s P_i$ with $a \notin \bigunion_{Q \in X} Q$ such that $(S, M \inter S) \coloneqq R[a]_{(R[a] \inter M)}$ is an $A$-extension of $R$ with $P_i \inter S = aS$ for every $i = 1, 2, \ldots, s$.
\end{lemma}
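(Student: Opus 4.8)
The plan is to produce $a$ as a single sufficiently generic element of $I := \biginter_{i=1}^{s} P_i$ and then to take $S := R[a]_{(R[a]\inter M)}$, following the now-standard template for adjoining a transcendental element to an $N$-subring used in \cite{Heitmann_UFDs} and \cite{SMALL_2017}. I would begin with the reduction that $P_i \inter R = (0)$ for every $i$: if $z \in (P_i \inter R)\setminus\{0\}$, then since $Q \inter R = (0)$ for every $Q \in \Ass(T)$, the element $z$ lies in no associated prime of $T$ and is therefore regular in $T$; as $P_i$ has height one and contains the regular element $z$, it is a minimal prime of $zT$, so $P_i \in \Ass(T/zT) \sub \rAss^{(R)}(T)$, contradicting the hypothesis that $P_i \nsub P$ for all $P \in \Ass(T)\union\rAss^{(R)}(T)$. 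Since each $P_i$ is prime, it follows that $I \nsub Q$ for every $Q \in X$ and $I \nsub P$ for every $P \in \Ass(T)\union\rAss^{(R)}(T)$, and since $\dep(T)\ge 2$ the maximal ideal lies in none of $\Ass(T)$, $X$, or $\rAss^{(R)}(T)$ (for regular $z \in R\inter M$ one has $\dep(T/zT)\ge 1$). As $R$ is countable, $C := X \union \Ass(T) \union \rAss^{(R)}(T)$ is therefore a countable set of nonmaximal primes of $T$, and $I$ is a nonzero ideal not contained in any single member of $C$.

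I would then choose, by the coset-avoidance argument of Lemma \ref{lem: heitmann lemma 2} applied to $I$ in place of a prime (the proof of that lemma works verbatim for any nonzero ideal not contained in a single $P \in C$, and $R[x]$ is countable), an element $a \in I$ such that $a \notin r + P$ for every $P \in C$ and every $r$ in a suitable countable set $D$, where $D$ consists of $R$ together with, for each $P \in C$, a lift to $T$ of each of the countably many elements of $T/P$ that are algebraic over the image of $R$. This yields $a \in I$ with (i) $a \notin \bigunion_{P\in C}P$, so in particular $a \notin \bigunion_{Q\in X}Q$ and $a$ is regular in $T$; (ii) $a$ transcendental over $R$ (seen by passing to $T/P_0$ for a minimal prime $P_0$ of $T$ and using $P_0 \inter R = (0)$), so that $R[a]$ is a polynomial ring over $R$; and (iii) for every $P \in C$ the image of $a$ in $T/P$ is transcendental over the image of $R$, whence $P \inter R[a] = (P \inter R)[x]$.

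With $S := R[a]_{(R[a]\inter M)}$, several things must then be verified. First, $P_i \inter S = aS$ is quick: from $a \in P_i$ we get $aS \sub P_i \inter S$, and conversely, if $b \in P_i \inter S$, writing $b = f(a)/g(a)$ with $f,g\in R[x]$ and $g(a)\notin M$ (so $g(a)$ is a unit of $T$), then $f(a) = b\,g(a)\in P_i$, hence $f(0)\equiv f(a)\equiv 0 \pmod{P_i}$, so $f(0)\in P_i \inter R = (0)$, giving $x \mid f$ in $R[x]$ and $b \in aS$. Second, $S$ is an $A$-extension of $R$: $R[a]$ is a polynomial ring over the quasi-local UFD $R$, hence a UFD, a prime element $p$ of $R$ stays prime because $R[a]/pR[a] \iso (R/pR)[x]$ is a domain (and primality survives the localization), and $\abs{S} = \abs{R}$, so it remains only to check that $S$ is an $N$-subring. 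The cardinality bound is clear; condition (2) of Definition \ref{def: N-subring} follows from (iii), since $Q \inter R[a] = (Q \inter R)[x] = (0)$ and hence $Q \inter S = (0)$ for $Q \in \Ass(T)$; and condition (3) requires bounding $\height(P \inter S)$ for regular $t \in T$ and $P \in \Ass(T/tT)$, which is immediate ($\le 1$) when $P \inter R = (0)$, since $P \inter S$ then survives in the PID obtained from $R[x]$ by inverting $R \setminus \{0\}$, and which when $P \inter R \ne (0)$ rests on the standard analysis (as in \cite{Bonat_UFDs}) showing that the genericity of $a$ relative to $C$, together with $\height(P \inter R)\le 1$, forces $P \inter R[a] = (P \inter R)[x]$.

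The step I expect to be the main obstacle is exactly this last verification of condition (3) of Definition \ref{def: N-subring} for $S$ in the case $P \inter R \ne (0)$: one must ensure that imposing genericity on $a$ with respect only to the \emph{countable} family $C$ is enough to keep $\height(P\inter S)\le 1$ for every $P$ in the a priori larger family $\rAss(T)$. This is precisely the delicate bookkeeping carried out in the transcendental-adjunction arguments of \cite{Heitmann_UFDs}, \cite{SMALL_2017}, and \cite{Bonat_UFDs}, and I would follow those arguments, exploiting that any height-one prime $\mathfrak{p}$ of $R$ is principal (as $R$ is a UFD) and that any $P \in \rAss(T)$ with $P \inter R = \mathfrak{p}$ contains a member of $\rAss^{(R)}(T)$ lying over $\mathfrak{p}$.
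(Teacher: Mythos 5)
Your proposal follows exactly the template the paper itself uses in Lemma \ref{lem: adjoin transcendental}: reduce to $P_i \inter R = (0)$, collect the dangerous primes into a countable set $C$, pick $a$ avoiding a countable union of cosets so that $a$ is transcendental over $R/(P \inter R)$ modulo every $P \in C$, and then check the $N$-subring axioms for $R[a]_{(M \inter R[a])}$. The reduction $P_i \inter R = (0)$, the verification $P_i \inter S = aS$, the cardinality bound, and the handling of conditions (1) and (2) of Definition \ref{def: N-subring} are all correct.

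Two points should be tightened. First, you apply Lemma \ref{lem: heitmann lemma 2} to the ideal $I = \biginter_i P_i$, but the lemma is stated for $I \in \Spec(T)$; you assert the proof ``works verbatim'' for ideals, and while this is true and is how the cited source proceeds, it is the load-bearing move and should not be left as a parenthetical. Second, and more substantively, your treatment of condition (3) of Definition \ref{def: N-subring} in the case $P \inter R \neq (0)$ is gestured at rather than closed, and the gesture is not quite right. You say that any $P \in \rAss(T)$ with $P \inter R = \mathfrak{p} = pR$ ``contains a member of $\rAss^{(R)}(T)$ lying over $\mathfrak{p}$''; the fact you actually need (and which the paper proves explicitly in Lemma \ref{lem: adjoin transcendental}) is that $P$ \emph{itself} lies in $\rAss^{(R)}(T)$: since $p \in R$ is regular in $T$ and $p \in PT_P$ with $\dep T_P = 1$, one gets $PT_P \in \Ass(T_P/pT_P)$, hence $P \in \Ass(T/pT) \sub \rAss^{(R)}(T) \sub C$. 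Only with $P \in C$ does the genericity of $a$ give $P \inter R[a] = (P \inter R)R[a] = pR[a]$ and hence $\height(P \inter S) = 1$. As written, the weaker ``contains a member of'' claim would leave a gap, since transcendence modulo a subideal $P' \subn P$ does not by itself control $P \inter R[a]$.

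Finally, two very minor slips: $\abs{S}$ should be bounded by $\sup\set{\aleph_0, \abs{R}}$ rather than asserted equal to $\abs{R}$ (these differ if $R$ is finite, though the needed inequality still holds), and the observation that $M \notin X$ follows from $P_i \sub M$ together with the hypothesis $P_i \nsub Q$ for $Q \in X$, not from $\dep(T) \geq 2$.
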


\begin{lemma}[\cite{Bonat_UFDs}, Lemma 3.1] \label{lem: bonat 3.1}
Let $(T, M)$ be a local catenary ring with $\dep(T) \geq 2$ and let $P_0$ be a minimal prime ideal of T with $n \coloneqq \dim(T/P_0) \geq 3$. Then there are infinitely many prime ideals $Q$ of $T$ satisfying the conditions that $P_0$ is the only minimal prime ideal contained in $Q$, $\dim(T/Q) = 1$, and $Q \nsub P$ for every $P \in \Ass(T) \union \rAss(T)$.
\end{lemma}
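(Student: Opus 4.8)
The plan is to pass to the domain $\bar T := T/P_0$, restate the three required properties of $Q$ there, and then produce the $Q$'s by a single well-chosen cut: intersect with one regular element and then pick a height-one prime of the resulting quotient. First I would set up the reduction. Since $\dep T \geq 2$ we have $M \notin \Ass(T)$, and for every regular $z$ we have $\dep(T/zT) \geq 1$, hence $M \notin \Ass(T/zT)$; so $M \notin \Ass(T) \cup \rAss(T)$. Consequently, for any prime $Q$ with $P_0 \subseteq Q$ and $\dim(T/Q) = 1$, the requirement ``$Q \nsubseteq P$ for all $P \in \Ass(T) \cup \rAss(T)$'' is equivalent to ``$Q \notin \Ass(T) \cup \rAss(T)$'', because the only prime properly containing such a $Q$ is $M$. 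I would also record the sufficient condition that does the real work: if $Q$ contains a regular sequence of length two, then $\dep T_Q \geq 2$, and then (localizing associated primes) $Q \notin \Ass(T)$ and, since $z \in \Ass(T/zT)$ would force $\dep T_Q = 1$, also $Q \notin \rAss(T)$. Finally, writing $\bar P_1, \dots, \bar P_r$ for the images in $\bar T$ of the minimal primes of $T$ other than $P_0$ (each a nonzero ideal of the domain $\bar T$), the condition ``$P_0$ is the only minimal prime of $T$ below $Q$'' becomes ``$\bar P_i \nsubseteq \bar Q$ for every $i$''.

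For the construction, fix $0 \neq w \in \bar P_1 \cdots \bar P_r$ (take $w = 1$ if $r = 0$) and, by prime avoidance, choose a regular element $x_1 \in M$ whose image $\bar x_1 \in \bar T$ lies in no minimal prime of $(w)$; this is possible since $\Ass(T)$ together with the (finitely many) preimages of the minimal primes of $(w)$ is a finite collection of primes properly contained in $M$. Because $\bar T$ is a \emph{catenary} local domain of dimension $n$ and $\bar x_1 \neq 0$, every minimal prime of $(\bar x_1)$ has height one, hence coheight $n - 1 \geq 2$; thus $\bar{\bar T} := T/(P_0 + x_1 T) \cong \bar T/(\bar x_1)$ is equidimensional of dimension $n - 1 \geq 2$, and by the choice of $x_1$ the image of $\bar P_1 \cdots \bar P_r$ in $\bar{\bar T}$ is not contained in at least one minimal prime of $\bar{\bar T}$. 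I would then invoke the elementary fact — proved by induction on dimension, reducing to a minimal prime to land in a domain and then to the two-dimensional case where coheight one equals height one — that a Noetherian local ring which is equidimensional of dimension $\geq 2$ has infinitely many coheight-one primes that do not contain a given ideal not contained in one of its minimal primes. Applied to $\bar{\bar T}$ and that ideal, this produces infinitely many primes $Q$ of $T$ with $P_0 + x_1 T \subseteq Q$, $\dim(T/Q) = 1$, and $\bar P_i \nsubseteq \bar Q$ for all $i$; in particular $P_0$ is the only minimal prime of $T$ below each such $Q$.

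It remains to secure the last property for infinitely many of these $Q$. Discard from this infinite family the finitely many $Q$ that lie in $\Ass(T)$ and the finitely many $Q$ that equal a coheight-one member of $\Ass(T/x_1 T)$; infinitely many remain. For such a $Q$, no member of $\Ass(T/x_1 T)$ contains $Q$ (the coheight-one ones were discarded; a prime of coheight $0$ or $\geq 2$ cannot both be $\supseteq Q$ and lie in $\Ass(T/x_1T)$), so by prime avoidance there is $y \in Q$ regular on $T/x_1 T$, whence $x_1, y$ is a regular sequence in $T$ lying inside $Q$. Then $\dep T_Q \geq 2$, so $Q \notin \Ass(T) \cup \rAss(T)$, and by the first paragraph $Q \nsubseteq P$ for every $P \in \Ass(T) \cup \rAss(T)$. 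These $Q$ have all the asserted properties.

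The step I expect to be the main obstacle is getting the first cut exactly right, namely choosing the single regular element $x_1$ so that simultaneously: (i) $\bar{\bar T} = T/(P_0 + x_1 T)$ stays equidimensional of dimension $\geq 2$ — this is where catenarity of $T$ is indispensable, as it is what forces each minimal prime of $(\bar x_1)$ to have coheight $n-1$; (ii) the obstruction ideal $\bar P_1 \cdots \bar P_r$ survives as a proper ideal in $\bar{\bar T}$, so the ``infinitely many coheight-one primes miss it'' input still applies; and (iii) $x_1$ remains extendable to a length-two regular sequence inside each of the primes $Q$ we build. Reconciling the demand for \emph{infinitely many} $Q$ with the need to avoid the a priori infinite set $\rAss(T)$ is precisely what forces this careful argument instead of a naive finite prime-avoidance count, so I would spend most of the effort there and on the auxiliary ``infinitely many coheight-one primes'' lemma.
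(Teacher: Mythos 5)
The paper only cites this lemma from \cite{Bonat_UFDs} and does not reproduce a proof, so there is no in-paper argument to compare against line by line; the closest in-paper analogue is the proof of Lemma \ref{lem: make the Qs} (a generalization of Lemma 4.4 of \cite{Bonat_UFDs}), together with Lemma \ref{lem: Q's are safe}, and your proposal is recognizably in the same spirit. Your argument is, as far as I can tell, correct. The reduction in your first paragraph (using $\dep T \geq 2$ to eliminate $M$ from $\Ass(T)\cup\rAss(T)$, so that for a coheight-one $Q$ the condition $Q\nsubseteq P$ collapses to $Q\notin\Ass(T)\cup\rAss(T)$, and then using a length-two regular sequence inside $Q$ to secure $\dep T_Q\geq 2$) is exactly the mechanism of Lemma \ref{lem: Q's are safe}. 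Your construction of the coheight-one primes differs in shape from what the paper does in Lemma \ref{lem: make the Qs}: there, one builds a saturated chain $I\subsetneq Q_1\subsetneq\cdots\subsetneq Q_{n-2}\subsetneq Q$ one rung at a time, at each step avoiding the finitely many minimal primes of $Q_k+I'$ to keep the other minimal primes out; you instead make a single well-chosen regular cut $x_1$, pass to the equidimensional catenary ring $T/(P_0+x_1T)$, and invoke a general ``infinitely many coheight-one primes avoid a given ideal'' fact. Both routes use catenarity at the same pressure point (to guarantee that the cut has coheight $n-1$, so the quotient has dimension $n-1\geq 2$ and is equidimensional). The chain-building approach makes the ``only $P_0$ below $Q$'' bookkeeping explicit at each step; your single-cut approach front-loads it into the choice of $x_1$ avoiding $\Min(w)$ and the observation that missing $w$ forces missing every $\bar P_i$, which is a nice simplification.

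One small caution: as stated, your auxiliary ``elementary fact'' (infinitely many coheight-one primes of an equidimensional Noetherian local ring of dimension $\geq 2$ avoid a given ideal not contained in some minimal prime) needs catenarity, or at least more care, for the inductive dimension drop to behave; without it, a height-one prime of $S/\mathfrak{p}_0$ need not have coheight $\dim -1$. This does not affect your application, since $T/(P_0+x_1T)$ is a quotient of the catenary ring $T$ and is therefore catenary, but the auxiliary statement should carry the catenary hypothesis explicitly.
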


The following result from \cite{Bonat_UFDs} is very relevant to us, as it provides sufficient conditions for identifying the existence of an infinitely 1-noncatenary Noetherian UFD whose completion is a specified ring $T$. 
%In Theorem \ref{thm: main infinitely}, we generalize the argument and identify conditions that can be relaxed so as to achieve a more general result.

\begin{theorem}[\cite{Bonat_UFDs}, Theorem 3.2] \label{thm: bonat inf}
Let $(T, M)$ be a complete local ring such that no integer of $T$ is a zero divisor of $T$ and such that $\dep(T) \geq 2$. Let $\set{P_{0,1}, \ldots, P_{0,s}}$ be the minimal prime ideals of T and suppose that for $i = 1, 2, \ldots, s$, we have $\dim(T/P_{0,i}) = n_i \geq 3$. Then there exists a local UFD $(A, M \inter A)$ such that $\widehat{A} \iso T$ and such that, for all $n \in \N$ and for all $i = 1, 2, \ldots, s$, there exist saturated chains of prime ideals $(0) \subn J_n \subn J_{2,n}^{(i)} \subn \cdots \subn J_{n_i - 1,n}^{(i)} \subn M \inter A$ of A satisfying $J_{a,b}^{(i)} = J_{c,d}^{(j)}$ if and only if $i = j$, $a = c$, and $b = d$, and $J_n = J_m$ if and only if $n = m$.
\end{theorem}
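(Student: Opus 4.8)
The plan is to realize $A$ as the union of an ascending chain of $N$-subrings of $T$, starting from the $N$-subring given by the prime subring of $T$ localized at its contraction of $M$ (this is an $N$-subring precisely because no nonzero integer of $T$ is a zerodivisor), and following the method of \cite{Heitmann_UFDs, SMALL_2017, Bonat_UFDs}. Since $T$ is complete it is a homomorphic image of a complete regular local ring and is therefore catenary, so Lemma \ref{lem: bonat 3.1} applies to $T$. For each $i \in \{1, \dots, s\}$ I would use Lemma \ref{lem: bonat 3.1} to fix a countably infinite family $\{Q^{(i)}_k\}_{k \in \N}$ of prime ideals of $T$, each with $\dim(T/Q^{(i)}_k) = 1$, with $P_{0,i}$ its unique minimal prime, and with $Q^{(i)}_k \nsub P$ for every $P \in \Ass(T) \union \rAss(T)$; since $T/P_{0,i}$ is a catenary local domain of dimension $n_i$, each $Q^{(i)}_k$ then has $\height_T Q^{(i)}_k = n_i - 1$. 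Arrange the choices so that all the $Q^{(i)}_k$ are pairwise distinct.

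Now build the chain of $N$-subrings, running through a batch of tasks for each $k \in \N$ and interleaving the standard tasks that force the union $A$ to map onto $T/M^2$ and to satisfy $IT \inter A = I$ for every finitely generated ideal $I$ of $A$ (genericity coming from Lemma \ref{lem: heitmann lemma 2} or Lemma \ref{lem: heitmann lemma 3}, limit stages from Lemma \ref{lem: heitmann lemma 6}; when $T/M$ is uncountable, the completion tasks are deferred to a concluding transfinite run after a countable stage that fixes the prime structure). In the $k$-th batch, with $R$ the current $N$-subring: (i) for each $i$, use Lemma \ref{lem: bonat 2.7} to choose a height-one prime $Q^{(i)}_{1,k} \sub Q^{(i)}_k$ of $T$ avoiding $\Ass(T) \union \rAss^{(R)}(T)$ and distinct from all height-one primes used so far, and complete $Q^{(i)}_{1,k} \subn Q^{(i)}_k$ to a saturated chain $Q^{(i)}_{1,k} \subn Q^{(i)}_{2,k} \subn \cdots \subn Q^{(i)}_{n_i-1,k} = Q^{(i)}_k$ in $T$ whose intermediate terms are chosen (as in \cite{Bonat_UFDs}) pairwise distinct, distinct from every prime used so far, and outside $\Ass(T) \union \rAss(T)$ — possible since in the catenary local domain $T/P_{0,i}$ there are infinitely many primes of each relevant height, so that $\height_T Q^{(i)}_{a,k} = a$, $\coht_T Q^{(i)}_{a,k} = n_i - a$, and $P_{0,i}$ is the unique minimal prime of $Q^{(i)}_{a,k}$; (ii) apply Lemma \ref{lem: bonat 2.8} with $\{P_1,\dots,P_s\} = \{Q^{(1)}_{1,k},\dots,Q^{(s)}_{1,k}\}$ (and with the avoidance set containing the finitely many minimal primes of $\pi_m T$ for all $m < k$) to obtain $\pi_k \in \bigcap_i Q^{(i)}_{1,k}$ together with an $A$-extension $S$ of $R$ in which $\pi_k$ is prime and $Q^{(i)}_{1,k} \inter S = \pi_k S$ for all $i$; (iii) use Lemma \ref{lem: bonat 2.6} to pass to a further $A$-extension containing a generating set for each of $Q^{(i)}_{2,k},\dots,Q^{(i)}_{n_i-1,k}$. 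The union $A$ of all these subrings is a quasi-local UFD in which every $\pi_k$ is prime and which contains a generating set for every $Q^{(i)}_{a,k}$ with $a \ge 2$ (Lemma \ref{lem: heitmann lemma 6}); by Theorem \ref{thm: completion-proving machine}, $A$ is Noetherian with $\widehat A \iso T$, hence a local UFD.

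Set $J_k := \pi_k A$ and, for $2 \le a \le n_i - 1$, $J^{(i)}_{a,k} := Q^{(i)}_{a,k} \inter A$ (with $J^{(i)}_{1,k} := J_k$). Faithful flatness of $A \to T$ gives, by going-down, $\height_A(P \inter A) \le \height_T P$ for $P \in \Spec T$, so $\height_A J_k \le \height_T Q^{(i)}_{1,k} = 1$, and $J_k \ne (0)$, whence $\height_A J_k = 1$ (equivalently, $\pi_k$ is a prime element of the UFD $A$); also $\coht_A J_k = \dim(T/\pi_k T) = \dim T - 1 = \max_j n_j - 1$, since $\pi_k$ is a nonzerodivisor of $T$ in no minimal prime. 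For $a \ge 2$, $A$ contains a generating set for $Q^{(i)}_{a,k}$, so $J^{(i)}_{a,k} T = Q^{(i)}_{a,k}$, giving $\coht_A J^{(i)}_{a,k} = \dim(T/Q^{(i)}_{a,k}) = n_i - a$ and $\height_A J^{(i)}_{a,k} \le a$; and $\pi_k \in Q^{(i)}_{1,k} \sub Q^{(i)}_{a,k}$ gives $J_k \sub J^{(i)}_{a,k}$. The coheights along $(0),\, J_k,\, J^{(i)}_{2,k},\, \dots,\, J^{(i)}_{n_i-1,k},\, M \inter A$ are $\max_j n_j,\, \max_j n_j - 1,\, n_i - 2,\, \dots,\, 1,\, 0$, which is strictly decreasing since $\max_j n_j \ge n_i$; hence the inclusions are all strict, so $\height_A J^{(i)}_{a,k} = a$ for every $a$, and therefore no prime of $A$ can lie strictly between two consecutive terms (the intervening height would have to lie strictly between two consecutive integers, except at the top, where the intervening coheight would have to lie strictly between $1$ and $0$). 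Thus $(0) \subn J_k \subn J^{(i)}_{2,k} \subn \cdots \subn J^{(i)}_{n_i-1,k} \subn M \inter A$ is a saturated chain of length $n_i$. Finally, $J^{(i)}_{a,k} = J^{(j)}_{b,\ell}$ with $a, b \ge 2$ forces $Q^{(i)}_{a,k} = J^{(i)}_{a,k} T = J^{(j)}_{b,\ell} T = Q^{(j)}_{b,\ell}$, hence $(i,a,k) = (j,b,\ell)$; and $J_k = J_\ell$ forces $\pi_k \in \pi_\ell T$, contradicting (for $\ell < k$) that $\pi_k$ avoids the minimal primes of $\pi_\ell T$. So all the claimed properties hold.

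The main obstacle is the bookkeeping of the construction: one must interleave the countably many finite batches (and, for uncountable $T/M$, the concluding transfinite run for the completion conditions) while verifying at every step the hypotheses of Lemmas \ref{lem: bonat 2.6}, \ref{lem: bonat 2.7}, \ref{lem: bonat 2.8} and \ref{lem: heitmann lemma 6} for the current $N$-subring — in particular that each targeted prime lies in none of its associated or $r$-associated primes and in none of the finitely many forbidden primes accumulated so far — and checking that the union remains an $N$-subring. The two genuinely delicate points are (a) producing the chains in $T$ with all the $Q^{(i)}_{a,k}$ simultaneously distinct and (for $a \ge 2$) outside $\Ass(T) \union \rAss(T)$, which requires an adaptation of the argument behind Lemma \ref{lem: bonat 3.1}, and (b) checking that enlarging the countable prime-structure stage of $A$ to a larger ring with completion $T$ preserves both primality of the $\pi_k$ and the equalities $J^{(i)}_{a,k} T = Q^{(i)}_{a,k}$ — both of which are immediate, the first from Lemma \ref{lem: heitmann lemma 6} and the second because containing a generating set for $Q^{(i)}_{a,k}$ is preserved under ring enlargement.
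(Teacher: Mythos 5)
Your approach differs substantively from the one in \cite{Bonat_UFDs} that this paper abstracts into Lemma~\ref{lem: noncatenary-proving machine}. Their construction only places in $A$ generating sets for the coheight-one primes $Q^{(i)}_k$ and arranges that the height-one primes $P^{(i)}_1 \subset Q^{(i)}_k$ contract to the same principal prime $a_k A$; the intermediate chain members of $A$ are then produced \emph{post hoc} inside $Q^{(i)}_k \cap A$ by prime avoidance, and saturation is proved by lifting a hypothetical longer chain via going-down and contradicting catenaricity of $T$. You instead pre-select a full saturated chain $Q^{(i)}_{1,k} \subn \cdots \subn Q^{(i)}_{n_i-1,k}$ in $T$ for each $k$, insist that $A$ contain generating sets for all of them, set $J^{(i)}_{a,k} := Q^{(i)}_{a,k} \cap A$, and deduce saturation from a height/coheight count. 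When the hypotheses can be met, your dimension-counting argument is clean; but it puts a much heavier demand on the construction.

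That demand is where the gap lies. To apply Lemma~\ref{lem: bonat 2.6} (equivalently, Lemma~\ref{lem: generating set for Q} in this paper) and put a generating set for $Q^{(i)}_{a,k}$ into an $N$-subring, you need $Q^{(i)}_{a,k} \nsub P$ for every $P \in \Ass(T) \cup \rAss(T)$. For $\Ass(T)$ this is fine, since $Q^{(i)}_{a,k} \supseteq Q^{(i)}_{1,k}$ contains a regular element. But $\rAss(T)$ can contain primes of arbitrary height (any non-minimal $P$ with $\dep T_P = 1$, which occurs wherever $T$ fails to be Cohen--Macaulay), and there is no reason a height-$a$ prime $Q^{(i)}_{a,k}$ sandwiched between $Q^{(i)}_{1,k}$ and $Q^{(i)}_k$ can be chosen to lie outside all of them. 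Your justification --- ``infinitely many primes of each relevant height'' --- does not address this; $\rAss(T)$ need not be countable, and it need not miss whole height levels. Neither Lemma~\ref{lem: bonat 3.1} nor Lemma~\ref{lem: make the Qs} gives the avoidance for the \emph{intermediate} primes, only for the top coheight-one prime. Without the avoidance, the adjoined generators cannot be made transcendental modulo those $\rAss(T)$-primes, and the $N$-subring condition $\height(P \cap S) \le 1$ for $P \in \rAss(T)$ can fail, since $S$ would then contain a generating set for a prime of $T$ of height $\ge 2$ contained in $P$. Lemma~\ref{lem: noncatenary-proving machine} is designed precisely to sidestep this: it never asks $A$ to generate the intermediate primes of $T$, so the avoidance requirement never arises for them. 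To salvage your route you would either have to prove the intermediate primes can be chosen to avoid $\rAss(T)$ (which I do not believe is true in general) or revert to the post-hoc chain construction.
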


We end this section with a result from \cite{Bonat_UFDs} demonstrating the existence of an everywhere 1-noncatenary local UFD. In Theorem \ref{thm: main everywhere}, we identify a class of everywhere 1-noncatenary local UFDs. 

%conditions on $T$ that suffice to prove a generalized version of the theorem.

\begin{theorem}[\cite{Bonat_UFDs}, Theorem 4.7] \label{thm: bonat everywhere eg}
Let $T \coloneqq \Q[[x,y,z,w,t]]/((x) \inter (y,z))$. There exists an everywhere 1-noncatenary local UFD $A$ such that $\widehat{A} \iso T$.
\end{theorem}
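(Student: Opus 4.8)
The plan is to realize $A$ as the union of a countable ascending chain of countable $N$-subrings of $T$, built by the usual recipe for producing local UFDs with prescribed completion (Definitions~\ref{def: N-subring}--\ref{A_X-extension}, Lemma~\ref{lem: heitmann lemma 6}, Theorem~\ref{thm: completion-proving machine}), but with two extra demands interleaved: that all the subrings are $A_{\{\mathfrak p\}}$-extensions of one another for a fixed coheight-two prime $\mathfrak p$ of $T$, so that $\mathfrak p \inter A = (0)$; and that $A$ contains a generating set of a carefully chosen coheight-one prime of $T$ for every element of every $R_n$. Then $A/J$ will be noncatenary for each height one prime $J$.

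First I would unwind $T$. Since $(x)\inter(y,z)=(xy,xz)$, we have $T=\Q[[x,y,z,w,t]]/(xy,xz)$, which is reduced with $\Ass(T)=\Min(T)=\{P_1,P_2\}$, where $P_1=xT$, $P_2=(y,z)T$, and $T/P_1\iso\Q[[y,z,w,t]]$, $T/P_2\iso\Q[[x,w,t]]$ are regular of dimensions $4$ and $3$; moreover $\dim T=4$, $\dep(T)=3\geq 2$, and no integer of $T$ is a zerodivisor. I will take $\mathfrak p:=(x,y,z)T=P_1+P_2$, a nonmaximal prime of $T$. Maintaining $\mathfrak p\inter R_n=(0)$ forces $P_1\inter R_n=P_2\inter R_n=(0)$ (so condition (2) of Definition~\ref{def: N-subring} holds for free) and, since the preimage of $x\,\Q[[x,w,t]]$ under $T\to T/P_2$ is exactly $\mathfrak p$, it forces the composite $A\hookrightarrow T\to T/P_2\iso\Q[[x,w,t]]$ to carry every nonzero element of $A$ to an element not divisible by $x$. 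This is the crux: every nonzero nonunit $f$ of $A$ then lies in a coheight-one prime $\mathfrak q_f$ of $T$ with $P_2\sub\mathfrak q_f$ and $x\notin\mathfrak q_f$ (namely the preimage of a height-two prime of $\Q[[x,w,t]]$ containing the image of $f$ but not $x$), and such a $\mathfrak q_f$ automatically has $\height_T(\mathfrak q_f)=2$, satisfies $\mathfrak q_f\nsub\mathfrak p$, and satisfies $\mathfrak q_f\nsub P$ for every $P\in\Ass(T)\union\rAss(T)$ — because $T_{\mathfrak q_f}$ is a localization of the regular ring $T/P_2$, hence a $2$-dimensional regular local ring, so $\mathfrak q_f$ cannot be an associated prime of any $T/zT$, while $M\notin\rAss(T)$ as $\dep(T)\geq 2$ and no $P\supsetneq\mathfrak q_f$ of coheight $0$ other than $M$ exists.

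With this in hand I would run the construction. Starting from $R_0=\Q$, build an ascending chain of countable $N$-subrings in which, at cofinally many stages, we perform the standard steps making $R_n\to T/M^2$ onto and $IT\inter R_n=I$ for finitely generated ideals $I$ (using the prime-avoidance Lemmas~\ref{lem: heitmann lemma 2}, \ref{lem: heitmann lemma 3} and Lemma~\ref{lem: jensen} with the prime $\mathfrak p$ contracting to $(0)$, which keep $\mathfrak p\inter R_n=(0)$) together with the steps ensuring the union is a UFD; and, at a set of stages indexed by a diagonalization over $\bigcup_n R_n$, for each nonzero nonunit $f\in R_n$ we fix $\mathfrak q_f$ as above and invoke Lemma~\ref{lem: bonat 2.6} (with $I=\mathfrak p$) to pass to a countable $A_{\{\mathfrak p\}}$-extension containing a generating set of $\mathfrak q_f$. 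By Lemma~\ref{lem: heitmann lemma 6} the union $A=\bigunion_n R_n$ is a quasi-local UFD satisfying the remaining $N$-subring conditions, and by Theorem~\ref{thm: completion-proving machine} it is Noetherian with $\widehat A\iso T$; moreover $\mathfrak p\inter A=(0)$ and $A$ contains a generating set of $\mathfrak q_f$ for every nonzero nonunit $f\in A$.

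Finally I would verify the noncatenary conclusion. Let $f$ be a prime element of $A$; then $f\notin\mathfrak p$, so $f$ is regular in $T$, whence $\dim(A/fA)=\dim(T/fT)=\dim T-1=3$, and since $A/fA$ is a $3$-dimensional local domain it has a saturated chain of length $3$ from $(0)$ to $M\inter A$. On the other hand set $Q_f:=\mathfrak q_f\inter A$; since $A$ contains a generating set of $\mathfrak q_f$ we have $Q_fT=\mathfrak q_f$, so by faithful flatness $\height_A(Q_f)=\height_T(\mathfrak q_f)=2$ and $\dim(A/Q_f)=\dim(T/\mathfrak q_f)=1$. Because $f\in\mathfrak q_f$ we get $fA\subn Q_f$, and a chain $(0)\subn fA\subn P'\subn Q_f$ would force $\height_A(Q_f)\geq 3$, so $Q_f$ covers $fA$; and $\dim(A/Q_f)=1$ forces $M\inter A$ to cover $Q_f$. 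Thus $fA\subn Q_f\subn M\inter A$ is a saturated chain of length $2$, so $A/fA$ has saturated chains of lengths $2$ and $3$ from $(0)$ to its maximal ideal and is noncatenary; as every height one prime of the UFD $A$ is $fA$ for a prime element $f$, $A$ is everywhere $1$-noncatenary. The main obstacle is organizing the recursion so that the three families of demands — the completion/UFD conditions, the equality $\mathfrak p\inter R_n=(0)$, and the adjunction of generating sets of all the $\mathfrak q_f$ — stay simultaneously achievable, i.e. checking that each standard step can be carried out within the class of $A_{\{\mathfrak p\}}$-extensions and that the diagonalization reaches every element of every $R_n$ (equivalently every prime element of $A$, since $A$ is countable); what makes this possible is exactly the structural fact recorded above, that for this $T$ the primes $\mathfrak q_f$ exist, have height precisely $2$, and lie in no member of $\{\mathfrak p\}\union\Ass(T)\union\rAss(T)$.
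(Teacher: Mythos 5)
Your proof is correct, and it takes a genuinely simpler route for this particular $T$ than the paper's general strategy. The paper derives the statement as a special case of Theorem \ref{thm: main everywhere} (see Example \ref{example}), whose engine is Lemma \ref{lem: noncatenary-proving machine}: for every prime element $a$ one builds \emph{two} coheight-one primes $Q_a^{(1)} \supseteq P^{(1)}$ and $Q_a^{(2)} \supseteq P^{(2)}$ of $T$, adjoins generating sets for both, and explicitly exhibits saturated chains of lengths $n_1-1$ and $n_2-1$ in $A/aA$. You instead control the single prime $\mathfrak p = P_1 + P_2$ (which, for this $T$, carries all the information in the paper's set $X = \bigcup \Min(P^{(i)}+P^{(j)})$ once the $N$-subring condition on $\Ass(T)$ is taken into account), construct only the short chain of length $2$ through $\mathfrak q_f \supseteq P_2$, and obtain the long chain for free from $\dim(A/fA) = \dim(T/fT) = \dim T - 1 = 3$; a maximal-length chain in the $3$-dimensional local domain $A/fA$ is automatically saturated. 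This shortcut is valid precisely because $\dim(T/P^{(1)}) = \dim T$ and $\dim(T/P^{(2)}) = 3$, so $\mathfrak q_f$ has height exactly $2$ and the chain $fA \subsetneq Q_f \subsetneq M\cap A$ is saturated; the identity $\height_A(Q_f) = \height_T(\mathfrak q_f)$ is justified because $Q_f T = \mathfrak q_f$ makes $A_{Q_f} \to T_{\mathfrak q_f}$ a faithfully flat local map with zero-dimensional closed fiber (and is also bounded above by going-down and below by $fA \subsetneq Q_f$). Your observation that $\mathfrak p$ is exactly the preimage of $x\Q[[x,w,t]]$, so that $\mathfrak p \cap A = (0)$ forces every nonzero element of $A$ to map to something prime to $x$ in the regular ring $T/P_2$, is a nice structural explanation of why the $\mathfrak q_f$ exist and sit in no member of $\Ass(T) \cup \rAss(T)$. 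The tradeoff is scope: the paper's two-chain argument does not assume any minimal prime has coheight equal to $\dim T$, so it yields the broader Theorem \ref{thm: main everywhere}, while your argument is tailored to $T$'s whose component of maximal dimension supplies the long chain automatically.
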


\section{Infinitely 1-Noncatenary Local UFDs and Their Completions} \label{sec: infinitely}

 The main result of this section is Theorem \ref{thm: main infinitely}, in which we find necessary and sufficient conditions for a complete local ring to be the completion of an infinitely 1-noncatenary local UFD. Of particular interest is that these conditions are the same as those given in Theorem \ref{thm: avery} for when a ring is the completion of a noncatenary local UFD. As such, the important part of the proof is showing that given these known necessary conditions on a complete local ring $T$, it is in fact possible to construct a subring of $T$ with this more pathological prime ideal structure and such that its completion is $T$.

To do this, we closely follow the path laid out in the proof of Theorem \ref{thm: bonat inf} in \cite{Bonat_UFDs}. In the first part of this proof, the authors begin with a complete local ring $T$ and construct a particular subring $A$ of $T$ whose completion is $T$. Then, in the second part, they exploit facts about $A$ attained via the construction to verify that for infinitely many height one prime ideals $J$ of $A$, there are saturated chains of prime ideals of $A$ of different lengths starting at $J$ and ending at the maximal ideal of $A$. 
The ability to verify that these chains exist given appropriate conditions on $A$ and $J$ has general applicability (we use it again in Section \ref{sec: everywhere}), and so we break this argument out as a separate lemma and present it first as Lemma \ref{lem: noncatenary-proving machine}.

To prove Lemma \ref{lem: noncatenary-proving machine}, we begin with a complete local ring $(T,M)$ and a subring $A$ of $T$ where both $T$ and $A$ satisfy specific properties. In particular, $T$ has two saturated chains of prime ideals, each with a coheight one prime ideal that contains exactly one minimal prime ideal. Furthermore, $A$ contains generating sets for both of these coheight one prime ideals of $T$, and the intersection of the height one prime ideals from both chains with $A$ yield the same prime ideal of $A$, which we call $J$. We then construct two saturated chains of prime ideals in $A$ from $J$ to $M \inter A$, where the first is obtained by intersecting prime ideals from the first chain in $T$ with $A$, and the second is obtained by intersecting prime ideals from the second chain in $T$ with $A$. 
%If the lengths of the chains in $T$ are not equal, then neither are the lengths of the chains from $J$ to $M \inter A$, which shows us that $A/J$ is noncatenary.

Before we state and prove Lemma \ref{lem: noncatenary-proving machine}, we present a preliminary result that we use in the proof of Lemma \ref{lem: noncatenary-proving machine}.

\begin{lemma} \label{lem: gen set dim}
Let $A$ be a local ring with $\widehat{A} = T$. Let $Q$ be a prime ideal of $T$ and suppose that $A$ contains a generating set for $Q$. Then \[\dim\paren{\frac{A}{Q \inter A}} = \dim\paren{\frac{T}{Q}}.\]
\end{lemma}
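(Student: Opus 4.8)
The plan is to exploit the fact that $A$ contains a generating set for $Q$, so that $Q = (Q \inter A)T$, and then relate the dimensions of $A/(Q\inter A)$ and $T/Q$ by passing to completions. First I would set $P \coloneqq Q \inter A$, a prime ideal of $A$, and observe that $Q = PT$ since $A$ contains a generating set for $Q$. Consequently $T/Q = T/PT$ is a quotient of $T = \widehat{A}$ by an ideal extended from $A$, and in fact $T/PT$ is the $(M/P)(A/P)$-adic completion of $A/P$. The key ring-theoretic input is that for a local (Noetherian) ring, passing to the completion preserves Krull dimension: $\dim(A/P) = \dim(\widehat{A/P})$. So the crux is to identify $\widehat{A/P}$ with $T/Q$.

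The main step is therefore to verify that $\widehat{A/P} \iso T/PT = T/Q$. Here one uses that completion is exact on finitely generated modules over a Noetherian local ring, or more directly the standard fact that if $(A,\mathfrak{n})$ is a Noetherian local ring with completion $T = \widehat{A}$ and $I$ is an ideal of $A$, then $\widehat{A/I} \iso T/IT$ (the completion of $A/I$ with respect to $\mathfrak{n}(A/I)$). Applying this with $I = P$ gives $\widehat{A/P} \iso T/PT = T/Q$, where the last equality is exactly the hypothesis that $A$ contains a generating set for $Q$. I should also note that $A$ being local (hence Noetherian, by the paper's conventions) with $\widehat{A} = T$ ensures $T$ is Noetherian and that $A/P$ is a Noetherian local ring to which the dimension-preservation-under-completion result applies.

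Putting the chain of equalities together:
\[
\dim\paren{\frac{A}{Q \inter A}} = \dim\paren{\frac{A}{P}} = \dim\paren{\widehat{\frac{A}{P}}} = \dim\paren{\frac{T}{PT}} = \dim\paren{\frac{T}{Q}},
\]
which is the desired conclusion.

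I expect the only real subtlety to be bookkeeping about which topology each completion is taken with respect to, and making sure the isomorphism $\widehat{A/P} \iso T/PT$ is invoked correctly; the dimension-preservation fact for Noetherian local rings and their completions is completely standard. There is no genuine obstacle here — the lemma is essentially a packaging of the hypothesis "$A$ contains a generating set for $Q$" (so that $Q \inter A$ generates $Q$) together with two textbook facts about completions, and the proof is short.
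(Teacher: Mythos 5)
Your proof is correct and takes essentially the same route as the paper: both reduce to the isomorphism $\widehat{A/(Q\inter A)} \iso T/(Q\inter A)T = T/Q$ obtained from the hypothesis that $A$ contains a generating set for $Q$, and then apply the fact that Krull dimension is preserved under completion of a Noetherian local ring.
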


\begin{proof}
As $A$ contains a generating set for $Q$, we know that $Q = (Q \inter A)T$. Since $T$ is the completion of $A$, we have
$$\widehat{\frac{A}{Q \cap A}}\cong \frac{T}{(Q \cap A)T} = \frac{T}{Q}.$$
It follows that \[\dim\paren{\frac{A}{Q \inter A}} = \dim\paren{\frac{T}{(Q \inter A)T}} = \dim\paren{\frac{T}{Q}},\] and so the lemma holds.
\end{proof}

\begin{lemma} \label{lem: noncatenary-proving machine}
Let $(T, M)$ be a complete local ring satisfying the following conditions:
\begin{enumerate}
    \item There exist distinct $P^{(1)},P^{(2)} \in \Min(T)$ with $n_1 = \dim(T/P^{(1)}) \geq 3$ and $n_2 = \dim(T/P^{(2)}) \geq 3$,
    \item There exist $Q^{(1)},Q^{(2)} \in \Spec(T)$ with $\dim(T/Q^{(1)}) = \dim(T/Q^{(2)}) = 1$, and with $P^{(1)}$ the only minimal prime ideal of $T$ contained in $Q^{(1)}$ and $P^{(2)}$ the only minimal prime ideal of $T$ contained in $Q^{(2)}$,
    \item There exist $P^{(1)}_1,P^{(2)}_1 \in \Spec(T)$ such that $P^{(1)} \subn P^{(1)}_1 \subn Q^{(1)}$, $P^{(2)} \subn P^{(2)}_1 \subn Q^{(2)}$, and $\height(P^{(1)}_1/P^{(1)}) = \height(P^{(2)}_1/P^{(2)}) = 1$, 
    %\todo{just say height of the ideal?}
    % $P^{(1)} \subn Q^{(1)}$, and $P^{(2)} \subn Q^{(2)}$,
    % $P^{(2)} \nsub Q^{(1)}$, and $P^{(1)} \nsub Q^{(2)}$ \note{(I think you can maybe get away with something like $Q^{(1)} \neq Q^{(2)}$ but this guarantees disjoint chains)},
    % and $Q^{(1)}$ and $Q^{(2)}$ each contain just one minimal prime ideal of $T$,
    \item There is a local subring $(A, M \inter A)$ of $T$ that contains generating sets for $Q^{(1)}$ and $Q^{(2)}$,
    and is such that $\widehat{A} \iso T$,
    and
    \item There is $J \in \Spec(A) \setminus \Min(A)$
    % , and $P^{(1)}_1,P^{(2)}_1 \in \Spec(T)$ with $P^{(1)} \subn P^{(1)}_1 \subn Q^{(1)}$, $P^{(2)} \subn P^{(2)}_1 \subn Q^{(2)}$, and $\height(P^{(1)}_1/P^{(1)}) = \height(P^{(2)}_1/P^{(2)}) = 1$,
    % and $\dim(T/P^{(1)}_0) \neq \dim(T/P^{(2)}_0)$,
    such that $J = P^{(1)}_1 \inter A = P^{(2)}_1 \inter A$.
\end{enumerate}
Then there exist saturated chains of prime ideals of $A$, $J \subn J^{(1)}_2 \subn \cdots \subn J^{(1)}_{n_1 - 1} \subn M \inter A$ and $J \subn J^{(2)}_2 \subn \cdots \subn J^{(2)}_{n_2 - 1} \subn M \inter A$ satisfying $J^{(i)}_a = J^{(j)}_b$ if and only if $i = j$ and $a = b$. Moreover, $Q^{(1)} \cap A = J^{(1)}_{n_1 - 1}$ and $Q^{(2)} \cap A = J^{(2)}_{n_2 - 1}$.
%Moreover, if $\dim(T/P^{(1)}) \neq \dim(T/P^{(2)})$, then $A/J$ is noncatenary.
\end{lemma}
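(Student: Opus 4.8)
The plan is to build the two chains in $A$ by intersecting down from chains in $T$, using the hypotheses to control both the lengths and the disjointness. First I would construct the chains in $T$. Since $P^{(1)}$ is the only minimal prime of $T$ contained in $Q^{(1)}$ and $\dim(T/Q^{(1)}) = 1$ while $\dim(T/P^{(1)}) = n_1$, the ring $T/P^{(1)}$ is a complete local domain, hence catenary, and every saturated chain from $P^{(1)}$ to $Q^{(1)}$ has length $n_1 - 1$. Using condition (3) to start such a chain through $P^{(1)}_1$, and then choosing a maximal chain from $Q^{(1)}$ to $M$ (which has length $1$), I obtain a saturated chain $P^{(1)} \subn P^{(1)}_1 \subn P^{(1)}_2 \subn \cdots \subn P^{(1)}_{n_1 - 1} = Q^{(1)} \subn M$ in $T$ of total length $n_1$; similarly a chain $P^{(2)} \subn P^{(2)}_1 \subn \cdots \subn P^{(2)}_{n_2 - 1} = Q^{(2)} \subn M$ of total length $n_2$.

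Next I would contract to $A$: set $J^{(i)}_a \coloneqq P^{(i)}_a \inter A$ for $2 \le a \le n_i - 1$, so in particular $J^{(i)}_{n_i - 1} = Q^{(i)} \inter A$, and note $J = P^{(i)}_1 \inter A$ by condition (5). I must show each contracted chain $J \subn J^{(i)}_2 \subn \cdots \subn J^{(i)}_{n_i - 1} \subn M \inter A$ is a genuine saturated chain of the stated length. The key tool is Lemma \ref{lem: gen set dim}: since $A$ contains a generating set for $Q^{(i)}$, we get $\dim(A/(Q^{(i)} \inter A)) = \dim(T/Q^{(i)}) = 1$, so the segment from $Q^{(i)} \inter A$ to $M \inter A$ is saturated of length $1$, and moreover $Q^{(i)} \inter A$ has coheight $1$. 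For the lower part, I would argue that the chain $J \subn J^{(i)}_2 \subn \cdots \subn Q^{(i)} \inter A$ in $A$ cannot be refined: going up from $A$ to $T$ (lying-over/going-up hold since $T$ is integral—indeed faithfully flat—over $A$... more carefully, since $\widehat{A}=T$ is faithfully flat over $A$, contraction of primes is well-behaved and the fiber over each $J^{(i)}_a$ is controlled), any strict refinement in $A$ between $J^{(i)}_a$ and $J^{(i)}_{a+1}$ would, after extension and using that $P^{(i)}_a T = J^{(i)}_a$-related... Actually the cleanest route: $\dim(A/J) \ge n_i - 1$ for each $i$ because the contracted chain has that many steps down to $M\cap A$, and conversely a chain in $A$ from $J$ to $M \cap A$ of length $> n_i$ would lift (via going-up for the flat extension $A \to T$, since $M\cap A$ contracts from $M$ and each prime from some prime of $T$) to a chain in $T$ from $P^{(i)}_1$ to $M$ of length $> n_i - 1$, contradicting catenarity of $T/P^{(i)}$; combined with the explicit contracted chain realizing length $n_i$ from $J$ (through $J$ up one more step would be $n_i$ total... here being careful that $J = P^{(i)}_1\cap A$ has coheight exactly $n_i-1$). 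Saturation follows since any gap could be filled in $T$ and pushed down. I would spell this out by showing $\operatorname{ht}(J^{(i)}_{a+1}/J^{(i)}_a) = 1$ directly: passing to $A/J^{(i)}_a$ and its completion $T/P^{(i)}_a T$, apply Lemma \ref{lem: gen set dim} iteratively (or use that generating sets for the $Q^{(i)}$ force the relevant coheights, and coheights add up correctly along a chain whose endpoints have known coheight in both $A$ and $T$).

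Finally, for the disjointness condition $J^{(i)}_a = J^{(j)}_b \iff i = j,\ a = b$: within a single chain this is immediate from strict inclusions. Across the two chains, suppose $J^{(1)}_a = J^{(2)}_b$ with $(i=1,a) \ne (j=2,b)$. Both equal a fixed prime $\mathfrak q$ of $A$ with $J \subn \mathfrak q \subn M\cap A$; we have $\operatorname{coht}_A(\mathfrak q) = n_1 - a$ computed along the first chain and $= n_2 - b$ along the second. I would instead use the top anchors: $J^{(1)}_{n_1-1} = Q^{(1)}\cap A$ and $J^{(2)}_{n_2-1} = Q^{(2)}\cap A$, and $Q^{(1)} \ne Q^{(2)}$ since they contain different minimal primes $P^{(1)} \ne P^{(2)}$ of $T$; if $Q^{(1)}\cap A = Q^{(2)}\cap A$ then extending back to $T$ and using that $A$ contains generating sets for both would force $Q^{(1)} = (Q^{(1)}\cap A)T = (Q^{(2)}\cap A)T = Q^{(2)}$, a contradiction. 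More generally $J^{(i)}_a T \sub P^{(i)}_a$ and $P^{(i)}_a$ lies over $P^{(i)}$ among the minimal primes... — the point is that a coincidence $J^{(1)}_a = J^{(2)}_b$ at a strictly lower level, combined with saturation, would propagate upward to force $Q^{(1)}\cap A = Q^{(2)}\cap A$, which we have just excluded; I would formalize this by noting the chains above a common prime are determined, or argue at each level that $J^{(i)}_a$ determines $P^{(i)}$ (the unique minimal prime of $T$ below $Q^{(i)}$) via which minimal prime of $A$ sits below $J^{(i)}_a$. The main obstacle I anticipate is the rigorous verification of \emph{saturation} of the contracted chains — i.e., that no new primes of $A$ sneak in between consecutive $J^{(i)}_a$'s — since contraction along a flat (non-integral) extension does not automatically preserve saturation; the resolution should be to compute coheights exactly using Lemma \ref{lem: gen set dim} at the top and a dimension count forcing each step to have height exactly one, rather than trying to lift arbitrary refinements directly.
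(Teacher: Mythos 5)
Your overall plan — construct chains in $T$ and contract to $A$, with the $Q^{(i)}\cap A$ as anchors — is the right shape, and the paper proceeds similarly, but there is a genuine gap in how you get the contracted chain to actually be a chain. If you take an \emph{arbitrary} saturated chain $P^{(i)}_1 \subn P^{(i)}_2 \subn \cdots \subn Q^{(i)}$ in $T$ and set $J^{(i)}_a := P^{(i)}_a \cap A$, there is no reason the inclusions $J^{(i)}_a \subseteq J^{(i)}_{a+1}$ should be strict. The extension $A \hookrightarrow T = \widehat{A}$ is faithfully flat but not integral; its fibers can be positive-dimensional, so consecutive primes of $T$ can easily contract to the same prime of $A$. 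Your own appeal to Lemma~\ref{lem: gen set dim} controls the coheights only at $Q^{(i)}\cap A$ (where $A$ contains a generating set), not at the intermediate $P^{(i)}_a$, for which $A$ contains no generating set. The dimension count you sketch (``the contracted chain has that many steps'') presupposes strictness and so is circular. The paper avoids this entirely by \emph{not} taking an arbitrary chain in $T$: it builds each $P^{(i)}_{t}$ as a minimal prime of $(j^{(i)}_{t-1}) + P^{(i)}_{t-1}$ contained in $Q^{(i)}$, where $j^{(i)}_{t-1}$ is chosen in $Q^{(i)}\cap A$ but \emph{not} in $J^{(i)}_{t-1}$. This guarantees $j^{(i)}_{t-1} \in J^{(i)}_t \setminus J^{(i)}_{t-1}$, so strictness is free, and the principal ideal theorem in $T/P^{(i)}_{t-1}$ gives saturation of $P^{(i)}_{t-1} \subn P^{(i)}_t$. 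Saturation of $J^{(i)}_{t-1}\subn J^{(i)}_t$ then follows by going-down plus catenarity of $T$, exactly the tool you gesture at but in the downward (not upward) direction.

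The second gap is the disjointness argument. Saying a coincidence $J^{(1)}_a = J^{(2)}_b$ would ``propagate upward to force $Q^{(1)}\cap A = Q^{(2)}\cap A$'' is not justified: the segment of a chain above a given prime is not determined by that prime, so there is no propagation mechanism. The paper instead chooses the very first element $j^{(1)}_1$ by prime avoidance to lie in $Q^{(1)}\cap A$ but outside both $J$ and $Q^{(2)}\cap A$ (after first verifying that $Q^{(1)}\cap A \nsub J$, $Q^{(1)}\cap A \nsub Q^{(2)}\cap A$, and symmetrically). Then $j^{(1)}_1 \in J^{(1)}_2 \subseteq J^{(1)}_a$ for all $a\ge 2$, while $J^{(2)}_b \subseteq Q^{(2)}\cap A$ excludes $j^{(1)}_1$, so the chains can share no interior prime. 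You should adopt something like this witness-element approach; without it the disjointness claim is not established. In short: the fix for both gaps is to choose the $T$-chain so that each step is forced through a new element of $A$, which simultaneously gives strictness, enables the going-down/catenarity saturation argument, and produces the separating element for disjointness.
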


\begin{proof}
%Let $n_1 \coloneqq \dim(T/P^{(1)})$ and $n_2 \coloneqq \dim(T/P^{(2)})$. We will show that there are two disjoint and saturated chains of prime ideals in $A$ that start at $J$ and end at $M \inter A$: the first will contain $Q^{(1)} \inter A$ and have length $n_1 - 1$, and the second will contain $Q^{(2)} \inter A$ and have length $n_2 - 1$. In the case where $n_1 \neq n_2$, this will demonstrate that $A/J$ is noncatenary.
First note that since $P_1^{(1)} \subseteq Q^{(1)}$ and $P^{(1)}$ is the only minimal prime ideal of $T$ contained in $Q^{(1)}$, we have that $P^{(1)}$ is the only minimal prime ideal of $T$ contained in $P_1^{(1)}$. Since $\height(P^{(1)}_1/P^{(1)})= 1 $, we have ht$(P_1^{(1)}) = 1$. It follows that ht$(J) = \mbox{ht}(P^{(1)} \cap A) \leq 1$ and since $J$ is not a minimal prime ideal of $A$, ht$(J) = 1$. Now, by Lemma \ref{lem: gen set dim}, the coheight of $Q^{(i)} \cap A$ is one for $i = 1,2.$

%At the outset, we will verify two facts about prime ideals in $A$. First, we claim that $\height(J) = 1$ in $A$. Let $I$ be a prime ideal of $A$ properly contained in $J$. As $T$ is a faithfully flat extension of $A$, the going-down property holds, and so there is some $P_I \in \Spec(T)$ such that $P_I \subn P^{(1)}_1$ and $I = P_I \inter A$. But $P^{(1)}_1$ is height 1 in $T$, so $P_I$ must be a minimal prime ideal of $T$. Then $I$ must be a minimal prime ideal of $A$, and so the first claim is verified. Note that this means $P^{(i)} \inter A \subn J$ is saturated in $A$ for $i \in \set{1,2}$. Second, we claim that $Q^{(i)} \inter A$ is a prime ideal of $A$ with coheight 1. As $A$ contains a generating set for $Q^{(i)}$, this follows directly from Lemma \ref{lem: gen set dim}.

Suppose that $Q^{(1)} \inter A \sub J$. Then $J = P^{(1)}_1 \inter A \sub Q^{(1)} \inter A$, and so $Q^{(1)} \inter A = P^{(1)}_1 \inter A$. Since $A$ contains a generating set for $Q^{(1)}$, we have \[Q^{(1)} = (Q^{(1)} \inter A)T = (P^{(1)}_1 \inter A)T \sub P^{(1)}_1,\]  contradicting that $P^{(1)}_1 \neq Q^{(1)}$. Thus, $Q^{(1)} \inter A \nsub J$. Similarly, $Q^{(2)} \inter A \nsub J$.
Next, suppose that $Q^{(1)} \inter A \sub Q^{(2)} \inter A$. Then $(Q^{(1)} \inter A)T \sub (Q^{(2)} \inter A)T$, and so $Q^{(1)} \sub Q^{(2)}$, a contradiction. Thus, $Q^{(1)} \inter A \nsub Q^{(2)} \inter A$. Similarly, $Q^{(2)} \inter A \nsub Q^{(1)} \inter A$.

We now begin to build our desired chains. By the prime avoidance lemma, there exists $j^{(1)}_1 \in (Q^{(1)} \inter A) \setminus (J \union (Q^{(2)} \inter A))$. Let $P^{(1)}_2$ be a minimal prime ideal of $(j^{(1)}_1) + P^{(1)}_1$ contained in $Q^{(1)}$.  In the ring $T/P^{(1)}_1$, $(j^{(1)}_1 + P^{(1)}_1)/P^{(1)}_1$ is principal, so by the principal ideal theorem, $P^{(1)}_1 \subn P^{(1)}_2$ is saturated.
Let $J^{(1)}_2 \coloneqq P^{(1)}_2 \inter A$. We show that $J \subn J^{(1)}_2$ is saturated. Suppose for contradiction that there exists $J' \in \Spec(A)$ such that $J \subn J' \subn J^{(1)}_2$. Since $T$ is a faithfully flat extension of $A$, the going-down property holds, and so there exists $P', P'' \in \Spec(T)$ with $P'' \subn P' \subn P^{(1)}_2$, $P' \inter A = J'$, and $P'' \inter A = J$. Since $Q^{(1)}$ contains only one minimal prime ideal, $P^{(1)} \sub P''$. As $J$ is not a minimal prime ideal of $A$, we have $P^{(1)} \subn P''$. Hence $P^{(1)} \subn P'' \subn P' \subn P^{(1)}_2$. Since $T$ is a complete local ring, it is catenary. Now $P^{(1)} \subn P^{(1)}_1 \subn P^{(1)}_2$ is saturated, violating that $T$ is catenary. It follows that $J \subn J^{(1)}_2$ is saturated. We follow a similar process to find $J^{(2)}_2$, where $J \subn J^{(2)}_2$ is also saturated.

%For $i \in \set{1,2}$, notice that $J^{(i)}_2 = Q^{(i)} \inter A$ if and only if $n_i = 3$. After all, 

If for $i \in \{1,2\}$, we have $n_i = 3$, then $P^{(i)}_2 = Q^{(i)}$, and so $J^{(i)}_2 = P^{(i)}_2 \inter A = Q^{(i)} \inter A$. 
%By Lemma \ref{lem: gen set dim}, it follows that \[3 = \dim\paren{\frac{A}{P^{(i)} \inter A}} = \dim\paren{\frac{T}{P^{(i)}}} = n_i.\] 
Thus, when $n_i = 3$, we have successfully constructed a saturated chain $J \subn J^{(i)}_2 \subn M \inter A$. 
We now demonstrate an inductive approach in the case where $n_i > 3$. For concreteness, suppose $i = 1$. 
%We find $P^{(1)}_t$ and $J^{(1)}_t$ inductively for $t \geq 3$. 
Suppose that $t \geq 3$ and that for $j = 1,2, \ldots ,t-1$, $P^{(1)}_{j}$ is a prime ideal of $T$ such that $P^{(1)}_{t-1} \subn Q^{(1)}$ and $P^{(1)} \subn P^{(1)}_{1} \subn \cdots \subn P^{(1)}_{t-1}$ is saturated. Moreover, suppose that for $j = 1,2, \ldots ,t-1,$
$J^{(1)}_{j} = P^{(1)}_{j} \inter A$ and $ J \subn J^{(1)}_{2} \subn \cdots \subn J^{(1)}_{t-1}$ is saturated.
%and that $J^{(1)}_{t-1} \coloneqq P^{(1)}_{t-1} \inter A$ is a prime ideal of $A$ satisfying $J^{(1)}_2 \sub J^{(1)}_{t-1} \sub Q^{(1)} \inter A$.
Since $P^{(1)}_{t-1} \subn Q^{(1)}$,  dim$(T/P^{(1)}_{t-1}) \geq 2$. If $J^{(1)}_{t-1} = P^{(1)}_{t-1} \inter A = Q^{(1)} \inter A$, then
$$\mbox{dim}\left(\frac{A}{Q^{(1)} \cap A}\right) = \mbox{dim}\left(\frac{A}{P^{(1)}_{t-1} \cap A}\right) = \mbox{dim}\left(\frac{T}{(P^{(1)}_{t-1} \cap A)T}\right) \geq \mbox{dim}\left(\frac{T}{P^{(1)}_{t-1}}\right) \geq 2,$$
a contradiction. It follows that $J^{(1)}_{t-1} = P^{(1)}_{t-1} \inter A \subn Q^{(1)} \inter A$.
Choose $j^{(1)}_{t-1} \in Q^{(1)} \cap A$ such that $j^{(1)}_{t-1} \not\in J^{(1)}_{t-1}$, and choose $P^{(1)}_t$ to be a minimal prime ideal of $(j^{(1)}_{t-1}) + P^{(1)}_{t-1}$ contained in $Q^{(1)}$. Then $P^{(1)}_{t-1} \subn P^{(1)}_t$ is saturated. We set $J^{(1)}_t \coloneqq P^{(1)}_t \inter A$.
Now suppose that $J^{(1)}_{t-1} \subn J^{(1)}_t$ is not saturated. Then there is a prime ideal $J'$ of $A$ such that $J^{(1)}_{t-1} \subn J' \subn J^{(1)}_t$. By the going-down property there are prime ideals $P_1, P_2, \ldots, P_{t - 1}$, and $P'$ of $T$ satisfying $P_1 \subseteq P_2 \subseteq \cdots \subseteq P_{t-1} \subseteq P' \subseteq P^{(1)}_t$, $P_1 \cap A = J$, $P' \cap A = J'$, and $P_j \cap A = J_j^{(1)}$ for $j = 2,3, \ldots, t-1$. Note that $P^{(1)} \subn P_1$, and so we have $$P^{(1)} \subn P_1 \subn P_2 \subn \cdots \subn P_{t-1} \subn P' \subn P^{(1)}_t.$$ Now $$P^{(1)} \subn P^{(1)}_{1} \subn \cdots \subn P^{(1)}_{t-1} \subn P^{(1)}_{t}$$ is a saturated chain of prime ideals of $T$, violating that $T$ is catenary.  It follows that $J^{(1)}_{t-1} \subn J^{(1)}_t$ is saturated.

%and follow the same argument presented above to see that $J^{(1)}_{t-1} \subn J^{(1)}_t$ is saturated as well. \textcolor{red}{Go over why this is the same argument}

As we continue this inductive process, there will be some $\ell \in \N$ for which $P^{(1)}_{\ell-1} \subn Q^{(1)}$ and $P^{(1)}_\ell = Q^{(1)}$, at which point we stop. By construction, the chain $P^{(1)}_1 \subn P^{(1)}_2 \subn \cdots \subn P^{(1)}_\ell \subn M$ is saturated. As $T$ is catenary, $\ell = n_1 - 1$. Also, by construction, we conclude that $J \subn J^{(1)}_2 \subn \cdots \subn J^{(1)}_{n_1 - 1} \subn M \inter A$ is saturated. The same argument applies for $i = 2$. 
%This means that there are two saturated chains from $J$ to $M \inter A$ with respective lengths $n_1 - 1$ and $n_2 - 1$. 
%If $n_1 \neq n_2$, we infer that $A/J$ is noncatenary.

We now verify that no prime ideal appears in both chains, apart from those at the endpoints. Suppose for contradiction that $J^{(1)}_a = J^{(2)}_b$, for $2 \leq a \leq n_1 - 1$ and $2 \leq b \leq n_2 - 1$. Then \[j^{(1)}_1 \in J^{(1)}_2 \sub J^{(1)}_a = J^{(2)}_b \sub Q^{(2)},\] which means $j^{(1)}_1 \in Q^{(2)} \inter A$, but this contradicts our choice of $j^{(1)}_1$. By construction, $J^{(i)}_a \neq J^{(i)}_b$ for all $a \neq b$, so we have shown that $J^{(i)}_a = J^{(j)}_b$ if and only if $i = j$ and $a = b$. Also by construction, $Q^{(1)} \cap A = J^{(1)}_{n_1 - 1}$ and $Q^{(2)} \cap A = J^{(2)}_{n_2 - 1}$.
\end{proof}

We are now in a position to prove our first main result, Theorem \ref{thm: main infinitely}, in which we find necessary and sufficient conditions for a complete local ring to be the completion of an infinitely 1-noncatenary local UFD. That the conditions are necessary is immediate from Theorem \ref{thm: avery}, so the bulk of the proof entails following the argument given in the first half of the proof of Theorem \ref{thm: bonat inf} in \cite{Bonat_UFDs} to show that the conditions are sufficient. Note that, to use Theorem \ref{thm: bonat inf}, the minimal prime ideals of the complete local ring $T$ must all have coheight at least three. As our complete local ring $T$ does not satisfy that condition, we cannot simply apply Theorem \ref{thm: bonat inf}.

The idea of the proof of Theorem \ref{thm: main infinitely} is to set up a situation where we can apply Lemma \ref{lem: noncatenary-proving machine} infinitely many times.
 Given two minimal prime ideals of the complete local ring $(T,M)$, we first find infinitely many coheight one prime ideals of $T$ and then we find infinitely many height one prime ideals of $T$. We do this in a way so that we have pairs of coheight one prime ideals along with pairs of height one prime ideals that satisfy the first three conditions of Lemma \ref{lem: noncatenary-proving machine}.

It then remains to find our subring $A$ for the application of Lemma \ref{lem: noncatenary-proving machine}. To do this, we first construct an $N$-subring that contains a generating set for each of the coheight one prime ideals. Second, we construct an $A$-extension of that $N$-subring that ensures that the first pair of height one prime ideals intersects to the same prime ideal in the subring, and then we construct an $A$-extension of that $N$-subring that ensures the same property for the second pair of height one prime ideals, and so on. Third, we argue that the union of these $N$-subrings is again an $N$-subring. Finally, we construct $A$ as an extension of this infinite union, which can be shown to be a local UFD whose completion is $T$. The proof of the theorem is completed by appealing to Lemma \ref{lem: noncatenary-proving machine} infinitely many times.

\begin{theorem} \label{thm: main infinitely}
Let $(T, M)$ be a complete local ring. Then $T$ is the completion of an infinitely 1-noncatenary local UFD $(A, M \inter A)$ if and only if the following conditions hold:
\begin{enumerate}
    \item No integer of $T$ is a zero divisor,
    \item $\dep(T) \geq 2$, and
    \item There exists $P \in \Min(T)$ such that $2 < \dim(T/P) < \dim(T)$.
\end{enumerate}
\end{theorem}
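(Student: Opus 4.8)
The plan is to prove both directions, with the forward direction being immediate and the reverse direction being the real content.

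\medskip

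\noindent\textbf{Necessity.} Suppose $T$ is the completion of an infinitely 1-noncatenary local UFD $(A, M \inter A)$. Since $A$ is infinitely 1-noncatenary, it is in particular noncatenary: for any fixed $k$, the two saturated chains from $J_k$ to $M \inter A$ of lengths $n$ and $m$ (with $n \neq m$) together with a saturated chain from $(0)$ to $J_k$ (of length one, as $J_k$ has height one) witness failure of catenarity. Hence $T = \widehat{A}$ is the completion of a noncatenary local UFD, and Theorem \ref{thm: avery} gives exactly conditions (1), (2), and (3).

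\medskip

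\noindent\textbf{Sufficiency.} This is the substantive direction, and I would follow the blueprint used in the proof of Theorem \ref{thm: bonat inf} in \cite{Bonat_UFDs}, replacing the final packaging step with repeated appeals to Lemma \ref{lem: noncatenary-proving machine}. First, observe that since $\dim(T) = n$ for some $n \geq 4$ (by (3), since $2 < \dim(T/P) < \dim(T)$ forces $\dim(T) \geq 4$), and $T$ has a minimal prime $P$ with $2 < \dim(T/P)$. I would set $P^{(1)} = P^{(2)} \coloneqq P$, so $n_1 = n_2 = \dim(T/P) \geq 3$; the conditions of Lemma \ref{lem: noncatenary-proving machine} permit $P^{(1)} = P^{(2)}$ so long as the coheight-one and height-one primes are chosen distinct. (Here the subtlety is that Theorem \ref{thm: bonat inf} as stated requires \emph{all} minimal primes of $T$ to have coheight $\geq 3$, which need not hold; using a single minimal prime $P$ twice circumvents this.) Next, I would use Lemma \ref{lem: bonat 3.1} applied to $T$ modulo the nilradical, or directly, to extract a countably infinite family of coheight-one primes $\{Q_k^{(1)}, Q_k^{(2)}\}_{k \in \N}$ of $T$, pairwise distinct, each containing $P$ as its unique minimal prime, and each avoiding every associated prime in $\Ass(T) \union \rAss(T)$. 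Then for each such $Q$, I would apply Lemma \ref{lem: bonat 2.7} to produce a height-one prime $P_1^{(i)}$ of $T$ with $P \subn P_1^{(i)} \subn Q_k^{(i)}$, avoiding $\Ass(T) \union \rAss^{(R)}(T)$, and distinct from all previously chosen height-one primes; the catenarity of $T$ guarantees $\height(P_1^{(i)}/P) = 1$. This arranges conditions (1)--(3) of Lemma \ref{lem: noncatenary-proving machine} for every $k$ simultaneously.

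\medskip

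\noindent\textbf{Building the subring.} Starting from the prime subring of $T$ localized appropriately (a countable $N$-subring $R_0$ by the depth hypothesis and (1), which ensures no nonzero integer is a zerodivisor), I would build an ascending chain of countable $N$-subrings: use Lemma \ref{lem: bonat 2.6} to adjoin, one family at a time, generating sets for all the coheight-one primes $Q_k^{(i)}$ (this is where $Q_k^{(i)} \nsub I$-type hypotheses get checked); then, processing $k = 1, 2, 3, \ldots$ in turn, use Lemma \ref{lem: bonat 2.8} to produce an $A$-extension in which $P_{1,k}^{(1)}$ and $P_{1,k}^{(2)}$ both intersect the ring in the \emph{same} principal ideal $a_k S$ (apply it with $s = 2$, $P_1 = P_{1,k}^{(1)}$, $P_2 = P_{1,k}^{(2)}$, and $X$ a finite set of other height-one primes to keep intersections distinct across different $k$). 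I would take the union over this countable chain; by Lemma \ref{lem: heitmann lemma 6} the union is an $N$-subring (the cardinality condition is fine since everything is countable). Then I would pass to a UFD whose completion is $T$: run the standard "closing up" construction (iterating $A$-extensions to achieve $R \to T/M^2$ onto and $IT \inter R = I$ for all finitely generated $I$, à la Heitmann, preserving the property that each $P_{1,k}^{(i)} \inter A = J_k$ via $A_X$-extensions) and invoke Theorem \ref{thm: completion-proving machine} to conclude $A$ is Noetherian with $\widehat{A} \iso T$; being a directed union of UFDs in which primes stay prime, $A$ is a UFD. Crucially, one must verify $J_k \coloneqq P_{1,k}^{(1)} \inter A = P_{1,k}^{(2)} \inter A$ is not a minimal prime of $A$ (it has height one by going-down and the fact that $P$ meets $A$ in $(0)$) and that the $J_k$ are pairwise distinct (ensured because distinct $a_k$'s were chosen outside each other's primes). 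Finally, apply Lemma \ref{lem: noncatenary-proving machine} with this single $A$ for each $k \in \N$: it yields, for every $k$, saturated chains $J_k \subn J_{k,2}^{(1)} \subn \cdots \subn J_{k,n_1-1}^{(1)} \subn M \inter A$ and $J_k \subn J_{k,2}^{(2)} \subn \cdots \subn J_{k,n_2-1}^{(2)} \subn M \inter A$ with the desired distinctness within each $k$. The remaining bookkeeping — that primes from the chains for different values of $k$ are also distinct, matching condition (3) of Definition \ref{infinitely1noncatenary} — follows because $J_{k,a}^{(i)}$ lies in $Q_k^{(i)} \inter A$, and the $Q_k^{(i)} \inter A$ are pairwise distinct since the $Q_k^{(i)}$ are and each contains a generating set; I would note that $n_1 = n_2 = \dim(T/P)$ here, but since we only need \emph{some} $n \neq m$ across the whole family, we instead take, say, the first chain with $P^{(1)} = P$ of coheight $\dim(T/P) \eqqcolon d \geq 3$ and a second chain through a different construction giving a chain of length $\dim(T) - 1 > d - 1$; concretely, by (3) there is also a saturated chain in $T$ from $P$ to $M$ routed through a minimal prime of maximal coheight, so reusing Lemma \ref{lem: noncatenary-proving machine}'s mechanism we obtain the length mismatch $n = \dim(T) \neq d = m$. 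The main obstacle I anticipate is precisely this last point: arranging two chains of \emph{genuinely different} lengths out of (possibly) a single minimal prime $P$, which requires either choosing $P^{(1)}, P^{(2)}$ to be distinct minimal primes of differing coheight when available, or — when $T$ has a unique suitable minimal prime — exploiting that $\dim(T) > \dim(T/P)$ to route one chain through $M$ via a fatter minimal prime, and verifying carefully that both routes survive the subring construction with all the primes along them meeting $A$ in the prescribed way.
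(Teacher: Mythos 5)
The bulk of your construction is correct and closely mirrors the paper's route: adjoin generating sets for coheight-one primes via Lemma \ref{lem: bonat 2.6}, produce height-one primes $P^{(i)}_n$ via Lemma \ref{lem: bonat 2.7}, force $P^{(1)}_n \inter R_n = P^{(2)}_n \inter R_n = a_nR_n$ via Lemma \ref{lem: bonat 2.8}, take the union via Lemma \ref{lem: heitmann lemma 6}, close up using Heitmann's Theorem 8 argument and Theorem \ref{thm: completion-proving machine}, and finish by invoking Lemma \ref{lem: noncatenary-proving machine} for each $n$. However, your very first step --- setting $P^{(1)} = P^{(2)} \coloneqq P$ --- does not work. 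Lemma \ref{lem: noncatenary-proving machine} explicitly requires $P^{(1)}$ and $P^{(2)}$ to be \emph{distinct} minimal primes, and, more importantly, with $P^{(1)} = P^{(2)}$ you get $n_1 = n_2 = \dim(T/P)$, so the two chains the lemma produces have the \emph{same} length and you never obtain the ``$n \neq m$'' demanded by Definition \ref{infinitely1noncatenary}. You flag this at the end as the main anticipated obstacle, but the fallback you sketch (routing one chain ``through $M$ via a fatter minimal prime'') is not coherent: a saturated chain starting at the fixed minimal prime $P$ of the catenary ring $T$ has length exactly $\dim(T/P)$ and cannot pass through a different minimal prime of $T$.

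The resolution is forced by condition (3) itself and there is no ``when available'' caveat. Since $\dim(T/P) < \dim(T)$ and $\dim(T) = \max_{Q \in \Min(T)} \dim(T/Q)$, there exists a minimal prime $P^{(2)}$ with $\dim(T/P^{(2)}) = \dim(T) > \dim(T/P) > 2$; in particular $P^{(2)} \neq P$, $\dim(T/P^{(2)}) \geq 4$, both coheights are at least $3$, and they differ. This is exactly the observation the paper makes at the start of its sufficiency argument, after which the construction you outline goes through (also: apply Lemma \ref{lem: bonat 3.1} directly to $T$ at each of $P^{(1)}$ and $P^{(2)}$ --- not to ``$T$ modulo the nilradical,'' which would change $\Ass(T) \union \rAss(T)$ and does not help here). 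With that correction your proposal coincides with the paper's proof.
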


\begin{proof}
Let $(T, M)$ be a complete local ring that is the completion of an infinitely 1-noncatenary local UFD $A$. Then $A$ is noncatenary, and so by Theorem \ref{thm: avery} the three conditions hold.

Now assume that $(T, M)$ is a complete local ring that satisfies our three conditions. The third condition guarantees that we have two minimal prime ideals $P^{(1)}$ and $P^{(2)}$ of $T$ satisfying dim$(T/P^{(1)}) \geq 3$, dim$(T/P^{(2)}) \geq 3$, and dim$(T/P^{(1)}) \neq \mbox{dim}(T/P^{(2)})$.

We now adapt the proof of Theorem \ref{thm: bonat inf}. By applying Lemma \ref{lem: bonat 3.1} to $P^{(1)}$, we find infinitely many coheight one prime ideals $\{Q_n^{(1)}\}_{n \in \N}$ of $T$ that contain $P^{(1)}$ (and which contain no other minimal prime ideals of $T$) such that, for each $n \in \N$, $Q^{(1)}_n \nsub P$ for all $P \in \Ass(T) \union \rAss(T)$. We find a similar collection $\{Q_n^{(2)}\}_{n \in \N}$ of coheight one prime ideals of $T$ containing $P^{(2)}$.

Let $\Pi$ be the prime subring of $T$, and set $R \coloneqq \Pi_{(M \inter \Pi)}$. Then $R$ is a countable $N$-subring of $T$. By Lemma \ref{lem: bonat 2.6} with $I \in \Ass(T)$, there exists a countable $A$-extension $R_0$ of $R$ such that, for every $n \in \N$, $R_0$ contains a generating set for $Q_n^{(1)}$ and for $Q_n^{(2)}$.

For $i \in \set{1,2}$, apply Lemma \ref{lem: bonat 2.7} with $Q = Q_1^{(i)}$ and $X$ the empty set to identify a height one prime ideal $P_1^{(i)}$ of $T$ such that $P^{(i)} \sub P_1^{(i)} \sub Q_1^{(i)}$ and $P_1^{(i)} \nsub P$ for all $P \in \Ass(T) \union \rAss^{(R_0)}(T)$. Then, by Lemma \ref{lem: bonat 2.8} (with $X$ empty) there exists $a_1 \in P_1^{(1)} \inter P_1^{(2)}$ such that $R_1 \coloneqq R_0[a_1]_{(M \inter R_0[a_1])}$ is an $A$-extension of $R_0$ satisfying $P_1^{(1)} \inter R_1 = a_1 R_1$ and $P_1^{(2)} \inter R_1 = a_1 R_1$. Note that $a_1$ is a prime element of $R_1$ and since $R_0$ is countable, $R_1$ is countable.
%As $a_1 R_1$ is a prime ideal of $R_1$, a domain, note that $a_1$ must be a prime element of $R_1$.

We now construct $R_n$ inductively for $n > 1$. Suppose that for $k < n$, $R_k$ is a countable N-subring of $T$, $P_k^{(1)}$ and $P_{k}^{(2)}$ are height one prime ideals of $T$ such that, for $i \in \set{1,2}$, we have $P^{(i)} \sub P_{k}^{(i)} \sub Q_{k}^{(i)}$, $P_{k}^{(i)} \nsub Q_j^{(1)}$ for all $j < k$, $P_{k}^{(i)} \nsub Q_j^{(2)}$ for all $j < k$, and $P_{k}^{(i)} \nsub P$ for all $P \in \Ass(T) \union \rAss^{(R_k)}(T)$. Suppose also that for $k < n$ there exists $a_{k} \in P_{k}^{(1)} \inter P_{k}^{(2)}$ satisfying $a_{k} \notin Q_j^{(1)}$ for all $j < k$ and $a_{k} \notin Q_j^{(2)}$ for all $j < k$. Moreoever, suppose that $P_{k}^{(1)} \inter R_{k} = a_{k} R_{k}$, and $P_{k}^{(2)} \inter R_{k} = a_{k} R_{k}$. Note that $a_k$ is a prime element of $R_k$.

Here, we make the inductive step. For each $i \in \set{1,2}$, apply Lemma \ref{lem: bonat 2.7} with $Q = Q_n^{(i)}$ and $X = \{Q_j^{(i)} \mid i \in \set{1,2}, 1 \leq j < n\}$ to find a height one prime ideal $P_n^{(i)}$ of $T$ such that $P^{(i)} \sub P_n^{(i)} \sub Q_n^{(i)}$, and $P_n^{(i)} \nsub P$ for all $P \in \Ass(T) \union \rAss^{(R_{n-1})}(T)$. Furthermore, the lemma guarantees that $P_n^{(i)} \nsub Q_j^{(1)}$ for all $j < n$ and $P_n^{(i)} \nsub Q_j^{(2)}$ for all $j < n$. We next use Lemma \ref{lem: bonat 2.8} with $X = \{Q_j^{(i)} \mid i \in \set{1,2}, 1 \leq j < n\}$, to find $a_n \in P_n^{(1)} \inter P_n^{(2)}$ with $a_n \notin \bigunion_{Q \in X} Q$ such that $R_n \coloneqq R_{n-1}[a_n]_{(M \inter R_{n-1}[a_n])}$ is an $A$-extension of $R_{n-1}$ that satisfies $P_n^{(1)} \inter R_n = a_n R_n$ and $P_n^{(2)} \inter R_n = a_n R_n$. Note that $a_n$ is a prime element of $R_n$ and $R_n$ is countable.

Let $S \coloneqq \bigunion_{n=1}^\infty R_n$. By Lemma \ref{lem: heitmann lemma 6}, $S$ is a countable $N$-subring of $T$, and $a_n$ is a prime element of $S$ for all $n \in \N$. Now follow the proof of Theorem 8 from \cite{Heitmann_UFDs}, replacing $R_0$ in that argument with $S$, to construct a local UFD $(A, M \inter A)$ whose completion is $T$. In particular, $S \sub A$, and prime elements of $S$ are prime in $A$. As $S$ contains a generating set for $Q_n^{(i)}$ for all $n \in \N$, and for $i \in \set{1,2}$, $A$ will as well. Also, for every $n \in \N$, $a_n$ is a prime element of $A$ and so $P_n^{(1)} \inter A = a_n A$, and $P_n^{(2)} \inter A  = a_n A$. Let $J_n \coloneqq a_n A$, and note that $J_n = J_m$ if and only if $n = m$, as $a_n$ satisfies $a_n \not\in Q_j^{(1)}$ and $a_n \not\in Q_j^{(2)}$ for all $j < n$.
The result now follows from Lemma \ref{lem: noncatenary-proving machine}.
%To complete the proof, we must show that $A/J_n$ is noncatenary for all $n \in \N$. As $A$ is a UFD, its only minimal prime ideal is the zero ideal, and so no $J_n$ is a minimal prime ideal of $A$. Fix $n$, and apply Lemma \ref{lem: noncatenary-proving machine} with $J = J_n$, $Q^{(i)} = Q^{(i)}_n$, and $P^{(i)}_1 = P^{(i)}_n$ (for $i \in \set{1,2}$). This confirms that $A/J_n$ is noncatenary, and so $A$ is infinitely 1-noncatenary.
\end{proof}

\section{Everywhere 1-Noncatenary Local UFDs and Their Completions} \label{sec: everywhere}

We now turn our attention to everywhere 1-noncatenary local UFDs. Note that the idea of everywhere 1-noncatenary local UFDs generalizes the notion of infinitely 1-noncatenary local UFDs in that the noncatenarity condition must hold at every height 1 prime ideal instead of merely at infinitely many of them. 
%As was the case in Section \ref{sec: infinitely}, we attempt to characterize when a ring is the completion of a ring with this property, and in doing so we immediately inherit the necessary conditions from Theorem \ref{thm: avery}. 
In Theorem \ref{thm: main everywhere}, we present sufficient conditions on a complete local ring $T$ to ensure that it contains a local subring $A$ whose completion is $T$ and that is an everywhere 1-noncatenary UFD. Our construction of $A$ is based on the construction used in \cite{Bonat_UFDs} for the proof of Theorem \ref{thm: bonat everywhere eg}.

$N$-subrings will play a major role in our construction. We begin with two general-purpose lemmas about $N$-subrings. In the first of these, Lemma \ref{lem: adjoin transcendental}, we start with an $N$-subring that avoids a finite collection $X$ of nonmaximal prime ideals, and we show that there is a larger $N$-subring that still avoids the ideals in $X$. This lemma is different from Lemma 11 from \cite{Loepp_97} in that the collection $X$ is allowed to contain more than one element, and that we verify that the prime elements of $R$ remain prime in $S$. The proof of Lemma \ref{lem: adjoin transcendental} is the same as the proof of Lemma 11 in \cite{Loepp_97} with the exception of a few minor modifications. For the reader's convenience, we provide the proof, which in places follows verbatim the proof of Lemma 11 in \cite{Loepp_97}.

% \subsection{For General Use}

\begin{lemma} \label{lem: adjoin transcendental}
Let $(T, M)$ be a complete local ring, let $(R, M \inter R)$ be an $N$-subring of $T$, and let $X$ be a finite set of nonmaximal prime ideals of $T$ with $X_i \inter R = (0)$ for all $X_i \in X$. Suppose $C$ is a collection of nonmaximal prime ideals of $T$ with \[X \union \Ass(T) \union \rAss^{(R)}(T) \sub C.\] Let $x \in T$ be such that for every $P \in C$, $x \notin P$ and $x + P$ is transcendental over $R/(P \inter R)$ as an element of $T/P$. Then $(S, M \inter S) \coloneqq R[x]_{(M \inter R[x])}$ is an infinite $A_X$-extension of $R$.
\end{lemma}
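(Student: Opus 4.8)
The goal is to show that $S = R[x]_{(M \cap R[x])}$, built by adjoining a single element $x$ that is transcendental modulo every prime in a suitably large collection $C$, is again an $N$-subring, is an $A$-extension of $R$, has $X_i \cap S = (0)$ for all $X_i \in X$, and is infinite. The proof should follow the pattern of Lemma 11 in \cite{Loepp_97}, so I would organize it around verifying, one at a time, the defining properties of Definition \ref{def: N-subring}, Definition \ref{def: A-extension}, and Definition \ref{A_X-extension}, and then noting infiniteness.

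\emph{First steps (easy properties).} Since $R$ is a UFD and $x$ is transcendental over $R$ (it is transcendental mod some $P \in C$, hence over $R$ itself), $R[x]$ is a polynomial ring over a UFD, hence a UFD, and localizing preserves the UFD property; so $S$ is a quasi-local UFD with maximal ideal $M \cap S$. The cardinality condition is immediate: $\abs{S} = \sup\{\aleph_0, \abs{R}\}$, which also gives infiniteness (indeed $S \supseteq R[x]$ is infinite because $x$ is transcendental), and the ``equality only if $T/M$ countable'' clause is inherited from $R$. For property (2) of Definition \ref{def: N-subring}, I must check $Q \cap S = (0)$ for every $Q \in \Ass(T)$: since $\Ass(T) \subseteq C$, any $f \in Q \cap S$, written as a fraction of a polynomial in $x$ over $R$, would give a polynomial relation for $x + Q$ over $R/(Q \cap R) = R/(0)$ in $T/Q$, contradicting transcendence of $x + Q$ — so $Q \cap S = (0)$. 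The same argument with $X_i \in X \subseteq C$ (and $X_i \cap R = (0)$) gives $X_i \cap S = (0)$, which is the $A_X$-extension condition. And prime elements of $R$ remain prime in $S = R[x]_{(M \cap R[x])}$: a prime $p$ of the UFD $R$ stays prime in the polynomial ring $R[x]$ (standard, since $R[x]/pR[x] = (R/pR)[x]$ is a domain) and stays prime under localization, giving the $A$-extension requirement.

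\emph{Main obstacle: property (3) of the $N$-subring definition.} The delicate point is showing that if $t \in T$ is regular and $P \in \Ass(T/tT)$, then $\height(P \cap S) \leq 1$. Here is where the hypothesis $\rAss^{(R)}(T) \subseteq C$ does its work. Fix such a $t$ and $P$; note $P \in \rAss^{(R)}(T)$ only when $t$ can be taken in $R$, so one must be careful, but the strategy is: suppose $\height(P \cap S) \geq 2$ and derive a contradiction. Since $R$ is an $N$-subring, $\height(P \cap R) \leq 1$. The extension $R \subseteq S$ adjoins one transcendental, so intuitively heights can go up by at most one; the rigorous argument (mirroring \cite{Loepp_97}) is to take a chain $(0) \subsetneq \mathfrak{q}_1 \subsetneq \mathfrak{q}_2 \subseteq P \cap S$ in $S$, contract to $R$, and analyze the two cases according to whether $\mathfrak{q}_1 \cap R = \mathfrak{q}_2 \cap R$. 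In the case where the contractions coincide, one uses that fibers of $R \hookrightarrow R[x]$ over a prime of $R$ have dimension $\leq 1$ (the fiber is a localization of a polynomial ring in one variable over a field), forcing a collapse. In the case where they differ, one gets $\height(P \cap R) \geq 2$, contradicting that $R$ is an $N$-subring — \emph{unless} $t \notin R$, in which case one must instead invoke $\Ass(T) \cup \rAss^{(R)}(T) \subseteq C$ together with the transcendence of $x$ to rule out the bad configuration; this is exactly the subtlety that makes the hypothesis on $C$ indispensable and is the step I expect to require the most care.

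\emph{Wrap-up.} Having verified the three conditions of Definition \ref{def: N-subring}, the two extra conditions of Definition \ref{def: A-extension} (containment $R \subseteq S$, prime elements preserved, cardinality bound), and the extra condition $X_i \cap S = (0)$ of Definition \ref{A_X-extension}, together with $\abs{S}$ being infinite, I conclude that $S$ is an infinite $A_X$-extension of $R$. Throughout, the only genuinely new content relative to \cite{Loepp_97} is (a) handling a set $X$ with more than one element — which costs nothing, since every argument is applied to each $X_i$ separately and $X \subseteq C$ — and (b) the explicit check that prime elements of $R$ stay prime in $S$, which follows from the polynomial-ring and localization facts noted above.
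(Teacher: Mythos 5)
Your handling of the easy properties (UFD, cardinality, infiniteness, the $A_X$ condition, preservation of primes) matches the paper and is fine. The gap is in property (3) of Definition \ref{def: N-subring}, which you correctly identify as the hard part but do not actually close.

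Two concrete problems with the chain-contraction sketch. First, your dichotomy ($\mathfrak{q}_1 \cap R = \mathfrak{q}_2 \cap R$ versus not) does not exhaust the difficulties: if $\mathfrak{q}_1 \cap R = (0) \subn \mathfrak{q}_2 \cap R = P \cap R$, the contractions differ, yet you only get a chain of length one below $P\cap R$, so no contradiction with $\height(P \cap R) \le 1$; and if $\mathfrak{q}_1$ and $\mathfrak{q}_2$ both lie over a \emph{nonzero} prime $p = P \cap R$, the fiber-dimension argument only forces $\mathfrak{q}_1 = pR[x]$, which is not yet a contradiction --- one still has $(0) \subn pR[x] \subn \mathfrak{q}_2 \sub P \cap R[x]$, so $\height(P\cap R[x])$ could a priori be $2$. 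Second, your caveat ``unless $t \notin R$'' misreads Definition \ref{def: N-subring}: property (3) there is quantified over \emph{all} regular $t \in T$, so $\height(P \cap R) \ge 2$ is always a contradiction with $R$ being an $N$-subring, regardless of where $t$ lives. The real subtlety, which you gesture at but do not execute, is a different one: to use transcendence on $P \cap R[x]$ you need $P \in C$, and since $\rAss^{(R)}(T)$ only records associated primes of $T/zT$ for $z \in R$, you must manufacture such a $z$. The paper's key move is exactly this: in the $\height(P\cap R) = 1$ case write $P \cap R = aR$ with $a \in R$ regular in $T$, localize at $P$, use $\dep(T_P) = 1$ to show $PT_P \in \Ass(T_P/aT_P)$, hence $P \in \Ass(T/aT) \subseteq \rAss^{(R)}(T) \subseteq C$, and then transcendence of $x + P$ forces $P \cap R[x] = aR[x]$, which has height $1$. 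The $\height(P\cap R)=0$ case is handled directly by the $\dim k[x] \le 1$ observation, with no chain needed. Without the ``$P \in \Ass(T/aT)$ for $a \in R$'' step, the argument does not go through.
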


\begin{proof}
We first show that if $P \in C$ with $P \inter R = (0)$, then $P \inter S = (0)$. It suffices to show $P \inter R[x] = (0)$. Let $f(x) = a_nx^n + \cdots + a_1x + a_0 \in P$ with $a_j \in R$ for $j = 1,2, \ldots , n$. Then $f(x) \equiv 0 \pmod{P}$. But, as $P \in C$, $x + P$ is transcendental over $R/(P \inter R)$. So, $f(x)$ is the zero polynomial (mod $P$). Hence, $a_j \in P$ for every $j = 1,2, \ldots ,n$, and we have $a_j \in P \inter R = (0)$ for every $j = 1,2, \ldots ,n$. Therefore, $f(x)$ is the zero polynomial, and we have $P \inter S = (0)$. It follows that $X_i \inter S = (0)$ for all $X_i \in X$ and $Q \inter S = (0)$ for $Q \in \Ass(T)$. Note that since $x$ is transcendental over $R$, we have that the prime elements of $R$ are prime in $S$, that $S$ is a UFD, and that $\abs{S} = \sup\set{\aleph_0, \abs{R}}$. 

To show that $S$ is an $N$-subring of $T$, it only remains to show that if $z \in T$ is regular and $P \in \Ass(T/zT)$, then $\height(P \inter S) \leq 1$. It suffices to show $\height(P \inter R[x]) \leq 1$. As $R$ is an $N$-subring, $\height(P \inter R) \leq 1$. Suppose $\height(P \inter R) = 0$. Then, since $R$ is a domain, $P \inter R = (0)$. It follows that in the ring $R[x]_{(P \inter R[x])}$, all elements of $R$ except $0$ have been inverted. So, $R[x]_{(P \inter R[x])} = k[x]$ localized at some set, where $k$ is a field. But, $\dim(k[x]) \leq 1$ and it follows that $\dim(R[x]_{(P \inter R[x])}) \leq 1$. So, $\height(P \inter R[x]) \leq 1$. Now, suppose $\height(P \inter R) = 1$. Then $P \inter R = aR$ for some $a \in P \inter R$. By Condition 2 of being an $N$-subring and the fact that $a \in R$, $a$ is not a zero divisor in $T$. Since $P \in \Ass(T/zT)$, we have $PT_P \in \Ass(T_P/zT_P)$. As $\dep(T_P) = 1$, and $a$ is regular, the ring $T_P/aT_P$ consists only of zero divisors and units. Hence, $PT_P \in \Ass(T_P/aT_P)$. It follows that $P \in \Ass(T/aT)$ and so $P \in C$. Now, if $g(x) \in P \inter R[x]$, then $g(x) = b_mx^m + \cdots + b_1x + b_0 \in P$ with $b_j \in R$. By the transcendental property of $x + P$, $b_j \in P \inter R = aR$. Hence, $g(x) \in aR[x]$, and so $\height(P \inter R[x]) = 1$. This shows that Condition 3 of $N$-subrings is satisfied and so $S$ is an $N$-subring of $T$. Thus, $S$ is an infinite $A_X$-extension of $R$.
\end{proof} 
%\todo{(more explanation on last step?)}

We now present the second general-purpose result for $N$-subrings. Lemma \ref{lem: big union lemma} is very useful, as it tells us important information about unions of ascending chains of $N$-subrings. 
%Even when we take the union of uncountably many $N$-subrings, it turns out that we can still recover useful information about them by leveraging transfinite induction.
This result generalizes Lemma 3.6 in \cite{Loepp_Simpson} (which generalizes Lemma \ref{lem: heitmann lemma 6}) by allowing the set of prime ideals avoided by all the subrings to contain more than one element. Note that $\g(\a)$ was defined in Definition \ref{def: gamma}.

\begin{lemma} \label{lem: big union lemma}
Let $(T, M)$ be a complete local ring, let $(R_0, M \inter R_0)$ be an $N$-subring of $T$, and let $X$ be a finite subset of $\Spec(T)$ such that $X_i \inter R_0 = (0)$ for all $X_i \in X$. Let $\O$ be a well-ordered index set with least element 0 such that either $\O$ is countable, or $\abs{\set{\b \in \O \mid \b < \a}} < \abs{T/M}$ for every $\a \in \O$. Suppose further that $\set{(R_\a, M \inter R_\a)}_{\a \in \O}$ is an ascending collection of rings such that if $\g(\a) = \a$ then $R_\a = \bigunion_{\b < \a} R_\b$, while if $\g(\a) < \a$ then $R_\a$ is an $A_X$-extension of $R_{\g(\a)}$. Then $(S, M \inter S) \coloneqq \bigunion_{\a \in \O} R_\a$ satisfies all conditions to be an $N$-subring of $T$ except possibly the cardinality condition, but it satisfies $\abs{S} \leq \sup\set{\aleph_0, \abs{R_0}, \abs{\O}}$. Moreover, elements that are prime in some $R_\a$ are prime in $S$, and $X_i \inter S = (0)$ for all $X_i \in X$.
\end{lemma}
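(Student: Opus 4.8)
The plan is to reduce everything to Lemma~\ref{lem: heitmann lemma 6} (Heitmann's Lemma 6), which already handles the $N$-subring conditions and the cardinality bound. The only genuinely new content is tracking the extra property $X_i \inter S = (0)$ for all $X_i \in X$ through the transfinite union, so I would organize the proof around that. First I would observe that the hypotheses of Lemma~\ref{lem: heitmann lemma 6} are satisfied: indeed, an $A_X$-extension is in particular an $A$-extension (by Definition~\ref{A_X-extension}), so the collection $\set{R_\a}_{\a \in \O}$ meets the requirement that $R_\a$ is an $A$-extension of $R_{\g(\a)}$ whenever $\g(\a) < \a$, and the requirement $R_\a = \bigunion_{\b < \a} R_\b$ at limit stages is given directly. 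Applying Lemma~\ref{lem: heitmann lemma 6} therefore gives immediately that $(S, M \inter S) \coloneqq \bigunion_{\a \in \O} R_\a$ satisfies all the $N$-subring conditions except possibly the cardinality condition, that it satisfies $\abs{S} \leq \sup\set{\aleph_0, \abs{R_0}, \abs{\O}}$, and that elements prime in some $R_\a$ are prime in $S$.

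It then remains only to verify $X_i \inter S = (0)$ for each $X_i \in X$. This is a routine union-of-a-chain argument: fix $X_i \in X$ and let $t \in X_i \inter S$. Since $S = \bigunion_{\a \in \O} R_\a$ and the collection is ascending, $t \in R_\a$ for some $\a \in \O$, hence $t \in X_i \inter R_\a$. The point is that $X_i \inter R_\a = (0)$ for every $\a$, which I would prove by transfinite induction on $\a$: the base case $\a = 0$ is the hypothesis $X_i \inter R_0 = (0)$; if $\g(\a) = \a$ (limit stage), then $R_\a = \bigunion_{\b < \a} R_\b$ and any element of $X_i \inter R_\a$ lies in some $X_i \inter R_\b$ with $\b < \a$, which is $(0)$ by the inductive hypothesis; and if $\g(\a) < \a$, then $R_\a$ is an $A_X$-extension of $R_{\g(\a)}$, so $X_i \inter R_\a = (0)$ by Definition~\ref{A_X-extension} directly (this even bypasses needing the inductive hypothesis at successor stages, though the hypothesis $X_i \inter R_{\g(\a)} = (0)$ is what makes "$A_X$-extension of $R_{\g(\a)}$" a sensible assertion). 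Hence $t = 0$, and $X_i \inter S = (0)$.

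I do not expect any real obstacle here; the lemma is essentially a bookkeeping strengthening of Lemma~\ref{lem: heitmann lemma 6}, and the only thing to be careful about is that the proof genuinely needs only the ascending property of the chain together with the base-case hypothesis and the definition of $A_X$-extension — in particular it does not require $\O$ to be countable or the cardinality bound on initial segments for the $X_i \inter S = (0)$ part. One minor point worth stating explicitly is that $X$ being finite is not actually used in this argument (the conclusion holds coordinatewise in $X_i$), but since the statement is phrased with $X$ finite to match its intended applications, I would simply carry the hypothesis along without comment. I would write the whole proof in a few lines, citing Lemma~\ref{lem: heitmann lemma 6} for the bulk and giving the short transfinite induction for the new clause.

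\begin{proof}
Since an $A_X$-extension is in particular an $A$-extension, the collection $\set{R_\a}_{\a \in \O}$ satisfies the hypotheses of Lemma \ref{lem: heitmann lemma 6}. Applying that lemma, $(S, M \inter S) \coloneqq \bigunion_{\a \in \O} R_\a$ satisfies all conditions to be an $N$-subring of $T$ except possibly the cardinality condition, it satisfies $\abs{S} \leq \sup\set{\aleph_0, \abs{R_0}, \abs{\O}}$, and elements that are prime in some $R_\a$ are prime in $S$.

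It remains to show that $X_i \inter S = (0)$ for all $X_i \in X$. Fix $X_i \in X$. We first show, by transfinite induction on $\a \in \O$, that $X_i \inter R_\a = (0)$. For $\a = 0$, this is the hypothesis $X_i \inter R_0 = (0)$. Suppose now $\a > 0$ and that $X_i \inter R_\b = (0)$ for all $\b < \a$. If $\g(\a) = \a$, then $R_\a = \bigunion_{\b < \a} R_\b$; given $t \in X_i \inter R_\a$, the collection being ascending yields $\b < \a$ with $t \in R_\b$, so $t \in X_i \inter R_\b = (0)$ and hence $t = 0$. If $\g(\a) < \a$, then $R_\a$ is an $A_X$-extension of $R_{\g(\a)}$, so $X_i \inter R_\a = (0)$ by definition. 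This completes the induction.

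Finally, let $t \in X_i \inter S$. Since $S = \bigunion_{\a \in \O} R_\a$ is an ascending union, $t \in R_\a$ for some $\a \in \O$, whence $t \in X_i \inter R_\a = (0)$, so $t = 0$. Therefore $X_i \inter S = (0)$, as desired.
\end{proof}
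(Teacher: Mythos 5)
Your proof is correct and is essentially the same as the paper's: both defer to Lemma~\ref{lem: heitmann lemma 6} for all claims except $X_i \inter S = (0)$, and both establish that last claim by transfinite induction on $\a$, splitting into the limit case $\g(\a) = \a$ (where the ascending-union structure and the inductive hypothesis apply) and the successor case $\g(\a) < \a$ (where the definition of $A_X$-extension gives the result directly). Your side remark that finiteness of $X$ plays no role here is also accurate.
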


\begin{proof}
All claims follow from Lemma \ref{lem: heitmann lemma 6} except that $X_i \inter S = (0)$ for all $X_i \in X$. It suffices to show that $X_i \inter R_\a = (0)$ for all $X_i \in X$ and all $\a \in \O$. We are given that $X_i \inter R_0 = (0)$ for all $X_i \in X$. For some $\a \in \O$, suppose that for all $\b < \a$, $X_i \inter R_\b = (0)$ for all $X_i \in X$. We consider two cases. If $\g(\a) < \a$, we are done, as we have assumed that $X_i \inter R_{\g(\a)} = (0)$ for all $X_i \in X$, and we know that $R_\a$ is an $A_X$-extension of $R_{\g(\a)}$. On the other hand, if $\g(\a) = \a$, then $R_\a = \bigunion_{\b < \a} R_\b$. Using our assumption about each $R_\b$, we conclude that $X_i \inter R_\a = (0)$ for all $X_i \in X$ in this case as well.
\end{proof}

%\subsection{Relevant Lemmas, Part 1} %\label{subsec: R tower}

%The collection of lemmas presented in this subsection serve to ensure that the conditions for Lemma \ref{lem: noncatenary-proving machine} are in place for the final subring $A$ of $T$. Specifically, the application of these lemmas will form one of the two processes that are alternated between in the proof of Theorem \ref{thm: main everywhere}. However, we proceed somewhat differently than in the proof of Theorem \ref{thm: main infinitely}, which involved first finding a countably infinite collection of coheight 1 prime ideals of $T$ and then identifying appropriate height 1 prime ideals of $T$ with known intersections in the subring $A$. At a high level, in this construction we will build an extension of a subring $A'$ by first identifying height 1 prime ideals of $T$ that are associated with every height 1 prime ideal of $A'$, and then building the coheight 1 prime ideals of $T$ that contain these height 1 prime ideals of $T$. Once we have these, we build an $A_X$-extension $R'$ of $A'$ that contains generating sets for each of the coheight 1 prime ideals.

Given a complete local ring $T$ satisfying certain conditions, one of the important steps in building our everywhere 1-noncatenary UFD is to identify particular coheight one prime ideals of $T$. In Lemma \ref{lem: make the Qs}, we show that we can find these coheight one prime ideals, and we use Lemma \ref{lem: no cross-contamination} to ensure that Lemma \ref{lem: make the Qs} can be used without issue. The argument for the proof of Lemma \ref{lem: no cross-contamination} is similar to the one given in the proof of Lemma 4.5 in \cite{Bonat_UFDs}.

\begin{lemma} \label{lem: no cross-contamination}
Let $(T, M)$ be a complete local ring and suppose that $I$ and $I'$ are two distinct minimal prime ideals of $T$ such that $I + I'$ is not $M$-primary. Let $a \in T$ be such that $a \notin X_i$ for all $X_i \in \Min(I + I')$. Then no minimal prime ideal of $(a) + I$ contains $I'$.
\end{lemma}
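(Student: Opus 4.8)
The plan is to argue by contradiction using the catenary and equidimensional-type behavior of the complete local ring $T$, together with a dimension count via the minimal primes $I$ and $I'$. Suppose there is a minimal prime ideal $P$ of $(a) + I$ with $I' \sub P$. Since $I \sub (a) + I \sub P$ as well, $P$ contains both minimal primes $I$ and $I'$, hence $P \in V((a)+I) \cap V(I')$, so $P$ contains $I + I'$ and therefore contains some minimal prime $X_i$ of $I + I'$. By hypothesis $a \notin X_i$, so $X_i \subn P$ strictly. The idea is to show that this forces $\height(P/I)$ to be at least $2$, contradicting the principal ideal theorem, which tells us that any minimal prime of the principal-mod-$I$ ideal $((a)+I)/I$ has height at most $1$ over $I$.

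First I would reduce to a statement purely about the local ring $T_P$ (or about $T/I$). Passing to $T/I$, the ideal $((a)+I)/I$ is principal, generated by the image $\bar a$, and $P/I$ is a minimal prime of it; by Krull's principal ideal theorem $\height(P/I) \le 1$. On the other hand, $P/I$ contains the nonzero prime $(I+I')/I$ (nonzero because $I' \nsub I$ as $I \ne I'$ are both minimal), and in fact contains $X_i/I$ where $X_i$ is a minimal prime of $I+I'$ strictly contained in $P$ (strict because $\bar a \in P/I$ but $\bar a \notin X_i/I$). So $(0) \subn X_i/I \subn P/I$ would be a chain of length $\ge 2$ in $T/I$ ending at $P/I$, forcing $\height(P/I)\ge 2$ — unless $X_i/I$ is itself a minimal prime of $T/I$, i.e.\ $X_i = I$. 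But $X_i \supseteq I + I' \supseteq I'$, and if $X_i = I$ then $I' \sub I$, contradicting $I \ne I'$ both minimal. Hence $\height(P/I) \ge 2$, contradicting the principal ideal theorem.

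The one place that needs care is confirming that $X_i \subn P$ is \emph{strict}: we know $a \in P$ (since $a \in (a)+I \sub P$) but $a \notin X_i$ by the choice of $a$ relative to $\Min(I+I')$, so indeed $X_i \ne P$, giving a strict containment $X_i \subn P$. Combined with $I \subn X_i$ being strict (from $I \ne I'$ and $X_i \supseteq I'$), we get the two-step chain $I \subn X_i \subn P$ in $T$, all inside $V(I)$, which is exactly the contradiction with $\height_{T/I}(P/I) \le 1$ from the principal ideal theorem applied to the principal ideal $((a)+I)/I$ of $T/I$.

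I do not expect a serious obstacle here; the argument is a short dimension count and does not even use the ``not $M$-primary'' hypothesis on $I+I'$ in an essential way (that hypothesis presumably matters for the \emph{application} of this lemma, ensuring $\Min(I+I')$ consists of nonmaximal primes and that suitable $a$ exist). If one wants to be maximally careful, the only subtle point is the strictness of the two containments, which I would state explicitly as above, and the observation that $X_i \ne I$ because $X_i$ contains the second minimal prime $I'$. Everything else is Krull's principal ideal theorem applied in $T/I$.
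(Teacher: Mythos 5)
Your proof is correct and uses essentially the same key tool as the paper: Krull's principal ideal theorem applied to the principal ideal $((a)+I)/I$ in the domain $T/I$. The only cosmetic difference is in how the contradiction is phrased — the paper observes that $\height(J/I)=1$ forces $J/I$ to be a minimal prime of $(I+I')/I$ (hence $J \in \Min(I+I')$ contains $a$, contradicting the hypothesis on $a$), while you build the explicit chain $I \subn X_i \subn P$ and contradict $\height(P/I)\le 1$ directly; these are two readings of the same inequality. Your observations that the strictness of both containments needs checking, that $X_i \neq I$ because $X_i \supseteq I'$, and that the ``not $M$-primary'' hypothesis is not used in the argument itself (it merely guarantees that the prime avoidance used to choose $a$ in the application is nonvacuous) are all accurate and match the paper's reasoning.
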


\begin{proof}
Let $J$ be a minimal prime ideal of $(a) + I$. Suppose for contradiction that $I' \sub J$. Then $I + I' \sub J$. 
Focusing on the ring $T/I$, we have ht$(J/I) = 1$ by the principal ideal theorem. As $I/I \sub (I + I')/I \sub J/I$, we conclude that $J/I$ is a minimal prime ideal of $(I + I')/I$ in $T/I$. 
%To see why this yields our desired result, suppose that there was some prime ideal $Q$ of $T$ satisfying $I + I' \sub Q \subn J$. As $I \sub Q$, $Q/I \in \Spec(T/I)$, and we infer that $(I + I')/I \sub Q/I \subn J/I$. However, this would imply that $J/I$ is not a minimal prime ideal of $(I + I')/I$ in $T/I$. Thus, no such $Q$ can exist, and
It follows that $J$ is a minimal prime ideal of $I + I'$. This is a contradiction since $a \in J$. Therefore, $I' \nsub J$, as desired.
\end{proof}

We are now ready to identify the coheight one prime ideals of $T$ that will be crucial in our construction. These prime ideals will contain only one minimal prime ideal of $T$. Furthermore, we ensure that the depth of $T$ localized at these ideals is at least two, a property that will be used in the subsequent lemma. We note that Lemma \ref{lem: make the Qs} is a generalization of Lemma 4.4 from \cite{Bonat_UFDs}, and the proof makes use of ideas in the proof of Lemma 3.6 in \cite{SMALL_2017}.

\begin{lemma} \label{lem: make the Qs}
Let $(T, M)$ be a complete local ring and let $I$ be a minimal prime ideal of $T$ satisfying the condition that dim$(T/I) \geq 3$. Let $a$ be a regular element of $T$ and suppose that the ideal $(a) + I$ has a minimal prime ideal $Q_1$ satisfying the property that $I$ is the only minimal prime ideal of $T$ contained in $Q_1$. Then there exists a prime ideal $Q$ of $T$ such that $Q_1 \subn Q$, $\dim(T/Q) = 1$, $\dep(T_Q) \geq 2$, $Q \nsub P$ for all $P \in \Ass(T)$, and $I$ is the only minimal prime ideal of $T$ contained in $Q$. 
%\todo{is the P condition implied by the T_Q condition?}
\end{lemma}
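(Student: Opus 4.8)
The plan is to build $Q$ one prime ideal at a time, starting from $Q_1$ and ascending by minimal primes of principal-element extensions, so that at each step the coheight drops by exactly one while we preserve the two key invariants: that $I$ remains the only minimal prime of $T$ contained in the current prime, and that no prime in $\Ass(T)$ contains it. The main tool at each stage will be Lemma \ref{lem: heitmann lemma 2} (or prime avoidance, since $\Ass(T)$ and the set of minimal primes of $T$ other than $I$ are both finite): given the current prime $Q' \supseteq Q_1$ with $\dim(T/Q') = d \geq 3$, we pick an element $q \in Q'$... no, rather an element $q \notin Q'$ that avoids the finitely many ``bad'' primes, then pass to a minimal prime $Q''$ of $(q) + Q'$ contained in... we need it inside $M$ but with the right coheight. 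Concretely: since $\dim(T/Q') \geq 2$, choose $q \in M \setminus Q'$ avoiding every minimal prime of $T$ except $I$, every prime in $\Ass(T)$, and every minimal prime of $Q'$ — this is possible by Lemma \ref{lem: heitmann lemma 2} applied in $T$ because $M$ is contained in no single one of these primes (it has coheight $0$ while they have coheight $\geq 1$). Then any minimal prime $Q''$ of $(q) + Q'$ has $\dim(T/Q'') = \dim(T/Q') - 1$ by the principal ideal theorem and the fact that $T/Q'$ is catenary (as $T$ is complete). The key point is that $Q''$ still contains only the minimal prime $I$ of $T$: if some other minimal prime $P \neq I$ were contained in $Q''$, then $P + Q'$ would be... we need $P + Q'$ not $M$-primary — here is where Lemma \ref{lem: no cross-contamination} enters, applied with the minimal primes of $T/Q'$ in the ring $T/Q'$, or more directly: since $q \notin P$ for every minimal prime $P$ of $T$ other than $I$ and $Q''$ is a minimal prime of $(q) + Q'$, $Q''$ cannot be a minimal prime of any $P + Q'$ for $P \neq I$, and since $Q'$ already contained only $I$ among minimal primes of $T$, neither can $Q''$ contain a different one. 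Iterate until the coheight reaches $1$; call the result $Q$.

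It remains to arrange $\dep(T_Q) \geq 2$. Here is where I expect the real work, and where the proof of Lemma 3.6 in \cite{SMALL_2017} is the model. The issue is that a generic coheight-one prime obtained this way might have depth one at $Q$. The fix is to be more careful at the \emph{last} step of the induction: when the current prime $Q'$ has $\dim(T/Q') = 2$, we do not merely pick a minimal prime of $(q) + Q'$, but instead we use that $\dep(T_{Q'}) \geq \dep(T) - \dim(T/Q') $... that inequality is not automatic. Better: we ensure throughout the induction that $\dep(T_{Q'}) \geq 2$, which for $Q_1$ itself follows because $a$ is regular and $Q_1$ is a minimal prime of $(a) + I$ containing only the minimal prime $I$, so $\dep(T_{Q_1}) \geq 1$ — and in fact we may need $\dim(T/I) \geq 3$ precisely to guarantee at the outset there is room. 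Actually the cleanest route is: at each stage pick $q$ to additionally lie outside every $P \in \Ass(T/q'T)$ for the previously-adjoined regular element $q'$ (equivalently, outside $\rAss$-type primes), forcing $q$ to be regular modulo the relevant prior data, so that $\dep(T_{Q''}) \geq \dep(T_{Q'})$ at each step; combined with $\dep(T_{Q_1}) \geq 1$ being boosted to $\geq 2$ by one extra regular element chosen at the first step (possible since $\dim(T/Q_1) \geq 2$), this propagates the bound $\dep(T_Q) \geq 2$ to the end. Since $q$ must avoid only finitely many primes — $\Min(T) \setminus \{I\}$, $\Ass(T)$, the minimal primes of $(q')+Q'$, and the associated primes coming from the depth bookkeeping — prime avoidance (or Lemma \ref{lem: heitmann lemma 2}) supplies such a $q$, and $Q \nsub P$ for all $P \in \Ass(T)$ holds because the very first $q$ was chosen outside every associated prime and $Q \supseteq (q) $.

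The main obstacle, then, is the depth condition $\dep(T_Q) \geq 2$: unlike the coheight count and the single-minimal-prime condition, which fall out of the principal ideal theorem and Lemma \ref{lem: no cross-contamination} routinely, controlling depth along the chain requires choosing each successive element to be regular on the appropriate quotient and tracking which associated primes must be avoided, exactly as in \cite{SMALL_2017}, Lemma 3.6. I would handle it by maintaining the stronger inductive hypothesis ``$\dep(T_{Q'}) \geq 2$ and $\dim(T/Q') \geq 2$'' (dropping to ``$\dim(T/Q) = 1$ and $\dep(T_Q) \geq 2$'' only at termination), which makes each inductive step a clean application of prime avoidance together with the standard fact that a regular element on a module of depth $\geq 2$ leaves the depth $\geq 1$ and can be chosen to keep it $\geq 2$ when there is enough dimension.
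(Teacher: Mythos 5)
Your chain-building strategy is sound and close in spirit to the paper's: both arguments climb from $Q_1$ to a coheight-one prime $Q$ while preserving the single-minimal-prime condition via a finite-avoidance argument (the paper simply takes any of the infinitely many primes directly above the current one and discards the finitely many bad ones, rather than passing through minimal primes of principal extensions, but either route works). The genuine gap is in the depth argument.

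You yourself observe that what the hypotheses give you at the start is only $\dep(T_{Q_1})\geq 1$ — and that is the best you can say, since if $Q_1\in\Ass(T/aT)$ then $\dep(T_{Q_1})=1$ exactly. So the inductive invariant ``$\dep(T_{Q'})\geq 2$'' fails at the base case, and your proposed fix — ``boosted to $\geq 2$ by one extra regular element chosen at the first step'' — is not a coherent step: adjoining an element and passing to a minimal prime over it does not add to the depth of the localization, and the claim that $\dep(T_{Q''})\geq\dep(T_{Q'})$ when the new element is ``regular on the appropriate quotient'' is not a standard fact and is not justified here (localizing at a larger prime can change depth in either direction).

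The paper avoids all this bookkeeping by using the given regular element $a$ at the \emph{last} step only. Since $a\in Q_1\subseteq Q_{n-2}$, whatever coheight-one prime $Q\supsetneq Q_{n-2}$ you pick automatically contains $a$, and you are free to pick $Q$ avoiding the finite set $\Ass_T(T/aT)$ in addition to the other finitely many bad primes. Then $a$ is regular in $T_Q$, and $QT_Q\notin\Ass_{T_Q}(T_Q/aT_Q)$, so $\dep_{T_Q}(T_Q/aT_Q)\geq 1$ and hence $\dep(T_Q)\geq 2$ — a one-step argument with no invariant to maintain. This also immediately gives $Q\nsubseteq P$ for $P\in\Ass(T)$, because $a\in Q$ is regular and associated primes consist of zero divisors. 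I recommend you replace your inductive depth-tracking with this terminal choice of $Q$.
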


\begin{proof}
Let $n \coloneqq \dim(T/I)$, and note that by hypothesis, $n \geq 3$. We first argue that there exists a saturated chain of prime ideals $I \subn Q_1 \subn Q_2 \subn \cdots \subn Q_{n-2}$ of $T$ such that $\dim(T/Q_{n-2}) = 2$ and $I$ is the only minimal prime ideal of $T$ contained in each $Q_i$. Then, we show that there exists a prime ideal $Q$ of $T$ such that $Q_{n-2} \subn Q \subn M$ is saturated, $\dep(T_Q) \geq 2$, $Q \nsub P$ for all $P \in \Ass(T)$, and $I$ is the only minimal prime ideal of $T$ contained in $Q$.

We create this saturated chain inductively. For the base case of $n = 3$, we consider the chain $I \subn Q_1$. In the ring $T/I$, we have ht$(Q_1/I) = 1$, so this chain is saturated. As $T$ is catenary, we have $\dim(T/Q_1) = n - 1 = 2$. By hypothesis, we also know that $I$ is the only minimal prime of $T$ contained in $Q_1$, and so we have successfully defined our saturated chain in this case.

If $n > 3$, we continue to define $Q_j$ inductively for $j \geq 2$. Suppose the prime ideals $Q_k$ of $T$ have already been defined for $k$ satisfying $1 \leq k < n - 2$. That is, $I \subn Q_1 \subn Q_2 \subn \cdots \subn Q_{k}$ is saturated, $\dim(T/Q_{k}) > 2$ and $I$ is the only minimal prime ideal of $T$ contained in each $Q_i$. 
Because $\dim(T/Q_k) > 2$ and $T$ is Noetherian, we know that there are infinitely many prime ideals $Q_{k+1}$ of $T$ such that $Q_k \subn Q_{k+1} \subn M$ and $\dim(Q_{k+1}/Q_k) = 1$. Furthermore, as $T$ is catenary, all such $Q_{k+1}$ satisfy $\dim(T/Q_{k+1}) > 1$. Suppose $I'$ is a minimal prime ideal of $T$ distinct from $I$. Then $I' \not\subseteq Q_{k}$. If $I' \sub Q_{k+1}$, then $Q_{k+1}$ is a minimal prime ideal of $Q_k + I'$. As there are only finitely many minimal prime ideals of $Q_k + I'$, we can choose $Q_{k+1}$ such that $I' \nsub Q_{k+1}$. In fact, as there are only finitely many minimal prime ideals of $T$, we can choose $Q_{k+1}$ such that $I$ is the only minimal prime ideal of $T$ it contains, and so we have successfully defined $Q_{k + 1}$. Eventually, using this inductive process, we will define $Q_{n-2}$, at which point we stop, and the construction of the saturated chain $I \subn Q_1 \subn Q_2 \subn \cdots \subn Q_{n-2}$ in $T$ is complete.

We must finally argue for the existence of our desired prime ideal $Q$ of $T$. Note that $a$ is regular in $T$ and $a \in Q_{n-2}$. By our argument above, we know that there are infinitely many prime ideals $Q$ of $T$ satisfying $Q_{n-2} \subn Q \subn M$ and $I$ is the only minimal prime ideal of $T$ contained in $Q$. We can thus choose $Q$ to additionally not be an element of $\Ass_T(T/aT)$, as this set is finite. Since $a \in Q$ is regular in $T$ and $T_Q$ is a flat extension of $T$, we know that $a$ is regular in $T_Q$. As we chose $Q$ such that $Q \notin \Ass_T(T/aT)$, we have $QT_Q \notin \Ass_{T_Q}(T_Q/aT_Q)$, and it follows that $\dep(T_Q) \geq 2$.

It remains to show that $Q \nsub P$ for all $P \in \Ass(T)$. To see this, note that $a$ is regular in $T$ and $a \in Q$. Then $Q$ cannot be contained in any associated prime ideal of $T$, as such prime ideals exclusively consist of zero divisors.
\end{proof}

% \begin{lemma} \label{lem: Q's are safe}
% Let $Q$ be a coheight 1 prime ideal of a complete local ring $(T, M)$ with $\dep(T) \geq 3$. Then $Q \nsub P$ for any $P \in \Ass_T(T/zT)$, where $z$ is a nonzero regular element of $T$.
% \end{lemma}

% \begin{proof}
% It suffices to show that $M$ and $Q$ are both not in $\Ass_T(T/zT)$. First, observe that $M$ being an associated prime of $T/zT$ would imply that all elements of $M$ are non-regular in $T/zT$. This would mean $\dep(T/zT) = 0$, meaning $\dep(T) = 1$, a contradiction.

% Second, suppose that $Q \in \Ass_T(T/zT)$. By Exercise 6.7 in \cite{Matsumura_CRT} (using the map $f \colon T \to T/zT$) \note{(put in intro?)}, we know $Q/zT \in \Ass_{T/zT}(T/zT)$. Also, note that $(T/zT)/(Q/zT) \iso T/Q$. Then, by Theorem \ref{thm: matsumura dim dep}, \[1 = \dim(T/Q) = \dim((T/zT)/(Q/zT)) \geq \dep(T/zT) = \dep(T) - 1 > 1,\] a contradiction, and so our overall claim follows.
% \end{proof}

 Our next goal is to use a modified version of Lemma \ref{lem: bonat 2.6} to find an extension of a given $N$-subring $R$ of $T$ that contains a generating set for each of the coheight one prime ideals obtained from Lemma \ref{lem: make the Qs}. We do this in Lemma \ref{lem: make R}, and to make sure that the conditions of that lemma are satisfied, we must check that each of our coheight one prime ideals are not contained in any $P \in \Ass(T)$ or any $P \in \rAss^{(R)}(T)$. The previous lemma verifies the former condition and the next lemma verifies the latter condition.

\begin{lemma} \label{lem: Q's are safe}
Let $(T,M)$ be a complete local ring with $\dep(T) \geq 2$, and let $Q \in \Spec(T)$ be such that $\dim(T/Q) = 1$ and $\dep(T_Q) \geq 2$. Let $z$ be a regular element of $T$, and let $P \in \Ass(T/zT)$. Then $Q \nsub P$.
\end{lemma}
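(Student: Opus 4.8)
The plan is to argue by contradiction: suppose $Q \subseteq P$ for some $P \in \Ass(T/zT)$ with $z$ regular in $T$. The key tension is between the two depth hypotheses ($\dep(T) \geq 2$ and $\dep(T_Q) \geq 2$) and the fact that $P \in \Ass(T/zT)$ forces $\dep(T_P) = 1$, since $z$ is a nonzerodivisor and $PT_P \in \Ass(T_P/zT_P)$ means $\dep(T_P/zT_P) = 0$, hence $\dep(T_P) = 1$. Because $Q \subseteq P$, we should be able to localize further and derive that $\dep((T_P)_{QT_P}) = \dep(T_Q)$ is constrained in a way that contradicts $\dep(T_Q) \geq 2$.

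**The key steps**, in order. First, note $\dim(T/Q) = 1$ and $Q \subseteq P \subsetneq M$ would force $P = Q$ (since there is no room for a prime strictly between $Q$ and $M$ other than $M$ itself, and $P \neq M$ because $P \in \Ass(T/zT)$ and $M \notin \Ass(T/zT)$ as $\dep(T) \geq 2 > 1$). Wait — one must check $P \neq M$: since $\dep(T) \geq 2$, the element $z$ is part of a system of parameters extending to depth $2$, so $\dep(T/zT) \geq 1$, hence $M \notin \Ass(T/zT)$; thus $P$ is nonmaximal, and since $Q \subseteq P \subsetneq M$ with $\dim(T/Q)=1$ we get $P = Q$. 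Second, with $P = Q$, the hypothesis $P \in \Ass(T/zT)$ gives $QT_Q \in \Ass(T_Q/zT_Q)$, so $\dep(T_Q/zT_Q) = 0$. Third, since $z$ is regular in $T$ and $T_Q$ is flat over $T$, $z$ is regular in $T_Q$, so $z$ is a nonzerodivisor and $\dep(T_Q) = \dep(T_Q/zT_Q) + 1 = 1$. This contradicts the hypothesis $\dep(T_Q) \geq 2$, completing the proof.

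**The main obstacle** I anticipate is the step identifying $P$ with $Q$ — specifically, carefully justifying that $P$ is nonmaximal. This rests on $\dep(T) \geq 2$ implying $M \notin \Ass(T/zT)$, which is standard (depth at least $2$ means after killing the nonzerodivisor $z$ the quotient still has depth at least $1$, so its maximal ideal is not associated) but needs to be stated cleanly. Once $P \neq M$ is in hand, $\dim(T/Q) = 1$ and the inclusion $Q \subseteq P \subsetneq M$ force $P = Q$ immediately, and the rest is the routine localization-and-depth computation using flatness of $T \to T_Q$ and the characterization of depth via regular sequences. An alternative to the $P = Q$ reduction: one could instead work directly in $T_P$, using that $Q \subseteq P$ gives a ring map $T_Q \to T_P$ that is not flat in an obviously useful way, so the $P = Q$ route is cleaner and is the one I would take.
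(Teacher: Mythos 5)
Your proof is correct and takes essentially the same approach as the paper: both use $\dim(T/Q) = 1$ to reduce the claim to showing $P \neq Q$ and $P \neq M$, and both rule these out by the observation that $P \in \Ass(T/zT)$ forces a depth-one localization at $P$, in tension with $\dep(T) \geq 2$ and $\dep(T_Q) \geq 2$. The paper is slightly more compact — it computes $\dep(T_P) = 1$ once and compares it against both $\dep(T_M)$ and $\dep(T_Q)$ simultaneously — whereas you treat the case $P = M$ separately via $\dep(T/zT) \geq 1$, but the underlying idea is identical.
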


\begin{proof}
As $\dim(T/Q) = 1$, it suffices to show that $P \neq M$ and $P \neq Q$. Observe that since $P \in \Ass(T/zT)$, we have $PT_P \in \Ass_{T_P}(T_P/zT_P)$, and so $\dep(T_P) = 1$. Since $\dep(T) \geq 2$ and $\dep(T_Q) \geq 2$, we have $P \neq Q$ and $P \neq M$ as desired.
\end{proof}

To simplify the proof of Lemma \ref{lem: make R}, we utilize the following preliminary lemma, in which we construct an $A_X$-extension of an $N$-subring that contains a generating set for a single one of our coheight one prime ideals. In the proof of Lemma \ref{lem: make R}, we repeatedly apply this preliminary result. Lemma \ref{lem: generating set for Q} is a generalization of Lemma 2.5 in \cite{Bonat_UFDs}, and our proof is very similar to the proof of that lemma, with relevant parts given verbatim.

\begin{lemma} \label{lem: generating set for Q}
Let $(T, M)$ be a complete local ring with $\dep(T) \geq 2$, let $(R, M \inter R)$ be an $N$-subring of $T$, and let $X$ be a finite set of nonmaximal prime ideals of $T$ such that $X_i \inter R = (0)$ for all $X_i \in X$. Let $Q$ be a prime ideal of $T$ such that $Q \nsub P$ for all $P \in X \union \Ass(T) \union \rAss^{(R)}(T)$. Then there exists an infinite $A_X$-extension $(S, M \inter S)$ of $R$ such that $S$ contains a generating set for $Q$.
\end{lemma}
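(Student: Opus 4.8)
The plan is to build the desired $A_X$-extension $S$ of $R$ containing a generating set for $Q$ by adjoining finitely many elements of $Q$ to $R$ one at a time, each time applying Lemma~\ref{lem: adjoin transcendental} to stay within the class of $A_X$-extensions. First I would fix a generating set $q_1, \dots, q_n$ of $Q$ and set $C \coloneqq X \union \Ass(T) \union \rAss^{(R)}(T)$, a finite set of nonmaximal prime ideals, none of which contains $Q$ by hypothesis. The goal is to produce, for each $i$, an element $x_i \in T$ with $x_i \equiv q_i \pmod{P}$ for every $P \in C$ but with $x_i$ transcendental over the relevant residue ring modulo each $P \in C$; then $R[x_1, \dots, x_n]$ localized at its intersection with $M$ contains a generating set for $Q$ (since $(x_i) + P = (q_i) + P$ for every $P \in C \supseteq \Ass(T)$, and $Q$ is generated modulo the nilradical-type obstructions by the $q_i$, one checks $(x_1, \dots, x_n)T = Q$).

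The heart of the argument is finding each such $x_i$. I would proceed by a transfinite or countable induction within the single step of adjoining $x_i$: using Lemma~\ref{lem: heitmann lemma 2} or Lemma~\ref{lem: heitmann lemma 3} (the generalized prime avoidance lemmas) to show that for a countable subring, the coset $q_i + \bigcap_{P \in C} P$ is not contained in the union $\bigcup \{r + P \mid P \in C, r \in D\}$ as $D$ ranges over the countably many polynomial-in-previously-adjoined-elements expressions with coefficients in $R$; this is exactly the standard mechanism for producing elements that are simultaneously congruent to a target modulo finitely many primes and transcendental over each residue ring. The key hypothesis $\dep(T) \geq 2$ enters to guarantee that $\bigcap_{P \in C}P \nsub P_0$ for each $P_0 \in C$ — equivalently that $Q$, which contains a regular element by a depth argument, is not swallowed — so that the avoidance lemma applies and the transcendence can actually be arranged. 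Once $x_i$ is constructed, Lemma~\ref{lem: adjoin transcendental} immediately upgrades $R[x_i]_{(M \inter R[x_i])}$ to an infinite $A_X$-extension, and I would iterate for $i = 1, \dots, n$, noting that the relevant prime set $C$ only grows in a controlled way (the new $\rAss^{(R[x_i])}(T)$ is still finite since it is a subset of $\rAss(T)$, which is finite because $T$ is Noetherian and $\dep(T) \ge 2$ forces it to come from finitely many $\Ass(T/zT)$ — actually one keeps $C$ fixed throughout, since $\rAss^{(S)}(T)$ for the final $S$ is what must be avoided; I would be careful to take $C$ large enough at the outset, or re-invoke that $\rAss$ is finite).

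I expect the main obstacle to be the bookkeeping in the inner induction that produces a single transcendental-and-congruent element $x_i$: one must enumerate all the ``bad'' cosets (polynomial relations that would make $x_i$ algebraic over some $R/(P \inter R)$, together with the congruence constraints modulo each $P \in C$), verify the cardinality/countability hypotheses of Lemma~\ref{lem: heitmann lemma 2} or Lemma~\ref{lem: heitmann lemma 3} at each stage, and confirm that the limit element still lies in $Q$ and is congruent to $q_i$ modulo every $P \in C$. The subtle point is ensuring the congruence modulo $\Ass(T)$ survives the limiting process, which is what guarantees $(x_1, \dots, x_n)T = Q$ rather than merely $(x_1,\dots,x_n)T \subseteq Q$; this follows because modulo each $P \in \Ass(T)$ we have arranged $x_i \equiv q_i$, and $Q = (q_1, \dots, q_n)T$ together with faithful flatness of $T$ over $S$ then forces $(x_1, \dots, x_n)T = Q$. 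Since the paper states the proof follows Lemma~2.5 of \cite{Bonat_UFDs} almost verbatim, I would present this standard construction in full but flag only the modifications needed to accommodate $X$ having more than one element and to track that prime elements of $R$ remain prime in $S$ — the latter being automatic from Lemma~\ref{lem: adjoin transcendental} since each $x_i$ is transcendental over $R$.
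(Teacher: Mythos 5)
Your proposal takes a different route from the paper's, and the route has a genuine gap at its core. You propose to find, for each generator $q_i$ of $Q$, an element $x_i$ with $x_i \equiv q_i \pmod{P}$ for every $P \in C := X \cup \Ass(T) \cup \rAss^{(R)}(T)$, \emph{and} with $x_i + P$ transcendental over $R/(P\cap R)$. But these two conditions are in conflict: if $x_i \equiv q_i \pmod{P}$ then $x_i + P = q_i + P$ in $T/P$, so the transcendence of $x_i + P$ is not something you can arrange — it is a fixed property of $q_i$, and there is no reason the $q_i$'s should have it. There is nothing left to optimize over. Moreover, even setting aside the transcendence problem, the inference from ``$x_i \equiv q_i \pmod P$ for all $P \in C$'' to ``$(x_1,\dots,x_n)T = Q$'' does not hold: congruence of $x_i$ to $q_i$ modulo the primes in $C$ does not even guarantee $x_i \in Q$, let alone that the $x_i$ generate $Q$.

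The paper avoids this trap entirely. It fixes a generating set $g_1,\dots,g_m$ of $Q$, picks $q_1 \in Q$ avoiding every $P \in C$ (possible by the avoidance Lemma~\ref{lem: heitmann lemma 2} or \ref{lem: heitmann lemma 3}, using $\dep(T)\geq 2$ to ensure $M \notin C$), and then observes that since $q_1 \notin P$, the map $t + P \mapsto (g_1 + t q_1) + P$ is injective on $T/P$. This freedom is what allows one to choose $m_1 \in M$ (again by the avoidance lemmas, now with the set $D$ of ``algebraic'' coset representatives) so that $\tilde g_1 := g_1 + m_1 q_1$ is transcendental over $R/(P\cap R)$ modulo every $P \in C$. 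Crucially, $\tilde g_1$ is \emph{not} congruent to $g_1$ modulo the primes in $C$; instead, $\tilde g_1 - g_1 = m_1 q_1 \in MQ$, so $(\tilde g_1, g_2,\dots,g_m) + MQ = Q$ and Nakayama's Lemma yields $Q = (\tilde g_1, g_2,\dots,g_m)$. That Nakayama step is the missing ingredient in your proposal; it is what lets you modify a generator without losing the generating property, and it replaces the unworkable congruence requirement. A secondary but also incorrect point: you assert that $\rAss(T)$ is finite because $T$ is Noetherian; in fact $\rAss(T)$ ranges over infinitely many regular elements $z$ and can easily be infinite. What is controlled is $\rAss^{(R)}(T)$, whose cardinality is bounded by that of $R$ (countable if $R$ is, and $< |T/M|$ otherwise), and this is exactly the cardinality hypothesis the avoidance lemmas need.
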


\begin{proof}
Write $Q = (g_1, \ldots, g_m)$ for $g_i \in T$. Define \[C \coloneqq X \union \Ass(T) \union \rAss^{(R)}(T),\] and note that our hypotheses imply that $Q \nsub P$ for all $P \in C$. Since $\dep(T) \geq 2$, $M \notin C$. If $R$ is countable, then $C$ is countable, and if $R$ is uncountable, then $|R| < |T/M|$ and so $|C| < |T/M|$.
%Now, \[\abs{C} \leq \sup\set{\aleph_0, \abs{R}} \leq \sup\set{\aleph_0, \abs{T/M}},\] where the latter inequality is an equality only when $T/M$ is countable. 
Use Lemma \ref{lem: heitmann lemma 2} if $C$ is countable and Lemma \ref{lem: heitmann lemma 3} if not, both with $D = \set{0}$, to conclude that $Q \nsub \bigunion_{P \in C} P$. Let $q_1 \in Q$ be such that $q_1 \notin P$ for all $P \in C$.

Fix $P \in C$. If $(g_1 + tq_1) + P = (g_1 + t'q_1) + P$ for some $t,t' \in T$, then $q_1(t - t') \in P$. Since $q_1 \notin P$, we have $t - t' \in P$ and so $t + P = t' + P$. It follows that if $t + P \neq t' + P$, then $(g_1 + tq_1) + P \neq (g_1 + t'q_1) + P$. Let $D_{(P)}$ be a full set of coset representatives for the cosets $t + P \in T/P$ that make $(g_1 + tq_1) + P$ algebraic over $R/(P \inter R)$. Note that the algebraic closure of $R/(P \inter R)$ in $T/P$ has cardinality less than or equal to $\sup\set{\aleph_0, \abs R}$. 
%Thus $\abs{D_{(P)}} \leq \sup\set{\aleph_0, \abs{T/M}}$ (with the same equality condition). 
Let $D = \bigunion_{P \in C} D_{(P)}$. 
%and note that $\abs{D} \leq \sup\set{\aleph_0, \abs{T/M}}$ (with the same equality condition). 
Use Lemma \ref{lem: heitmann lemma 2} if $R$ is countable and Lemma \ref{lem: heitmann lemma 3} if not, to find $m_1 \in M$ such that $m_1 \notin \bigunion \set{r + P \mid P \in C, r \in D}$.

Now, $(g_1 + m_1q_1) + P$ is transcendental over $R/(P \inter R)$ for all $P \in C$. Let $\tilde{g}_1 \coloneqq g_1 + m_1q_1$. By Lemma \ref{lem: adjoin transcendental}, $R_1 \coloneqq R[\tilde{g}_1]_{(M \inter R[\tilde{g}_1])}$ is an infinite $A_X$-extension of $R$. Note that $(\tilde{g}_1, g_2, \ldots, g_m) + MQ = Q$, so by Nakayama's Lemma, $Q = (\tilde{g}_1, g_2, \ldots, g_m)$.

Repeat the above process with $R$ replaced by $R_1$ to find $q_2 \in Q$ and $m_2 \in M$ so that, for $\tilde{g}_2 \coloneqq g_2 + m_2q_2$, $R_2 \coloneqq R_1[\tilde{g}_2]_{(M \inter R_1[\tilde{g}_2])}$ is an infinite $A_X$-extension of $R_1$ and $Q = (\tilde{g}_1, \tilde{g}_2, g_3, \ldots, g_m)$. Continue to find an $A_X$-extension $R_m$ of $R_{m-1}$ where $R_m$ contains a generating set for $Q$. Then $S \coloneqq R_m$ is the desired $A_X$-extension of $R$.
\end{proof}

We now have the tools to state and prove Lemma \ref{lem: make R}, an important stepping stone for our construction. In particular, given an $N$-subring $R$ of a complete local ring $T$, we show that there are sufficient conditions for there to exist an $A_X$-extension of $R$ that contains generating sets for all of our coheight one prime ideals of $T$. In the proof of Lemma \ref{lem: make R}, we construct an ascending chain of $N$-subrings of $T$ by repeatedly using Lemma \ref{lem: generating set for Q}, and we leverage Lemma \ref{lem: big union lemma} along the way. We note that Lemma \ref{lem: make R} is a generalization of Lemma 2.6 in \cite{Bonat_UFDs}.

\begin{lemma} \label{lem: make R}
Let $(T, M)$ be a complete local ring with $\dep(T) \geq 2$, let $(R_0, M \inter R_0)$ be an $N$-subring of $T$, and let $X$ be a finite set of nonmaximal prime ideals of $T$ such that $X_i \inter R_0 = (0)$ for all $X_i \in X$. Let $Y \sub \Spec(T)$ be such that $\abs{Y} \leq \sup\set{\aleph_0, \abs{T/M}}$, with equality only if $T/M$ is countable, and suppose that for all $Q \in Y$ we have $Q \nsub P$ for all $P \in X \union \Ass(T) \union \rAss^{(R_0)}(T)$. Then there exists an $N$-subring $(S, M \inter S)$ of $T$ such that $S$ contains a generating set for every $Q \in Y$. Moreover, $S$ satisfies all the conditions to be an $A_X$-extension of $R_0$ except possibly the cardinality condition, but it satisfies $\abs{S} \leq \sup\set{\aleph_0, \abs{R_0}, \abs{Y}}$.
\end{lemma}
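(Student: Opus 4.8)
The plan is to build $S$ as the union of a transfinite ascending chain of $N$-subrings of $T$ indexed by a suitable well-ordered set $\Omega$: at successor-type indices we enlarge the current ring to an $A_X$-extension containing a generating set for one more prime of $Y$, via Lemma \ref{lem: generating set for Q}, and at limit-type indices we take unions; the union is then controlled by Lemma \ref{lem: big union lemma}.

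First I would choose the index set. If $T/M$ is countable, then $\abs{Y} \le \aleph_0$ and we take $\Omega$ countable; if $T/M$ is uncountable, then the hypothesis forces $\abs{Y} < \abs{T/M}$, so we may take $\Omega$ to be the least ordinal of cardinality $\abs{Y}$, all of whose proper initial segments then have cardinality $< \abs{T/M}$ (as required for Lemma \ref{lem: big union lemma}). In either case we arrange a bijection between the successor-type elements of $\Omega$ and $Y$, writing $Q_\alpha$ for the prime assigned to such an $\alpha$. We then define $(R_\alpha, M \inter R_\alpha)$ by transfinite recursion: $R_0$ is the given ring; if $\gamma(\alpha) = \alpha$ put $R_\alpha \coloneqq \bigunion_{\beta < \alpha} R_\beta$; if $\gamma(\alpha) < \alpha$, apply Lemma \ref{lem: generating set for Q} with $R = R_{\gamma(\alpha)}$ and $Q = Q_\alpha$ to obtain an infinite $A_X$-extension $R_\alpha$ of $R_{\gamma(\alpha)}$ containing a generating set for $Q_\alpha$.

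The step I expect to be the main obstacle is verifying, at every successor-type $\alpha$, that the hypotheses of Lemma \ref{lem: generating set for Q} actually hold for $Q_\alpha$ over $R_{\gamma(\alpha)}$, namely that $Q_\alpha \nsub P$ for every $P \in X \union \Ass(T) \union \rAss^{(R_{\gamma(\alpha)})}(T)$. The conditions on $X$ and $\Ass(T)$ are handed to us, but $\rAss^{(R_{\gamma(\alpha)})}(T)$ properly enlarges $\rAss^{(R_0)}(T)$ as the rings grow (a newly created prime element of an intermediate ring can be associated to some $P$ with $P \inter R_0 = (0)$), so one must carry an inductive invariant ensuring this enlargement never engulfs a not-yet-handled prime of $Y$. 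Concretely, I would maintain, for every $\alpha$ and every $Q \in Y$ not yet handled at stage $\alpha$, both $Q \inter R_\alpha = (0)$ and $Q \nsub P$ for all $P \in X \union \Ass(T) \union \rAss^{(R_\alpha)}(T)$. The base case is free: if $0 \ne r \in Q \inter R_0$, then factoring $r$ in the UFD $R_0$ puts some prime element $p$ of $R_0$ in $Q$, so $Q$ contains a minimal prime of $T/pT$, contradicting $Q \nsub P$ for $P \in \rAss^{(R_0)}(T)$. To propagate $Q \inter R_\alpha = (0)$, whenever Lemma \ref{lem: generating set for Q} is invoked I would exploit that the collection $C$ in Lemma \ref{lem: adjoin transcendental} may be enlarged and that the cardinality bound on $Y$ keeps all relevant sets admissible for the prime avoidance Lemmas \ref{lem: heitmann lemma 2} and \ref{lem: heitmann lemma 3}, so as to additionally force the adjoined elements to be transcendental modulo every not-yet-handled prime of $Y$; the usual ``a polynomial lying in $P$ must be the zero polynomial'' argument then preserves $Q \inter R_\alpha = (0)$. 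Since $\rAss^{(R_\alpha)}(T) = \bigunion_{p\text{ prime of }R_\alpha}\Ass_T(T/pT)$ and every such $P$ meets $R_\alpha$ nontrivially, hence (by the third $N$-subring condition) in a height-one prime, excluding $Q \subseteq P$ is the delicate case: it is immediate when $Q$ has coheight one and $\dep(T_Q) \ge 2$ — the situation in which the lemma is applied, since then $Q \subsetneq P$ would force $P = M$, contradicting $\dep(T) \ge 2$ — and in general it requires a closer analysis of which primes can be associated to $T/pT$ for the newly created prime elements $p$.

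Finally I would set $S \coloneqq \bigunion_{\alpha \in \Omega} R_\alpha$ and apply Lemma \ref{lem: big union lemma}: this yields that $S$ satisfies every $N$-subring condition except possibly the cardinality bound, that $X_i \inter S = (0)$ for all $X_i \in X$, that prime elements of $R_0$ stay prime in $S$, and that $\abs{S} \le \sup\set{\aleph_0, \abs{R_0}, \abs{\Omega}} = \sup\set{\aleph_0, \abs{R_0}, \abs{Y}}$. The hypotheses on $\abs{R_0}$ and $\abs{Y}$ then give $\abs{S} \le \sup\set{\aleph_0, \abs{T/M}}$ with equality only if $T/M$ is countable, so $S$ is a genuine $N$-subring; and since each $Q \in Y$ equals $Q_\alpha$ for some successor-type $\alpha$ with $R_\alpha \sub S$, the ring $S$ contains a generating set for every $Q \in Y$. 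Together with the properties extracted from Lemma \ref{lem: big union lemma}, this is exactly the assertion of the lemma.
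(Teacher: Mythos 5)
Your construction is the same one the paper gives: transfinitely apply Lemma~\ref{lem: generating set for Q} at successor stages, take unions at limits, and control the union with Lemma~\ref{lem: big union lemma}. The point you flag as ``the main obstacle'' is a genuine subtlety, and it is worth being precise about it, because the paper's proof invokes Lemma~\ref{lem: generating set for Q} at stage $\alpha$ with base ring $R_{\gamma(\alpha)}$ without commenting on it. That lemma requires $Q_\alpha \nsub P$ for all $P \in X \union \Ass(T) \union \rAss^{(R_{\gamma(\alpha)})}(T)$, whereas the hypothesis on $Y$ only supplies $\rAss^{(R_0)}(T)$; and $\rAss^{(R_{\gamma(\alpha)})}(T)$ does grow as new regular (and prime) elements are adjoined. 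So the inductive step does need an argument, exactly as you say.

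Two caveats about your proposed fixes. First, maintaining $Q \inter R_\alpha = (0)$ for the not-yet-handled $Q$ does \emph{not} by itself exclude $Q \sub P$ with $P \in \rAss^{(R_\alpha)}(T)$: such a $P$ satisfies $P \inter R_\alpha \neq (0)$ while $Q \inter R_\alpha = (0)$, and both are compatible with $Q \sub P$, so this invariant does not close the gap. Second, your observation that the obstruction disappears when $\dim(T/Q)=1$ and $\dep(T_Q)\geq 2$ is correct and is precisely Lemma~\ref{lem: Q's are safe}, which then yields the ring-independent conclusion $Q \nsub P$ for every $P \in \rAss(T)$. This is the situation in which Lemma~\ref{lem: make R} is actually applied in Theorem~\ref{thm: main everywhere}, and it is also how the analogous Lemma 2.6 of \cite{Bonat_UFDs} is phrased, namely with the ring-independent set $\rAss(T)$ in the hypothesis rather than $\rAss^{(R_0)}(T)$. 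With the hypothesis read that way, the inductive step is unconditional because $\rAss^{(R_\alpha)}(T) \sub \rAss(T)$ for every $\alpha$, and your argument (and the paper's) goes through cleanly. As written, with $\rAss^{(R_0)}(T)$ only, neither your sketch nor the paper's proof fully justifies the inductive application of Lemma~\ref{lem: generating set for Q}; you are right to call this out, and you should not expect to close it by enlarging the collection $C$ alone.
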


\begin{proof}
Write $Y = \set{Q_\ell}_{\ell \in L'}$, where $L'$ is some well-ordered index set with minimal element 0 that is countable or satisfies $\abs{\set{k \in L' \mid k < \ell}} < \abs{T/M}$ for all $\ell \in L'$. If $L'$ does not have a maximal element, let $L = L'$, and if $L'$ does have a maximal element, define $L = L' \cup \{\alpha\}$ where $\alpha > \ell$ for all $\ell \in L'$. Let $R_1$ be the infinite $A_X$-extension of $R_0$ obtained by applying Lemma \ref{lem: generating set for Q} with $Q = Q_0$. We recursively define $R_\ell$ for all other $\ell \in L$. Recall that we set $\g(\ell) \coloneqq \sup\set{k \in L \mid k < \ell}$. To define $R_\ell$, we proceed in one of two ways. If $\g(\ell) < \ell$, let $R_\ell$ be the $A_X$-extension of $R_{\g(\ell)}$ obtained by applying Lemma \ref{lem: generating set for Q} with $Q = Q_{\g(\ell)}$. Then $R_\ell$ contains a generating set for every $Q_k$ satisfying $k < \ell$. Alternatively, if $\g(\ell) = \ell$, simply set $R_\ell = \bigunion_{k < \ell} R_k$, which ensures that $R_\ell$ contains a generating set for every $Q_k$ satisfying $k < \ell$. 
%By Lemma \ref{lem: big union lemma}, this $R_\ell$ is an $N$-subring of $T$, as \[\abs{R_\ell} \leq \sup\set{\aleph_0, \abs{R_0}, \abs{Y}} \leq \sup\set{\aleph_0, \abs{T/M}}.\] 
Then, by Lemma \ref{lem: big union lemma}, $S \coloneqq \bigunion_{\ell \in L} R_\ell$ is the desired $A_X$-extension of $R_0$. 
%Given that we know $\abs{L} \leq \sup\set{\aleph_0, \abs{T/M}}$ and $\abs{R_0} \leq \sup\set{\aleph_0, \abs{T/M}}$, where both are with equality only if $T/M$ is countable, Lemma \ref{lem: big union lemma} tells us that $S$ is an $N$-subring of $T$ (as $\abs{S} \leq \sup\set{\aleph_0, \abs{R_0}, \abs{L}}$) such that prime elements of some $R_\ell$ are prime in $S$ and $X_i \inter S = (0)$ for all $X_i \in X$. That is, if $\abs{L} \leq \sup\set{\aleph_0, \abs{R_0}}$, then $S$ would be an $A_X$-extension of $R_0$. Furthermore, a generating set for $Q_{\g(\ell)}$ is contained in $R_\ell$, so for every $\ell \in L$, $S$ contains a generating set for $Q_\ell$.
\end{proof}

%\subsection{Relevant Lemmas, Part 2} \label{subsec: A tower}

%This subsection is concerned with proving Lemma \ref{lem: make A}. In Theorem \ref{thm: main everywhere}, this is the second of the two extension lemmas that will be alternated between to ensure that our subring $A$ has all desired properties.
Recall that given a complete local ring $T$, we want to construct a subring $A$ of $T$ whose completion is $T$. To show this, we use Theorem \ref{thm: completion-proving machine}. Thus, we need the natural map $A \to T/M^2$ to be surjective and $IT \inter A = I$ for every finitely generated ideal $I$ of $A$. 
%In our construction, each time we apply Lemma \ref{lem: make A} to get an $A_X$-extension $A'$ of $R'$, the subring we have constructed to that point, we ensure that a particular element of $T/M^2$ is in the image of $A' \to T/M^2$, and also ensure that $IT \inter A' = I$ for every finitely generated ideal $I$ of $A'$. We will eventually alternate $\abs{T/M^2}$ times in total, so that every element of $T/M^2$ is covered.
%We begin with the following lemma, which allows us to construct an $A_X$-extension with the first of the two discussed properties. 
We use the next lemma to construct $A$ so that the first of these properties is satisfied. It is a direct generalization of Lemma 3.5 in \cite{Loepp_Simpson}, in that we now allow $X$ to contain more than one element and $R$ to be finite. We reproduce that proof here with small changes.

% \begin{lemma} \label{lem: finite to inf}
% Let $(T, M)$ be a complete local ring, let $(R, M \inter R)$ be a finite $N$-subring of $T$, and let $X$ be a finite set of nonmaximal prime ideals of $T$ such that $X_i \inter R = (0)$ for all $X_i \in X$. Then there exists a countably infinite $A_X$-extension $S$ of $T$.
% % $N$-subring $S$ of $T$ such that $R \sub S \sub T$ and $X_i \inter S = (0)$ for all $X_i \in X$.
% \end{lemma}

% \begin{proof}
% We will adjoin a carefully-chosen transcendental element to $R$ and use Lemma \ref{lem: adjoin transcendental}. Let \[C = X \union \Ass(T) \union \set{P \in \Spec(T) \mid P \in \Ass(T/rT) \text{ with } 0 \neq r \in R}.\] We know that $M \notin C$. For each $P \in C$, define $D_{(P)}$ to be a set of coset representatives of all the cosets $t + P \in T/P$ that are algebraic over $R/(P \inter R)$. Let $D \coloneqq \bigunion_{P \in C} D_{(P)}$, and use Lemma 2 from \cite{Heitmann_UFDs} with $I = M$ to find $x \in M$ such that $x \notin \bigunion\set{r + P \mid P \in C, r \in D}$. Then, apply Lemma \ref{lem: adjoin transcendental} to show that $S \coloneqq R[x]_{(M \inter R[x])}$ is a countably infinite $A_X$-extension of $R$.
% \end{proof}

\begin{lemma} \label{lem: put t in image}
Let $(T, M)$ be a complete local ring with $\dep(T) \geq 2$, let $(R, M \inter R)$ be an $N$-subring of $T$, and let $X$ be a finite set of nonmaximal prime ideals of $T$ such that $X_i \inter R = (0)$ for all $X_i \in X$. Let $t \in T$. Then there exists an infinite $A_X$-extension $(S, M \inter S)$ of $R$ such that $t + M^2$ is in the image of the map $S \to T/M^2$.
\end{lemma}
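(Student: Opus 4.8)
The plan is to follow the familiar pattern used in the literature (e.g. in \cite{Loepp_Simpson} and in the proof of Theorem 8 in \cite{Heitmann_UFDs}) for adjusting the image of a subring in $T/M^2$ by a single element. The key point is that we want an element $s \in T$ of the form $s = t + m$ with $m \in M^2$, chosen so that $R[s]_{(M \inter R[s])}$ is still an $A_X$-extension of $R$. Since $m \in M^2$, we will automatically have $s + M^2 = t + M^2$, so the image condition is free once we show that such an $A_X$-extension exists.

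First I would set $C \coloneqq X \union \Ass(T) \union \rAss^{(R)}(T)$, the standard collection of "bad" primes to avoid. Since $\dep(T) \geq 2$ we have $M \notin C$, and $C$ is either countable (if $R$ is countable) or of cardinality less than $\abs{T/M}$ (if $R$ is uncountable), exactly as in the proof of Lemma \ref{lem: generating set for Q}. The goal is then to find $m \in M^2$ such that, setting $s \coloneqq t + m$, the element $s + P$ is transcendental over $R/(P \inter R)$ in $T/P$ for every $P \in C$; if I can do that, then Lemma \ref{lem: adjoin transcendental} (taking the ambient collection of primes there to be $C$ itself) immediately gives that $S \coloneqq R[s]_{(M \inter R[s])}$ is an infinite $A_X$-extension of $R$, and we are done since $s + M^2 = t + M^2$.

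To find such an $m$: for each $P \in C$, the cosets $t' + P \in T/P$ with $t + t' + P$ algebraic over $R/(P \inter R)$ form a set $D_{(P)}$ whose cardinality is at most $\sup\set{\aleph_0, \abs{R}}$, since the algebraic closure of $R/(P\inter R)$ in $T/P$ has cardinality at most $\sup\set{\aleph_0, \abs R}$ and the map $t' + P \mapsto (t + t') + P$ is injective on cosets. Let $D \coloneqq \bigunion_{P \in C} D_{(P)}$; then $\abs{C \times D} \leq \sup\set{\aleph_0, \abs R}$, which is countable if $R$ is and is less than $\abs{T/M}$ otherwise. The only subtlety — and the one place that requires a small argument beyond the cited lemma — is that I need $m$ to lie in $M^2$, not merely in $M$. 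For this I apply Lemma \ref{lem: heitmann lemma 2} (when $C$ is countable) or Lemma \ref{lem: heitmann lemma 3} (otherwise) with the prime ideal $I$ taken to be $M^2$ together with the countable/small set of cosets $\set{r - t + P \mid P \in C, r \in D}$: since $\dep(T) \geq 2$, $M^2$ is not contained in any $P \in C$ (as $M \notin C$ and each $P \in C$ is prime, $M \not\subseteq P$, hence $M^2 \not\subseteq P$), so those lemmas yield an element of $M^2$ avoiding all the cosets $r - t + P$. Call it $m$, and note $m + P = (r' ) + P$ is impossible for the relevant $r'$; translating back, $t + m + P \notin D_{(P)} \cdot$-cosets, i.e. $(t+m) + P$ is transcendental over $R/(P \inter R)$ for every $P \in C$.

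The main obstacle is essentially bookkeeping: making sure the application of Heitmann's avoidance lemmas is legitimate with $M^2$ in the role of the prime ideal $I$ (it need not be prime, but Lemma \ref{lem: heitmann lemma 2} and Lemma \ref{lem: heitmann lemma 3} only require $I$ to be an ideal not contained in any single $P \in C$ — one should double-check the exact hypotheses, but the standard statements allow this, and here $M^2 \nsubseteq P$ for all $P \in C$ since $\dep(T) \geq 2$ forces $M \notin C$). Once the transcendence of $(t+m)+P$ is secured for all $P \in C$, Lemma \ref{lem: adjoin transcendental} does all the remaining work of verifying that $S = R[t+m]_{(M \inter R[t+m])}$ is an infinite $A_X$-extension, and the image statement $t + M^2 \in \operatorname{im}(S \to T/M^2)$ is immediate because $(t+m) - t = m \in M^2$.
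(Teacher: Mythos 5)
Your approach is essentially identical to the paper's: set $C = X \cup \Ass(T) \cup \rAss^{(R)}(T)$, use $\dep(T) \geq 2$ to get $M \notin C$ hence $M^2 \nsubseteq P$ for all $P \in C$, collect the ``algebraically bad'' coset representatives into a set $D$, invoke Lemma~\ref{lem: heitmann lemma 2} or \ref{lem: heitmann lemma 3} with $I = M^2$, and then appeal to Lemma~\ref{lem: adjoin transcendental} to see that $R[t+m]_{(M\cap R[t+m])}$ is an infinite $A_X$-extension; the image condition is free since $m \in M^2$. There is one small bookkeeping slip to note: you defined $D_{(P)}$ as representatives of cosets $t' + P$ for which $t + t' + P$ is algebraic, so the bad cosets for $m$ are exactly $\{r + P : r \in D_{(P)}\}$ and the avoidance lemmas should be applied to that family; your extra ``$-t$'' shift (writing $\{r - t + P\}$) is inconsistent with the definition of $D_{(P)}$ you gave and would instead make $2t + m$ transcendental mod $P$. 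Dropping the ``$-t$'' (or, equivalently, redefining $D_{(P)}$ as representatives $z$ with $z + P$ itself algebraic) fixes this; with either consistent choice the argument matches the paper's.
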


\begin{proof}
Define \[C \coloneqq X \union \Ass(T) \union \rAss^{(R)}(T).\] Since $\dep(T) \geq 2$, $M \notin C$. Let $P \in C$ and let $D_{(P)}$ be a full set of coset representatives of the elements $z + P \in T/P$ that make $z + t + P$ algebraic over $R$. Define $D = \bigunion_{P \in C} D_{(P)}$. Since $M \nsub P$ for all $P \in C$, we have $M^2 \nsub P$ for all $P \in C$. Use Lemma \ref{lem: heitmann lemma 2} if $R$ is countable and Lemma \ref{lem: heitmann lemma 3} if not, both with $I = M^2$, to find $x \in M^2$ such that $x \notin \bigunion \set{r + P \mid P \in C, r \in D}$. Define $S \coloneqq R[x + t]_{(M \inter R[x + t])}$. Note that $t + M^2$ is in the image of the map $S \to T/M^2$. By Lemma \ref{lem: adjoin transcendental}, $S$ is an infinite $A_X$-extension of $R$.
\end{proof}

% \begin{lemma} \label{lem: put t's in image}
% Let $(T, M)$ be a complete local ring with $\dep(T) \geq 2$, let $(R, M \inter R)$ be an $N$-subring of $T$, and let $X$ be a finite set of nonmaximal prime ideals of $T$ such that $X_i \inter R = (0)$ for all $X_i \in X$. Choose $U \sub T$ such that $\abs{U} < \abs{T/M}$. Then there exists an $A_X$-extension \note{(sorta)} $S$ of $T$ such that $u + M^2$ is in the image of the map $S \to T/M^2$ for all $u \in U$.
% \end{lemma}

% \begin{proof}
% Write $U = \set{u_v}_{v \in V}$, where $V$ is some well-ordered index set with minimal element 0 that satisfies the inequality $\abs{\set{w \in V \mid w < v}} < \abs{T/M}$ for all $v \in V$. Let $R_0$ be the $N$-subring obtained by applying Lemma \ref{lem: put t in image} with $u = u_0$. We recursively define $R_v$ for every other $v \in V$ by proceeding in one of two ways. If $\g(v) = v$, simply set $R_v = \bigunion_{w < v} R_w$. By Lemma \ref{lem: big union lemma}, this $R_v$ is an $N$-subring of $T$ (as $\abs{V} < \abs{T/M}$). Alternatively, if $\g(v) < v$, let $R_v$ be the $N$-subring obtained by applying Lemma \ref{lem: put t in image} with $R = R_{\g(v)}$ and $u = u_v$. \note{wait a second ...}
% \end{proof}

We use the next lemma to show that the second condition needed to apply Theorem \ref{thm: completion-proving machine} holds.  That is, for the subring $A$ of our complete local ring $T$, we have $IT \cap A = I$ for every finitely generated ideal $I$ of $A$.
%
%As discussed above, this lemma is the one that will be used in the proof of Theorem \ref{thm: main everywhere}. It begins by applying Lemma \ref{lem: put t in image} to ensure the first condition is satisfied, and then proceeds to use Lemma \ref{lem: big union lemma} repeatedly to satisfy the second condition. 
The lemma differs from Lemma 3.7 in \cite{Loepp_Simpson} by allowing $X$ to contain more than one element. Thus, the proof of Lemma \ref{lem: make A} is very similar to the proof of Lemma 3.7 in \cite{Loepp_Simpson}, with some parts taken exactly as stated in that proof.
%we reproduce the argument used in the proof of that lemma with minor adjustments.

\begin{lemma} \label{lem: make A}
Let $(T, M)$ be a complete local ring with $\dep(T) \geq 2$, let $(R, M \inter R)$ be an $N$-subring of $T$, and let $X$ be a finite set of nonmaximal prime ideals of $T$ such that $X_i \inter R = (0)$ for all $X_i \in X$. Let $t \in T$. Then there exists an infinite $A_X$-extension $(S, M \inter S)$ of $R$ such that $t + M^2$ is in the image of the map $S \to T/M^2$, and for every finitely generated ideal $I$ of $S$ we have $IT \inter S = I$.
\end{lemma}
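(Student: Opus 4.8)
The plan is to build the desired $A_X$-extension $S$ as the union of a carefully chosen ascending chain of $N$-subrings, alternating between two kinds of steps: steps that ``put $t+M^2$ in the image'' (and more generally, put enough elements in the image to handle $IT\cap S = I$), and steps that ``close up'' finitely generated ideals. First, by Lemma \ref{lem: put t in image}, there is an infinite $A_X$-extension $R_1$ of $R$ with $t+M^2$ in the image of $R_1 \to T/M^2$; since $R_1$ is an $A_X$-extension of $R$, any subsequent $A_X$-extension of $R_1$ will still be an $A_X$-extension of $R$, and $t+M^2$ will remain in the image. So it suffices to produce, starting from $R_1$, an $A_X$-extension $S$ satisfying $IT\cap S = I$ for every finitely generated ideal $I$ of $S$.

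The core difficulty — and this is what I expect to be the main obstacle — is that ``every finitely generated ideal of $S$'' is a moving target: as we enlarge the ring we create new finitely generated ideals that themselves need to be closed up. The standard resolution (following Lemma 3.7 of \cite{Loepp_Simpson}) is a transfinite/countable bookkeeping argument. One enumerates, along a suitable well-ordered index set $\O$ (countable if $T/M$ is uncountable, or of size $|T/M|$ if $T/M$ is countable — chosen so Lemma \ref{lem: big union lemma} applies), all triples consisting of a finitely generated ideal $I$ together with an element $c\in IT$; at successor stages one uses a single-step ``closing'' lemma to pass from $R_\g(\a)$ to an $A_X$-extension $R_\a$ in which the specified $c$ lands in the expansion of the specified ideal, i.e. $c \in I R_\a$ (so that $c$ no longer witnesses a failure of $IT\cap S = I$); at limit stages one takes unions. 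Here is where one invokes the generalization of Lemma \ref{lem: jensen} to a finite avoidance set $X$: given $I$ finitely generated in $R_\g(\a)$ and $c \in IT\cap R_\g(\a)$, produce an $A_X$-extension $S'$ with $|S'|=|R_\g(\a)|$, $X_i\cap S' = (0)$ for all $X_i\in X$, and $c\in IS'$. (Lemma \ref{lem: jensen} is stated with $X_i\cap R = aR$; we need the $a=0$ case, and we need to track that the $A_X$ property is preserved, which is why the excerpt's authors presumably restate it.) Because at each stage the ring has cardinality at most $\sup\{\aleph_0,|R|,|\O|\}$, there are at most that many finitely generated ideals and at most that many elements of $T$ in the ring, so the enumeration can be arranged to exhaust all the relevant triples; running through $\O$ a second time if necessary (or choosing $\O$ to have no maximal element and be closed under the bookkeeping) guarantees nothing is left over.

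The final step is to verify that $S \coloneqq \bigcup_{\a\in\O} R_\a$ works. By Lemma \ref{lem: big union lemma}, $S$ satisfies all the $N$-subring conditions except possibly cardinality and satisfies $X_i\cap S = (0)$ for all $X_i\in X$ and $\abs{S}\le\sup\set{\aleph_0,\abs R,\abs\O\}$; with the cardinality bookkeeping above this gives the required cardinality bound, so $S$ is an $A_X$-extension of $R$. Prime elements of $R$ (indeed of $R_1$) stay prime, again by Lemma \ref{lem: big union lemma}. The property $t+M^2 \in \operatorname{im}(S\to T/M^2)$ is inherited from $R_1\sub S$. Finally, for the ideal condition: let $I=(y_1,\dots,y_k)$ be a finitely generated ideal of $S$ and let $c\in IT\cap S$. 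All of $y_1,\dots,y_k$ and $c$ lie in some $R_\a$; then $I_0 \coloneqq (y_1,\dots,y_k)R_\a$ is a finitely generated ideal of $R_\a$ with $c\in I_0 T\cap R_\a$, so by construction there is a later stage $\b$ with $c\in I_0 R_\b \sub I$, whence $c\in I$. Therefore $IT\cap S = I$, completing the verification and hence the lemma. (The one genuinely new ingredient relative to \cite{Loepp_Simpson} is carrying the finite set $X$ through every step, which is routine given the $A_X$-versions of the component lemmas already established earlier in the paper.)
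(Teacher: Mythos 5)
Your plan is essentially the paper's: first invoke Lemma~\ref{lem: put t in image} to get $t+M^2$ into the image, then run a transfinite bookkeeping over pairs $(I,c)$ using the $a=0$ case of Lemma~\ref{lem: jensen} to force $c\in IR_\a$ at successor stages, and glue with Lemma~\ref{lem: big union lemma}. That is the same skeleton, with the same component lemmas. One imprecision worth tightening: ``running through $\O$ a second time if necessary'' undersells the moving-target problem. A single transfinite pass over the pairs $(I,c)$ with $I$ a finitely generated ideal of the starting ring $R_0$ only gives you $IT\cap R_0\subseteq IR_1$ for such $I$; it says nothing about finitely generated ideals that first appear in $R_1$, and two passes have the same defect relative to $R_2$. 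The paper resolves this by iterating the \emph{entire} transfinite construction countably many times, producing a chain $R_0\subseteq R_1\subseteq R_2\subseteq\cdots$ with $IT\cap R_n\subseteq IR_{n+1}$ for every finitely generated ideal $I$ of $R_n$, and only then taking $S=\bigcup_n R_n$; the finite-generation of $I$ guarantees all generators and a given $c\in IT\cap S$ sit in some common $R_r$, so $c\in IT\cap R_r\subseteq IR_{r+1}\subseteq I$. Your parenthetical alternative (one cleverly bookkept pass over a larger $\O$) could be made to work with a diagonal enumeration of pairs as they arise, but that requires a genuinely more delicate argument than you give; the countable tower of transfinite passes is the cleaner route and is what the paper actually does.
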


\begin{proof}
Apply Lemma \ref{lem: put t in image} to find an infinite $A_X$-extension $R_0$ of $R$ such that $t + M^2$ is in the image of the map $R_0 \to T/M^2$. We construct an $A_X$-extension $S$ of $R_0$ such that $IT \inter S = I$ for every finitely generated ideal $I$ of $S$.

Let \[\O \coloneqq \set{(I,c) \mid I \text{ is a finitely generated ideal of } R_0 \text{ and } c \in IT \inter R_0},\] and note that $\abs{\O} = \abs{R_0}$. Well-order $\O$ so that it has minimal element 0 and so that it does not have a maximal element.  
%Note that $\Omega$ is countable or it satisfies $\abs{\set{\b \in \O \mid \b < \a}} < \abs{T/M}$ for every $\a \in \O$. 
We inductively define an ascending collection $\set{R_\a}_{\a \in \O}$ of $N$-subrings of $T$. We have already defined $R_0$. Let $\a \in \O$ and assume that $R_\b$ has been defined for all $\b < \a$. 
If $\g(\a) < \a$, write $\g(\a) = (I,c)$, and define $R_\a$ to be the $A_X$-extension of $R_{\g(\a)}$ obtained from Lemma \ref{lem: jensen} (taking $R = R_{\g(\a)}$ and $a = 0$) so that $\abs{R_\a} = \abs{R_{\g(\a)}}$, and $c \in IR_\a$. 
If $\g(a) = \a$, define $R_\a = \bigunion_{\b < \a} R_\b$. 
%By Lemma \ref{lem: big union lemma}, this $R_\a$ is an $N$-subring of $T$ and an $A_X$-extension of $R_0$, as \[\abs{R_\a} \leq \sup\set{\aleph_0, \abs{\O}, \abs{R_0}} = \sup\set{\aleph_0, \abs{R_0}} \leq \sup\set{\aleph_0, \abs{T/M}}.\] 

With our ascending collection of $N$-subrings defined, set $R_1 \coloneqq \bigunion_{\a \in \O} R_\a$. 
By Lemma \ref{lem: big union lemma}, $R_1$ is an $A_X$-extension of $R_0$ and $|R_1| = |R_0|$. For $I$ a finitely generated ideal of $R_0$ and $c \in IT \inter R_0$, we know $(I,c) = \g(\a)$ for some $\a \in \O$ with $\g(\a) < \a$. By construction, $c \in IR_\a \sub IR_1$. It follows that $IT \inter R_0 \sub IR_1$ for every finitely generated ideal $I$ of $R_0$.

Repeat the above construction replacing $R_0$ with $R_1$ to find an $A_X$-extension $R_2$ of $R_1$ such that $|R_2| = |R_0|$ and $IT \inter R_1 \sub IR_2$ for every finitely generated ideal $I \sub R_1$. Continue to obtain an infinite chain of $N$-subrings $R_1 \sub R_2 \sub R_3 \sub 
\cdots$ of $T$ such that, for all integers $n \geq 1$, $R_{n+1}$ is an $A_X$-extension of $R_n$, $|R_n| = |R_0|$, and $IT \inter R_n \sub IR_{n+1}$ for every finitely generated ideal $I$ of $R_n$.

Let $S \coloneqq \bigunion_{i=1}^\infty R_i$. By Lemma \ref{lem: big union lemma}, $S$ is an $A_X$-extension of $R_1$. Since $S$ is an $A_X$-extension of $R_1$ and $R_1$ is an $A_X$-extension of $R_0 = R$, we have that $S$ is an $A_X$-extension of $R$. Let $I = (g_1, \ldots, g_m)$ for some $g_i \in S$ be a finitely generated ideal of $S$. Let $c \in IT \inter S$. Then there is an $r \in \N$ such that $c \in R_r$ and $g_i \in R_r$ for all $i \in \set{1, 2, \ldots, m}$. Now, $c \in (g_1, \ldots, g_m)T \inter R_r \sub (g_1, \ldots, g_m)R_{r+1} \sub I$, and so $IT \inter S = I$. Note also that as $t + M^2$ was in the image of the map $R_0 \to T/M^2$, it will still be in the image of the map $S \to T/M^2$, and so it follows that $S$ is the desired $N$-subring of $T$.
\end{proof}

We now have all the tools in place to prove the second main theorem of the paper, in which we find sufficient conditions for a complete local ring $(T,M)$ to be the completion of an everywhere 1-noncatenary local UFD $A$. For the proof, we want to construct an $N$-subring $A$ of $T$ that satisfies the hypotheses of Theorem \ref{thm: completion-proving machine}, as well as the hypotheses of Lemma \ref{lem: noncatenary-proving machine} with respect to each of its height one prime ideals. 
It does not suffice to simply construct an $N$-subring that satisfies the hypotheses of one of the results and then construct a larger $N$-subring that satisfies the hypotheses of the other since that $N$-subring may no longer satisfy the hypotheses of the first result.
Thus, we alternate between applying Lemma \ref{lem: make A} and applying Lemma \ref{lem: make R}, and show that our desired subring $A$ is the union of the subrings obtained by this process.

\begin{theorem} \label{thm: main everywhere}
Let $(T, M)$ be a complete local ring, with $\Min(T) = \set{P^{(1)}, P^{(2)}, \ldots, P^{(m)}}$, satisfying the following conditions:
\begin{enumerate}
    % \item No integer of $T$ is a zero divisor, \todo{(implied if $T$ is equicharacteristic)}
    \item $\dep(T) \geq 2$,
    \item $\dim(T/P^{(1)}) \geq 3$, $\dim(T/P^{(2)}) \geq 3$, and $\dim(T/P^{(1)}) \neq \dim(T/P^{(2)})$, \label{con: two minimals}
    % \item $\Min(T) = \set{P^{(1)},P^{(2)}}$, both with distinct coheights greater than or equal to 3 \note{(generalize)}, and
    \item $T$ contains a field, and 
    %\todo{(equivalent to equicharacteristic, maybe generalize later)} 
    \label{con: field}
    % \item $\Q \sub T$ \note{(currently using for B4.2, can maybe adjust)},
    % \item $T/M$ is countable \note{(trying to remove this condition)}.
    \item for $1 \leq i \leq 2$ and $1 \leq j \leq m$, $P^{(i)} + P^{(j)}$ is not $M$-primary.
    %\todo{(is this necessary?)} 
    \label{con: nonmaximal}
    % \item There is a prime ideal $X^{(i)}_1$ of $T$ such that $P^{(1)} + P^{(i)} \sub X^{(i)}_1 \subn M$ for all $2 \leq i \leq m$, and \todo{(is this necessary?)} \label{con: nonmaximal}
\end{enumerate}
Then $T$ is the completion of an everywhere 1-noncatenary local UFD $(A, M \inter A)$.
\end{theorem}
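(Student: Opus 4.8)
The plan is to construct $A$ as the union of an ascending chain of $N$-subrings of $T$, all of which meet a fixed finite set $X_0$ of nonmaximal primes of $T$ in $(0)$, arranged so that $\widehat A\cong T$ and so that for every nonzero nonunit $a\in A$ the ring $A$ contains generating sets for two coheight one primes $Q^{(1)}_a, Q^{(2)}_a$ of $T$ with $a\in Q^{(1)}_a\cap Q^{(2)}_a$, with $P^{(1)}$ the only minimal prime of $T$ below $Q^{(1)}_a$, and with $P^{(2)}$ the only minimal prime of $T$ below $Q^{(2)}_a$. Since $A$ will be a UFD, every height one prime of $A$ is $aA$ for a prime element $a$, and for each such $a$ the data just described will satisfy the hypotheses of Lemma~\ref{lem: noncatenary-proving machine}, producing two saturated chains of different lengths from $aA$ to $M\cap A$; this is exactly everywhere 1-noncatenarity.

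Set $X_0:=\bigcup_{i\in\{1,2\}}\bigcup_{j=1}^{m}\Min\bigl(P^{(i)}+P^{(j)}\bigr)$, a finite set of primes, all nonmaximal by Condition~(\ref{con: nonmaximal}). Using Condition~(\ref{con: field}), let $R_0$ be the prime field of $T$; it is a countable $N$-subring with $X_k\cap R_0=(0)$ for all $X_k\in X_0$. I would then run a transfinite recursion that dovetails applications of Lemma~\ref{lem: make A} (to put prescribed elements $t\in T$ into the image of the map to $T/M^2$ and to make $IT\cap S=I$ for finitely generated ideals) with applications of Lemma~\ref{lem: make R} (to install generating sets for batches of coheight one primes), every step an $A_{X_0}$-extension of its predecessor. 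By Lemma~\ref{lem: big union lemma}, the union $A$ satisfies every $N$-subring condition except possibly the cardinality bound, meets every $X_k\in X_0$ in $(0)$, and has elements prime at any stage remaining prime; by Theorem~\ref{thm: completion-proving machine}, $A$ is local with $\widehat A\cong T$; and $A$ is a domain and a UFD, since an irreducible of $A$ is already irreducible, hence prime, in some $N$-subring stage, and $A$ is Noetherian.

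It remains to service a nonzero nonunit $a$ appearing at a stage $S$. Such $a$ is regular in $T$ by Condition~(2) of being an $N$-subring, and since $X_k\cap S=(0)$ for all $X_k\in X_0$ we have $a\notin X$ for every $X\in\Min(P^{(i)}+P^{(j)})$; hence Lemma~\ref{lem: no cross-contamination} guarantees that each minimal prime $\mathfrak q^{(i)}_a$ of $(a)+P^{(i)}$ contains $P^{(i)}$ as its unique minimal prime of $T$. Applying Lemma~\ref{lem: make the Qs} with $I=P^{(i)}$ and $Q_1=\mathfrak q^{(i)}_a$ (legitimate since $\dim(T/P^{(i)})\ge 3$ by Condition~(\ref{con: two minimals})) yields $Q^{(i)}_a\supsetneq\mathfrak q^{(i)}_a$ of coheight one, with $\dep(T_{Q^{(i)}_a})\ge 2$, containing only $P^{(i)}$ among minimal primes of $T$, and not contained in any $P\in\Ass(T)$. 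Lemma~\ref{lem: Q's are safe} then gives $Q^{(i)}_a\nsubseteq P$ for all $P\in\rAss^{(S)}(T)$, and a coheight comparison (if $Q^{(i)}_a\subseteq X$ for some $X\in X_0$ then $Q^{(i)}_a=X$, impossible because $X$ is either a minimal prime or contains two distinct minimal primes) gives $Q^{(i)}_a\nsubseteq X$ for all $X\in X_0$. Thus $Q^{(1)}_a$ and $Q^{(2)}_a$ are legal inputs to Lemma~\ref{lem: make R} at that stage, so a later stage of the recursion contains generating sets for both. Finally, when $a$ is a prime element of $A$ put $J:=aA$ and $P^{(i)}_1:=\mathfrak q^{(i)}_a$: then $P^{(i)}\subsetneq P^{(i)}_1\subsetneq Q^{(i)}_a$ with $\height(P^{(i)}_1/P^{(i)})=1$ (principal ideal theorem), and since $a\in\mathfrak q^{(i)}_a$ and $A\hookrightarrow T$ is faithfully flat, $\mathfrak q^{(i)}_a\cap A$ is a nonzero prime of $A$ of height at most one containing the prime element $a$, so $\mathfrak q^{(i)}_a\cap A=aA=J$. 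All five hypotheses of Lemma~\ref{lem: noncatenary-proving machine} hold for $(P^{(1)},P^{(2)},Q^{(1)}_a,Q^{(2)}_a,P^{(1)}_1,P^{(2)}_1,A,J)$, producing saturated chains from $J$ to $M\cap A$ of lengths $n_1-1$ and $n_2-1$, which differ by Condition~(\ref{con: two minimals}); hence $A/J$ is noncatenary. As $J$ was an arbitrary height one prime of $A$, $A$ is everywhere 1-noncatenary.

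The main obstacle is coordinating the recursion so that $\widehat A\cong T$, the permanence of $X_k\cap S=(0)$, and the availability of generating sets for every $Q^{(i)}_a$ all hold, even though the elements $a$ to be serviced only appear as the construction proceeds; this is what forces the alternation of Lemmas~\ref{lem: make A} and \ref{lem: make R} together with a bookkeeping that services each newly produced element at a later stage, and one must check at each invocation of Lemma~\ref{lem: make R} that its hypothesis ``$Q^{(i)}_a$ is contained in no $P\in X_0\cup\Ass(T)\cup\rAss^{(S)}(T)$'' really holds. The one new structural idea beyond assembling the quoted lemmas is to take $X_0$ to contain \emph{all} minimal primes of every $P^{(i)}+P^{(j)}$, not merely the height one ones: this is precisely what makes Lemma~\ref{lem: no cross-contamination} applicable to an arbitrary nonzero element of $A$, and it is where Condition~(\ref{con: nonmaximal}) is used.
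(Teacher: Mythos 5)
Your proposal is correct and follows essentially the same route as the paper's proof: take $X$ to be the full set $\bigcup_{i,j}\Min(P^{(i)}+P^{(j)})$, start from the prime field, alternate $A_X$-extensions from Lemma~\ref{lem: make A} and Lemma~\ref{lem: make R} (feeding the latter the coheight one primes manufactured via Lemmas~\ref{lem: no cross-contamination}, \ref{lem: make the Qs}, and \ref{lem: Q's are safe}), take the union with Lemma~\ref{lem: big union lemma} and verify $\widehat A\cong T$ with Theorem~\ref{thm: completion-proving machine}, then invoke Lemma~\ref{lem: noncatenary-proving machine} at each height one prime. The only cosmetic differences are that you service every nonzero nonunit rather than just the generators of height one primes at each stage, and that you explicitly supply the check that $Q^{(i)}_a\nsubseteq X$ for $X\in X_0$ (a hypothesis of Lemma~\ref{lem: make R}) via the coheight-one/unique-minimal-prime argument; the paper leaves that verification implicit.
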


\begin{proof}
If $T/M^2$ is infinite, then let $\set{t_\a}_{\a \in \O}$ be a complete set of coset representatives for the elements of $T/M^2$, where $\O$ is a well-ordered index set such that every element of $\O$ has fewer than $|\O|$ predecessors. Let 0 denote the minimal element of $\O$. If $T/M^2$ is finite, then let $\O$ be the set of nonnegative integers, and let $\{t_0, t_1, \ldots ,t_n\}$ be a full set of coset representatives for the elements of $T/M^2$. For $\ell > n$, define $t_{\ell} = t_0$.
%t_{\ell (\mbox{mod} n)} \in \{t_0, t_1, \ldots ,t_n\}$. 
We construct a family of rings $\set{A_\a}_{\a \in \O}$ that satisfy the hypotheses of Lemma \ref{lem: heitmann lemma 6}, the union of which will yield our desired everywhere 1-noncatenary local UFD $A$. %\todo{make extra sure $T/M^2$ finite is fine}

Let $\Pi$ denote the prime subring of $T$, and set $A_0 \coloneqq \Pi_{(M \inter \Pi)}$. Then $A_0$ is a field and a countable $N$-subring of $T$. 
%in one of three forms: $\Q$, $\Z/p\Z$, or $\Z_{(p)}$. Note that condition \ref{con: field} implies that $R_0 = \Q$ or $R_0 = \Z/p\Z$ (the condition means all nonzero integers of $T$ would be units, but if $\Z_{(p)} \sub T$, every integer divisible by $p$ would be a nonunit). 
Define \[X \coloneqq \bigunion_{\substack{1 \leq i \leq 2 \\ 1 \leq j \leq m}} \Min(P^{(i)} + P^{(j)}).\] Then $X_i \inter A_0 = (0)$ for all $X_i \in X$.

%Applying Lemma \ref{lem: make A} to $R_0$ with $t$ being any element of $T$, we obtain an infinite $A_X$-extension $A_0$ of $R_0$ such that $t_0 + M^2$ is in the image of the map $A_0 \to T/M^2$ and for every finitely generated ideal $I$ of $A_0$ we have $IT \inter A_0 = I$.

We now define our family of rings $\set{A_\a}_{\a \in \O}$ recursively. To create $A_\a$ for $\a \in \O$, we assume that $\set{A_\b}_{\b < \a}$ is already defined such that, for $\beta < \alpha$, we have that $A_{\beta}$ is an $N$-subring of $T$ and $X_i \cap A_{\beta} = (0)$ for all $X_i \in X$. We first find $A_\a$ in the case $\g(\a) < \a$. Before constructing $A_{\a}$ in this case, we find an $N$-subring $R_\a$ of $T$ that we use to define $A_{\a}$.
%that contains $\bigunion_{\b < \a} A_\b$, before defining our desired $N$-subring $A_\a$ to be a certain $A_X$-extension of $R_\a$. 

As $A_{\g(\a)}$ is an $N$-subring of $T$, it is necessarily a UFD. Thus, its height one prime ideals are principal. Let $\set{a_k}_{k \in K}$ be a set of generators of all height one prime ideals of $A_{\g(\a)}$ for some index set $K$ with minimal element 0. 
%Because $\abs{A_{\g(\a)}} \leq \sup\set{\aleph_0, \abs{T/M}}$, with equality only when $T/M$ is countable, we have $\abs{K} \leq \sup\set{\aleph_0, \abs{T/M}}$ (with the same equality condition). 
Note that as $X_i \inter A_{\g(\a)} = (0)$ for every $X_i \in X$, $a_k \notin X_i$ for all $X_i \in X$ and for all $k \in K$. Note also that as each $a_k$ is a nonzero element of the $N$-subring $A_{\g(\a)}$, we can use condition 2 of the definition of $N$-subring to see that each $a_k$ is contained in no associated prime ideal of $T$, and in particular in no minimal prime ideal of $T$.

Now, let $P^{(1)}_0 \in \Spec(T)$ be a minimal prime ideal of $(a_0) + P^{(1)}$ and let $P^{(2)}_0 \in \Spec(T)$ be a minimal prime ideal of $(a_0) + P^{(2)}$. By Lemma \ref{lem: no cross-contamination} with $a = a_0$, $I = P^{(1)}$, and $I'$ any other minimal prime ideal of $T$, we find that $P^{(1)}$ is the only minimal prime ideal of $T$ contained in $P^{(1)}_0$. Similarly, $P^{(2)}$ is the only minimal prime ideal of $T$ contained in $P^{(2)}_0$.
By Lemma \ref{lem: make the Qs} with $a = a_0$, $I = P^{(1)}$, and $Q_1 = P^{(1)}_0$, there is a coheight one prime ideal $Q^{(1)}_0$ of $T$ containing $P^{(1)}_0$ such that $\dep(T_{Q^{(1)}_0}) \geq 2$, $Q^{(1)}_0 \nsub P$ for all $P \in \Ass(T)$ and $P^{(1)}$ is the only minimal prime ideal of $T$ contained in $Q^{(1)}_0$. Similarly, a prime ideal $Q^{(2)}_0$ of $T$ exists with analogous properties. Repeat this procedure to find prime ideals $Q^{(1)}_k$ and $Q^{(2)}_k$ of $T$ for each $a_k$, and set $Y_{\g(\a)} \coloneqq \bigunion_{k \in K} \{Q^{(1)}_k, Q^{(2)}_k\}$.

 By construction, $Q^{(1)}_k$ and $Q^{(2)}_k$ are not contained in any element of $\Ass(T)$ for all $k \in K$. By Lemma \ref{lem: Q's are safe}, $Q^{(1)}_k$ and $Q^{(2)}_k$ are not contained in any element of $\rAss(T)$ for every $k \in K$. We can thus apply Lemma \ref{lem: make R} with $R = A_{\g(\a)}$ and $Y = Y_{\g(\a)}$ to find an $A_X$-extension $R_\a$ of $A_{\gamma(\alpha)}$ such that, for all $k \in K$, $R_\a$ contains a generating set for $Q^{(1)}_k$ and $Q^{(2)}_k$. 
 %Moreover, since $\abs{Y_{\g(\a)}} \leq \abs{K} \leq \abs{A_{\g(\a)}}$, $R_\a$ is an $A_X$-extension of $A_{\g(\a)}$.

Apply Lemma \ref{lem: make A} with $R = R_\a$ and $t = t_{\g(\a)}$ to get an $A_X$-extension $A_\a$ of $R_\a$ such that $t_{\g(\a)} + M^2$ is in the image of the map $A_\a \to T/M^2$, and for every finitely generated ideal $I$ of $A_\a$ we have $IT \inter A_\a = I$. As $A_\a$ is an $A_X$-extension of $R_\a$ and $R_\a$ is an $A_X$-extension of $A_{\g(\a)}$, we have that $A_\a$ is an $A_X$-extension of $A_{\g(\a)}$.

In the case where $\g(\a) = \a$, let $A_\a = \bigunion_{\b < \a} A_\b$.
By Lemma \ref{lem: big union lemma}, $A_\a$ is an $N$-subring of $T$ such that prime elements of any $A_\b$ are prime in $A_\a$ and $X_i \inter A_\a = (0)$ for all $X_i \in X$.

With our family of rings defined, let $A \coloneqq \bigunion_{\a \in \O} A_\a$. By Lemma \ref{lem: heitmann lemma 6}, $A$ satisfies all conditions of being an $N$-subring of $T$ other than the cardinality condition. Furthermore, a prime element in any subring of this union is prime in $A$. Note that $A$ is a UFD. We claim that $IT \inter A = I$ for all finitely generated ideals $I$ of $A$. Let $I = (g_1, \ldots, g_t)$ be a finitely generated ideal of $A$ and let $c \in IT \inter A$. Then we can choose $N \in \O$ with $\gamma(N) < N$ such that $\set{c, g_1, \ldots, g_t} \sub A_N$. Now, \[c \in (g_1, \ldots, g_t)T \inter A_N = (g_1, \ldots, g_t)A_N \sub I,\] and so $IT \inter A = I$ as claimed.
Furthermore, by construction, the map $A \to T/M^2$ is onto. 
%To see why, consider $t_\a$ for some fixed $\a \in \O$. Regardless of whether $\g(\a) = \a$ or $\g(\a) < \a$, our method above constructs $R_\a$, and then constructs an $A_X$-extension $A_\a$ of $R_\a$ that ensures $t_\a$ is in the image of $A_\a \to T/M^2$. As $t_\a$ will then still be in the image of the map $A \to T/M^2$, our claim follows. 
We conclude by Proposition \ref{thm: completion-proving machine} that $A$ is Noetherian and $\widehat{A} \iso T$.

Let $J$ be a height one prime ideal of $A$. Since $A$ is a UFD, $J$ is principal, so we can write $J = aA$ for some prime element $a \in A$. 
%As $A$ is a UFD, its only minimal prime ideal is the zero ideal, and so $J$ is not a minimal prime ideal of $A$. 
Choose $N' \in \O$ so that $\gamma(N') < N'$ and $a \in A_{N'}$. We show that $a$ is prime in $A_{N'}$. Suppose $a = p_1 p_2 \cdots p_m$ is the prime factorization of $a$ in $A_{N'}$. Prime elements in $A_{N'}$ are prime in $A$, so $a = p_1 p_2 \cdots p_m$ is also the prime factorization of $a$ in $A$. This implies that $m = 1$, and so $a$ is prime in $A_{N'}$ as desired.

By construction, $T$ contains two coheight one prime ideals $Q^{(1)}_a$ and $Q^{(2)}_a$, where $Q^{(1)}_a$ contains a minimal prime ideal $P^{(1)}_a$ of $(a) + P^{(1)}$ and $Q^{(2)}_a$ contains a minimal prime ideal $P^{(2)}_a$ of $(a) + P^{(2)}$. By the principal ideal theorem, ht$(P^{(1)}_a/P^{(1)}) = 1$ and ht$(P^{(2)}_a/P^{(2)}) = 1$. Furthermore, $P^{(1)}$ is the only minimal prime of $T$ contained in $Q^{(1)}_a$, and $P^{(2)}$ is the only minimal prime of $T$ contained in $Q^{(2)}_a$. Also, by construction, $A_{N'+1}$ (and therefore $A$) contains a generating set for $Q^{(1)}_a$ and $Q^{(2)}_a$. As $P^{(1)}_a$ and $P^{(2)}_a$ are both height one prime ideals of $T$, $\height(P^{(1)}_a \inter A) = 1$ and $\height(P^{(2)}_a \inter A) = 1$. As both contain the prime element $a$ of $A$, we find that $P^{(1)}_a \inter A = aA$ and $P^{(2)}_a \inter A = aA$.

To complete the proof, apply Lemma \ref{lem: noncatenary-proving machine} with $Q^{(i)} = Q^{(i)}_a$ and $P^{(i)}_1 = P^{(i)}_a$ for $i \in \set{1,2}$. This confirms that $A/J$ is noncatenary, and therefore $A$ is everywhere 1-noncatenary.
\end{proof}

We end with an example illustrating that, for $n \geq 4$, there exists an everywhere 1-noncatenary local UFD of dimension $n$.

\begin{example}\label{example}
Let $n \geq 4$ and let $T = K[[x_1, x_2, \ldots ,x_n, x_{n + 1}]]/((x_1) \cap (x_2,x_3))$ where $K$ is a field. Then dim$(T) = n$, and $T$ satisfies the hypotheses of Theorem \ref{thm: main everywhere}. Thus, $T$ is the completion of an everywhere 1-noncatenary local UFD $A$. It follows that dim$(A) = n$.
\end{example}

%\newpage

\begin{bibdiv}
\begin{biblist}

%\bib{SMALL2009}{article}%{
%   author={Arnosti, N.},
%   author={Karpman, R.},
%   author={Leverson, C.},
 %  author={Levinson, J.},
%   author={Loepp, S.},
%   title={Semi-local %formal fibers of minimal %prime ideals of excellent
%   reduced local rings},
%   journal={J. Commut. %Algebra},
 %  volume={4},
 %  date={2012},
%   number={1},
%   pages={29--56},
%   issn={1939-0807},
%   review={\MR{2913526}}%,
%   doi={10.1216/JCA-2012%-4-1-29},
%}

\bib{SMALL_2017}{article}{
   author={Avery, Chloe I.},
   author={Booms, Caitlyn},
   author={Kostolansky, Timothy M.},
   author={Loepp, S.},
   author={Semendinger, Alex},
   title={Characterization of completions of noncatenary local domains and
   noncatenary local UFDs},
   journal={J. Algebra},
   volume={524},
   date={2019},
   pages={1--18},
   issn={0021-8693},
   review={\MR{3902351}},
   doi={10.1016/j.jalgebra.2018.12.016},
}

\bib{Bonat_UFDs}{article}{
      title={Noncatenary Unique Factorization Domains}, 
      author={Alexandra Bonat and S. Loepp},
      year={2024},
      eprint={arXiv:2402.16549},
     % archivePrefix={arXiv},
      %primaryClass={math.AC}
}	

\bib{Colbert_Loepp}{article}{
   author={Colbert, C.},
   author={Loepp, S.},
   title={Every finite poset is isomorphic to a saturated subset of the
   spectrum of a Noetherian UFD},
   journal={J. Algebra},
   volume={643},
   date={2024},
   pages={340--370},
   issn={0021-8693},
   review={\MR{4691762}},
   doi={10.1016/j.jalgebra.2023.11.042},
}

%\bib{GluingPaper}{article}{
%   author={Colbert, Cory H.},
%   author={Loepp, S.},
%   title={Gluing minimal prime ideals in local rings},
%   journal={Comm. Algebra},
%   volume={51},
%   date={2023},
%   number={1},
%   pages={239--247},
%   issn={0092-7872},
%   review={\MR{4525295}},
%   doi={10.1080/00927872.2022.2096226},
%}

\bib{Heitmann_UFDs}{article}{
   author={Heitmann, Raymond C.},
   title={Characterization of completions of unique factorization domains},
   journal={Trans. Amer. Math. Soc.},
   volume={337},
   date={1993},
   number={1},
   pages={379--387},
   issn={0002-9947},
   review={\MR{1102888}},
   doi={10.2307/2154327},
}

\bib{Heitmann_locals}{article}{
   author={Heitmann, Raymond C.},
   title={Completions of local rings with an isolated singularity},
   journal={J. Algebra},
   volume={163},
   date={1994},
   number={2},
   pages={538--567},
   issn={0021-8693},
   review={\MR{1262718}},
   doi={10.1006/jabr.1994.1031},
}

\bib{Heitmann_noncatenary}{article}{
   author={Heitmann, Raymond C.},
   title={Examples of noncatenary rings},
   journal={Trans. Amer. Math. Soc.},
   volume={247},
   date={1979},
   pages={125--136},
   issn={0002-9947},
   review={\MR{517688}},
   doi={10.2307/1998777},
}

%\bib{Hochster}{article}{
%   author={Hochster, M.},
%   title={Prime ideal structure in commutative rings},
%   journal={Trans. Amer. Math. Soc.},
%   volume={142},
%   date={1969},
%   pages={43--60},
%   issn={0002-9947},
%   review={\MR{251026}},
%   doi={10.2307/1995344},
%}

\bib{Jensen}{article}{
   author={Jensen, David},
   title={Completions of UFDs with semi-local formal fibers},
   journal={Comm. Algebra},
   volume={34},
   date={2006},
   number={1},
   pages={347--360},
   issn={0092-7872},
   review={\MR{2194772}},
   doi={10.1080/00927870500346321},
}

%\bib{Lech}{article}{
%   author={Lech, Christer},
%   title={A method for constructing bad Noetherian local rings},
%   conference={
%      title={Algebra, algebraic topology and their interactions},
%      address={Stockholm},
%      date={1983},
%   },
%   book={
%      series={Lecture Notes in Math.},
%      volume={1183},
%      publisher={Springer, Berlin},
%   },
%   isbn={3-540-16453-7},
%   date={1986},
%   pages={241--247},
%   review={\MR{0846452}},
%   doi={10.1007/BFb0075463},
%}

\bib{Lewis}{article}{
   author={Lewis, William J.},
   title={The spectrum of a ring as a partially ordered set},
   journal={J. Algebra},
   volume={25},
   date={1973},
   pages={419--434},
   issn={0021-8693},
   review={\MR{0314811}},
   doi={10.1016/0021-8693(73)90091-4},
}

\bib{Loepp_97}{article}{
   author={Loepp, S.},
   title={Constructing local generic formal fibers},
   journal={J. Algebra},
   volume={187},
   date={1997},
   number={1},
   pages={16--38},
   issn={0021-8693},
   review={\MR{1425557}},
   doi={10.1006/jabr.1997.6768},
}

%\bib{Semendinger}{article}{
%   author={Loepp, Susan},
%   author={Semendinger, Alex},
%   title={Maximal chains of prime ideals of different lengths in unique
%   factorization domains},
%   journal={Rocky Mountain J. Math.},
%   volume={49},
%   date={2019},
%   number={3},
%   pages={849--865},
%   issn={0035-7596},
%   review={\MR{3983303}},
%   doi={10.1216/RMJ-2019-49-3-849},
%}

\bib{Loepp_Simpson}{article}{
   author={Loepp, S.},
   author={Simpson, Austyn},
   title={Noncatenary splinters in prime characteristic},
   journal={J. Algebra},
   volume={677},
   date={2025},
   pages={61--87},
   issn={0021-8693},
   review={\MR{4889183}},
   doi={10.1016/j.jalgebra.2025.03.029},
}

%\bib{TeresaYu}{article}{
%   author={Loepp, S.},
%   author={Yu, Teresa},
%   title={Completions of countable excellent domains and countable
%   noncatenary domains},
%   journal={J. Algebra},
%   volume={567},
%   date={2021},
%   pages={210--228},
%   issn={0021-8693},
%   review={\MR{4158729}},
%   doi={10.1016/j.jalgebra.2020.09.021},
%}

%\bib{matsumuracommalg}{book}{
%   author={Matsumura, Hideyuki},
%   title={Commutative algebra},
%   series={},
%   volume={56.},
%   edition={2},
%   publisher={Benjamin/Cummings Publishing Co., Inc., Reading, Mass.},
%   date={1980},
%   pages={xv+313},
%   isbn={0-8053-7026-9},
%   review={\MR{0575344}},
%}

\bib{Nagata}{article}{
   author={Nagata, Masayoshi},
   title={On the chain problem of prime ideals},
   journal={Nagoya Math. J.},
   volume={10},
   date={1956},
   pages={51--64},
   issn={0027-7630},
   review={\MR{78974}},
}

\bib{Ogoma}{article}{
   author={Ogoma, Tetsushi},
   title={Noncatenary pseudogeometric normal rings},
   journal={Japan. J. Math. (N.S.)},
   volume={6},
   date={1980},
   number={1},
   pages={147--163},
   issn={0289-2316},
   review={\MR{615018}},
   doi={10.4099/math1924.6.147},
}

%\bib{Rotthaus}{article}{
%   author={Rotthaus, Christel},
%   title={Excellent rings, Henselian rings, and the approximation property},
%   journal={Rocky Mountain J. Math.},
%   volume={27},
%   date={1997},
%   number={1},
%   pages={317--334},
%   issn={0035-7596},
%   review={\MR{1453106}},
%   doi={10.1216/rmjm/1181071964},
%}

%\bib{RotthausGerman}{article}{
%   author={Rotthaus, Christel},
%   title={Komplettierung semilokaler quasiausgezeichneter Ringe},
%   language={German},
%   journal={Nagoya Math. J.},
%   volume={76},
%   date={1979},
%   pages={173--180},
%   issn={0027-7630},
%   review={\MR{550860}},
%}

%\bib{Trotter}{article}{
%    author={Trotter, W.},
%    author={Graham (ed.), R. L.},
%    author={Gr\"otschel (ed.), M.},
%    author={Lov\'asz (ed.), L.},
%    title={Partially Ordered Sets},
%    journal={Handbook of Combinatorics},
%    pages={433-480},
%    date={1995},
%    publisher={North Holland},
%}

%\bib{WiegandEXPO}{book}{
%     AUTHOR = {Wiegand, R.},
%     AUTHOR = {Wiegand, S.},
%     TITLE = {Prime ideals in {N}oetherian rings: a survey},
% BOOKTITLE = {Ring and module theory},
%    SERIES = {Trends Math.},
%     PAGES = {175--193},
% PUBLISHER = {Birkh\"{a}user/Springer Basel AG, Basel},
%      YEAR = {2010},
   %MRCLASS = {13E05 (13F20 13F25 13J15)},
  %MRNUMBER = {2744272},
%MRREVIEWER = {Marco Fontana},
       %DOI = {10.1007/978-3-0346-0007-1\_13},
       %URL = {https://doi.org/10.1007/978-3-0346-0007-1_13},
%    }

\end{biblist}
\end{bibdiv}

\end{document}